\documentclass[11pt]{article}

\usepackage[a4paper,margin=1in]{geometry}
\usepackage{amsmath,amssymb,amsfonts,amsthm}
\usepackage{mathtools}
\usepackage{mathrsfs}
\usepackage{enumitem}
\usepackage{hyperref}
\usepackage{tikz-cd}
\usepackage[all]{xy}

\hypersetup{
    colorlinks=true,
    linkcolor=blue,
    citecolor=blue,
    urlcolor=blue
}

\newtheorem{theorem}{Theorem}[section]
\newtheorem{proposition}[theorem]{Proposition}
\newtheorem{lemma}[theorem]{Lemma}
\newtheorem{corollary}[theorem]{Corollary}

\theoremstyle{definition}

\newtheorem{example}[theorem]{Example}

\theoremstyle{remark}

\title{Koszul Duality for Coherent Sheaves}

\author{
A. M. Bouhada
}

\date{}

\begin{document}

\maketitle
\begin{center}
\emph{Dedicated to J. P. Serre on the occasion of his 100th birthday.}
\end{center}
\begin{abstract}
We establish a bounded derived Koszul duality for infinite-dimensional
Koszul algebras and derive the corresponding singular Koszul duality.
We then specialize this framework to two classes of Koszul algebras,
namely quadratic monomial algebras and absolutely Koszul algebras
satisfying an additional homological condition, for which the resulting
dualities admit particularly well-behaved forms.

As an application to algebraic geometry, let \(\Lambda\) be a commutative
noetherian Koszul algebra generated in degree \(1\), and set
\(X=\operatorname{Proj}(\Lambda)\). We obtain a Koszul-dual description of
\(\mathsf{D}^{b}\!\bigl(\operatorname{coh}(X)\bigr)\), yielding a BGG-type
correspondence for projective schemes defined by such algebras.

As a second application, in noncommutative projective geometry, we
consider generalized Artin--Schelter regular Koszul algebras
\(\Lambda^{!}\) arising as Koszul duals of finite-dimensional
self-injective Koszul algebras \(\Lambda\). We show that
\(\mathsf{D}^{b}\!\bigl(\operatorname{qgr}(\Lambda^{!})\bigr)\) is
triangulated equivalent to the bounded derived category of
finite-dimensional modules over a finite-dimensional Koszul algebra of
finite global dimension. This yields a Beilinson-type description of
\(\mathsf{D}^{b}\!\bigl(\operatorname{qgr}(\Lambda^{!})\bigr)\), extending
the classical description of coherent sheaves on projective space to
this noncommutative setting.
\end{abstract}
% --------------------------------------------------
% MSC AND KEYWORDS
% --------------------------------------------------
\noindent \textbf{2020 Mathematics Subject Classification.}
16E30, 18E30, 16G20, 16S37, 14F08.

\noindent \textbf{Keywords.}
Koszul duality, bounded derived categories, singularity categories, BGG correspondence,
tails category, coherent sheaves, noncommutative projective schemes, Koszul algebras, quadratic
monomial algebra, absolutely Koszul algebra.
\tableofcontents
\section{Introduction}
In 1978, Bernstein, Gel'fand, and Gel'fand discovered a remarkable
homological correspondence between two algebras of quite different
nature: the exterior algebra and the symmetric algebra~\cite{8}. The
former is finite-dimensional and self-injective, whereas the latter is a
polynomial algebra, hence infinite-dimensional and of finite global
dimension. Their correspondence provides a homological description of
coherent sheaves on projective space. More precisely, if
\(E=\bigwedge V\) is the exterior algebra on a finite-dimensional vector
space \(V\), then the bounded derived category
\(\mathsf{D}^{b}\!\bigl(\operatorname{Coh}(\mathbb{P}^{n})\bigr)\)
is triangulated equivalent to the stable category of finite-dimensional
graded \(E\)-modules. This is the classical
Bernstein--Gel'fand--Gel'fand correspondence, or BGG correspondence. It
has become a fundamental tool in algebraic geometry and commutative
algebra, with applications to syzygies, free resolutions, Chow forms,
Betti tables, and the cohomology of vector bundles; see, for
example,~\cite{21,24,25}. More recently, a BGG-type correspondence was
obtained for finite-dimensional Iwanaga--Gorenstein Koszul algebras; see
\cite[Theorem~3.17]{12}.

A different description of
\(\mathsf{D}^{b}\!\bigl(\operatorname{Coh}(\mathbb{P}^{n})\bigr)\)
is due to Beilinson~\cite{4}. His theorem realizes this category as the
bounded derived category of finite-dimensional modules over a
finite-dimensional Koszul algebra of finite global dimension. Thus the
BGG and Beilinson descriptions provide two complementary homological
realizations of the derived category of coherent sheaves on projective
space.

The BGG correspondence was subsequently extended beyond the classical
pair consisting of the symmetric and exterior algebras. Important steps
in this direction were taken by Beilinson, Ginzburg, and
Schechtman~\cite{5}, and a general formulation was later developed by
Beilinson, Ginzburg, and Soergel~\cite{6}. This led to the modern theory
of Koszul duality. Given a Koszul algebra \(\Lambda\), with Koszul dual
\(\Lambda^{!}\), the Beilinson--Ginzburg--Soergel theory relates suitable
graded derived categories of \(\Lambda\) and \(\Lambda^{!}\). In the form
relevant here, one has a triangulated equivalence
\[
\mathsf{D}^{\downarrow}\!\bigl(\Lambda^{!}\textup{-}\mathrm{GMod}\bigr)
\simeq
\mathsf{D}^{\uparrow}\!\bigl(\Lambda\textup{-}\mathrm{GMod}\bigr).
\]
Koszul duality has since become a central tool in representation theory,
homological algebra, algebraic geometry, and noncommutative geometry.
One of its principal features is that it relates algebras with markedly
different algebraic and homological properties. For instance, the
Koszul dual of a commutative Koszul algebra is generally
noncommutative, while the Koszul dual of a finite-dimensional Koszul
algebra is frequently infinite-dimensional. In this way, Koszul duality
provides a natural mechanism for transferring homological information
between finite-dimensional and infinite-dimensional settings.

In the finite-dimensional setting, bounded forms of Koszul duality are
well understood; see, for example,~\cite[Theorem~3.8]{12}. For
infinite-dimensional Koszul algebras, however, the bounded theory is
considerably more delicate. In particular, without assumptions such as
finite-dimensionality, noetherianity, or coherence on \(\Lambda\) or
\(\Lambda^{!}\), the classical Koszul duality does not directly provide
a bounded derived equivalence between the natural categories of
finitely generated or finitely presented modules.

The main purpose of this paper is to develop a bounded derived Koszul
duality that applies to infinite-dimensional Koszul algebras without
imposing such finiteness assumptions on the algebra or its Koszul dual.
The basic objects on the two sides are the categories of almost linear
and almost colinear graded modules. We prove that the classical Koszul
duality functors restrict to mutually inverse triangulated equivalences
between their bounded derived categories. This also yields the
corresponding singular Koszul duality. We then specialize the general
construction to two classes for which the resulting equivalences admit
particularly concrete descriptions: quadratic monomial algebras and
absolutely Koszul algebras satisfying an additional homological
condition.

A principal motivation for this bounded theory comes from algebraic
geometry. Many naturally occurring infinite-dimensional Koszul algebras
are homogeneous coordinate rings of projective varieties. Examples
include graded quotients of polynomial algebras arising from
Grassmannians, Schubert varieties, and more general projective
homogeneous spaces. Such algebras occur naturally in the study of
syzygies, Castelnuovo--Mumford regularity, and derived categories of
coherent sheaves.

Let \(X=\operatorname{Proj}(\Lambda)\), where \(\Lambda\) is a
commutative noetherian graded algebra generated in degree \(1\). By
Serre's theorem~\cite{48}, there is an equivalence
\(\operatorname{coh}(X)\simeq\operatorname{qgr}(\Lambda)\), where
\(\operatorname{qgr}(\Lambda)
=\Lambda\textup{-}\mathrm{Fp}^{\mathbb Z}/
\Lambda\textup{-}\mathrm{gmod}\). Consequently,
\(\mathsf{D}^{b}\!\bigl(\operatorname{coh}(X)\bigr)
\simeq
\mathsf{D}^{b}\!\bigl(\operatorname{qgr}(\Lambda)\bigr)\).
For Koszul quotients of polynomial algebras, our bounded Koszul duality
therefore yields a Koszul-dual description of
\(\mathsf{D}^{b}\!\bigl(\operatorname{coh}(X)\bigr)\). This may be viewed
as a BGG-type correspondence for projective schemes defined by Koszul
quotients of polynomial algebras. Since the Koszul dual is typically a
quotient of a free associative algebra, the resulting equivalence
relates a geometric category of coherent sheaves to a triangulated
category naturally associated with a noncommutative algebra.

There is a parallel application in noncommutative projective geometry.
Artin and Zhang~\cite{2} introduced \(\operatorname{qgr}(\Lambda)\) as
the noncommutative analogue of the category of coherent sheaves on a
projective scheme. We consider generalized Artin--Schelter regular
Koszul algebras \(\Lambda^{!}\) arising as Koszul duals of
finite-dimensional self-injective Koszul algebras \(\Lambda\). In this
setting, we obtain a description of
\(\mathsf{D}^{b}\!\bigl(\operatorname{qgr}(\Lambda^{!})\bigr)\)
as the bounded derived category of finite-dimensional modules over a
finite-dimensional Koszul algebra \(B\) of finite global dimension. In
particular, this gives a Beilinson-type description of
\(\mathsf{D}^{b}\!\bigl(\operatorname{qgr}(\Lambda^{!})\bigr)\), extending
the classical picture for projective space to a noncommutative setting.

\medskip

\noindent\textbf{Organization of the paper.}
Section~2 fixes the notation and recalls the necessary background on
graded modules, Koszul algebras, triangulated categories, and the
classical Koszul duality functors. In Section~3, we establish the bounded
derived Koszul duality for infinite-dimensional Koszul algebras and
derive the corresponding singular duality. We then specialize these
results to quadratic monomial algebras and to absolutely Koszul algebras
satisfying an additional homological condition. Section~4 is devoted to
geometric applications. We first obtain a BGG-type description of the
bounded derived category of coherent sheaves on projective schemes
defined by Koszul quotients of polynomial algebras. We then treat
generalized AS-regular Koszul algebras arising as Koszul duals of
finite-dimensional self-injective Koszul algebras and obtain the
corresponding Beilinson-type description in noncommutative projective
geometry.
\section{Preliminaries and Notation}

Throughout the paper, \(k\) denotes a field and \(\Lambda\) a graded
\(k\)-algebra of the form
\[
\Lambda=kQ/I,
\]
where \(Q\) is a finite quiver and \(I\) is a homogeneous ideal of
\(kQ\). Unless explicitly stated otherwise, \(I\) is not assumed to be
admissible. The grading on \(\Lambda\) is induced by the path-length
grading on \(kQ\), so that
\[
\Lambda=\bigoplus_{i\geq 0}\Lambda_i.
\]

Following the standard convention in Koszul duality, we regard
\(\Lambda\) as a graded \(k\)-linear category whose objects are the
vertices of \(Q\) and whose morphism spaces are
\[
\Lambda(x,y)=e_y\Lambda e_x
=\bigoplus_{i\geq 0} e_y\Lambda_i e_x,
\]
with composition induced by multiplication in \(\Lambda\). For
simplicity, we assume that \(\Lambda\) is locally finite-dimensional,
that is,
\[
\dim_k e_y\Lambda_i e_x<\infty
\qquad
\text{for all } x,y\in Q_0 \text{ and } i\geq 0.
\]

Unless otherwise specified, all modules are left modules and are
regarded as covariant graded \(k\)-linear functors. When \(\Lambda\) is
Koszul, its Koszul dual is denoted by \(\Lambda^{!}\).

The definitions and results below extend, with the evident minor
modifications, to locally finite quivers and, more generally, to the
graded algebras considered in \cite{6}. We restrict to finite quivers
in order to streamline the exposition and to focus on the classes of
algebras most relevant to the applications developed in this paper,
notably quotients of polynomial algebras and quotients of free
associative algebras arising in geometric contexts.

\subsection{Graded Modules}

We begin by fixing the notation and finiteness conditions for graded modules that will be used throughout the paper. We regard \(\Lambda\) as
a graded \(k\)-category whose objects are the vertices of the quiver \(Q\), so
that graded modules may be viewed equivalently as graded \(k\)-linear functors.
Thus, a graded left \(\Lambda\)-module is a covariant graded \(k\)-linear
functor
\[
M\colon \Lambda \longrightarrow \mathsf{GrVect}_{k},
\]
where \(\mathsf{GrVect}_{k}\) denotes the category of \(\mathbb{Z}\)-graded
\(k\)-vector spaces.

For a vertex \(x\in Q_0\), we write \(M(x)=e_xM\). Then \(M(x)\) is a graded
\(k\)-vector space, and we write
\[
M(x)=\bigoplus_{i\in\mathbb{Z}} M(x)_i,
\qquad
M(x)_i=e_xM_i .
\]

We denote by \(\Lambda\textup{-GMod}\) the category of locally
finite-dimensional graded left \(\Lambda\)-modules, with morphisms the
degree-preserving \(\Lambda\)-linear maps. Thus, for every object
\(M\in\Lambda\textup{-GMod}\),
\[
\dim_k M(x)_i<\infty
\qquad
\text{for all }x\in Q_0\text{ and all }i\in\mathbb{Z}.
\]

The category \(\Lambda\textup{-GMod}\) is equipped with the grading shift
functor
\[
\langle 1\rangle\colon
\Lambda\textup{-GMod}\longrightarrow\Lambda\textup{-GMod}.
\]
For \(i\in\mathbb{Z}\), the shifted module \(M\langle i\rangle\) is defined by
\[
(M\langle i\rangle)_n=M_{n+i}
\qquad
(n\in\mathbb{Z}).
\]
Equivalently,
\[
(M\langle i\rangle)(x)_n=M(x)_{n+i}
\]
for every \(x\in Q_0\).

A graded module \(M\) is said to be right bounded if there exists an integer
\(N\) such that \(M(x)_i=0\) for all \(x\in Q_0\) and all \(i>N\). Dually,
\(M\) is said to be left bounded if there exists an integer \(N\) such that
\(M(x)_i=0\) for all \(x\in Q_0\) and all \(i<N\). We denote by \(\Lambda\textup{-GMod}^{-}\) and
\(\Lambda\textup{-GMod}^{+}\) the full subcategories of
\(\Lambda\textup{-GMod}\) consisting respectively of right bounded and left
bounded graded modules.

A graded module is said to be finite-dimensional if it is both left bounded and
right bounded. We denote by \(\Lambda\textup{-gmod}\) the full subcategory of
\(\Lambda\textup{-GMod}\) consisting of finite-dimensional graded modules.

For \(x\in Q_0\), let \(P_x=\Lambda e_x\) be the indecomposable projective left
\(\Lambda\)-module associated with \(x\). We denote by
\(\Lambda\textup{-Proj}^{\mathbb Z}\) the additive full subcategory of
\(\Lambda\textup{-GMod}^{+}\) generated by the objects
\(P_x\langle n\rangle\), with \(x\in Q_0\) and \(n\in\mathbb Z\).

Dually, let \(I_x=D(e_x\Lambda)\), where \(D=\operatorname{Hom}_k(-,k)\), be
the indecomposable injective left \(\Lambda\)-module associated with \(x\). We
denote by \(\Lambda\textup{-Inj}^{\mathbb Z}\) the additive full subcategory of
\(\Lambda\textup{-GMod}^{-}\) generated by the objects
\(I_x\langle n\rangle\), with \(x\in Q_0\) and \(n\in\mathbb Z\).

If \(\Lambda\) is finite-dimensional, we write
\(\Lambda\textup{-proj}^{\mathbb Z}\) and
\(\Lambda\textup{-inj}^{\mathbb Z}\) for the full subcategories of
finite-dimensional graded projective modules and finite-dimensional graded
injective modules, respectively.

A graded \(\Lambda\)-module \(M\) is said to be finitely generated if there
exists an epimorphism
\[
P^0\longrightarrow M\longrightarrow 0,
\]
where \(P^0\) is a finite direct sum of graded shifts of indecomposable
projective modules. Equivalently,
\[
P^0\cong
\bigoplus_{\ell=1}^r P_{x_\ell}\langle n_\ell\rangle
\]
for some \(x_\ell\in Q_0\) and \(n_\ell\in\mathbb Z\). We denote by
\(\Lambda\textup{-Fg}^{\mathbb Z}\) the full subcategory of
\(\Lambda\textup{-GMod}^{+}\) consisting of finitely generated graded modules.

Dually, a graded \(\Lambda\)-module \(M\) is said to be finitely cogenerated if
there exists a monomorphism
\[
0\longrightarrow M\longrightarrow I^0,
\]
where \(I^0\) is a finite direct sum of graded shifts of indecomposable
injective modules. Equivalently,
\[
I^0\cong
\bigoplus_{\ell=1}^s I_{x_\ell}\langle n_\ell\rangle
\]
for some \(x_\ell\in Q_0\) and \(n_\ell\in\mathbb Z\). We denote by
\(\Lambda\textup{-Fcg}^{\mathbb Z}\) the full subcategory of
\(\Lambda\textup{-GMod}^{-}\) consisting of finitely cogenerated graded modules.

A graded \(\Lambda\)-module \(M\) is said to be finitely presented if there is
an exact sequence
\[
P^{-1}\longrightarrow P^0\longrightarrow M\longrightarrow 0,
\]
where both \(P^{-1}\) and \(P^0\) are finite direct sums of graded shifts of
indecomposable projective modules. We denote by
\(\Lambda\textup{-Fp}^{\mathbb Z}\) the full subcategory of
\(\Lambda\textup{-GMod}^{+}\) consisting of finitely presented graded modules.

Dually, a graded \(\Lambda\)-module \(M\) is said to be finitely copresented if
there is an exact sequence
\[
0\longrightarrow M\longrightarrow I^0\longrightarrow I^1,
\]
where both \(I^0\) and \(I^1\) are finite direct sums of graded shifts of
indecomposable injective modules. We denote by
\(\Lambda\textup{-Fcp}^{\mathbb Z}\) the full subcategory of
\(\Lambda\textup{-GMod}^{-}\) consisting of finitely copresented graded modules.

A graded \(\Lambda\)-module \(M\) is said to be perfect if it admits a finite
projective resolution by objects of \(\Lambda\textup{-Proj}^{\mathbb Z}\).
Explicitly, there is an exact sequence
\[
0\longrightarrow P^{-m}
\longrightarrow P^{-m+1}
\longrightarrow\cdots
\longrightarrow P^{-1}
\longrightarrow P^0
\longrightarrow M
\longrightarrow 0,
\]
such that each \(P^j\) is a finite direct sum of graded shifts of
indecomposable projective modules. We denote by
\(\Lambda\textup{-Pe}^{\mathbb Z}\) the full subcategory of
\(\Lambda\textup{-GMod}^{+}\) consisting of perfect graded modules.

Dually, a graded \(\Lambda\)-module \(M\) is said to be coperfect if it admits
a finite injective resolution by objects of \(\Lambda\textup{-Inj}^{\mathbb Z}\).
Explicitly, there is an exact sequence
\[
0\longrightarrow M
\longrightarrow I^0
\longrightarrow I^1
\longrightarrow\cdots
\longrightarrow I^{m}
\longrightarrow 0,
\]
such that each \(I^j\) is a finite direct sum of graded shifts of
indecomposable injective modules. We denote by
\(\Lambda\textup{-Cop}^{\mathbb Z}\) the full subcategory of
\(\Lambda\textup{-GMod}^{-}\) consisting of coperfect graded modules.
\subsection{Graded Modules over Koszul Algebras}
We next recall the definitions of Koszul algebras and their Koszul duals, and introduce the classes of graded modules that will play a central role in the subsequent development. These include Koszul and co-Koszul modules, linear and colinear modules, almost linear and almost colinear modules, and weakly Koszul and weakly co-Koszul modules.
\medskip

Recall that a complex of graded projective \(\Lambda\)-modules
\[
\cdots \longrightarrow P^{n-1}
\longrightarrow P^{n}
\longrightarrow P^{n+1}
\longrightarrow \cdots
\]
is said to be linear if, for every \(n\in\mathbb{Z}\), the term \(P^{n}\) is
generated in degree \(-n\). Equivalently, each \(P^{n}\) is a finite direct sum of
graded shifts of indecomposable projectives of the form
\[
P^{n}\cong
\bigoplus_{x\in Q_{0}} P_x\langle n\rangle^{\alpha_{x,n}},
\]
where all but finitely many of the multiplicities \(\alpha_{x,n}\) are zero.

The algebra \(\Lambda\) is called Koszul if every simple graded
\(\Lambda\)-module \(S_x\), regarded as concentrated in degree zero, admits a
linear projective resolution.

Dually, a complex of graded injective \(\Lambda\)-modules
\[
\cdots \longrightarrow I^{n-1}
\longrightarrow I^{n}
\longrightarrow I^{n+1}
\longrightarrow \cdots
\]
is said to be colinear if, for every \(n\in\mathbb{Z}\), the term \(I^{n}\) is
cogenerated in degree \(-n\). Equivalently, each \(I^{n}\) is a finite direct sum
of graded shifts of indecomposable injectives of the form
\[
I^{n}\cong
\bigoplus_{x\in Q_{0}} I_x\langle n\rangle^{\beta_{x,n}},
\]
where all but finitely many of the multiplicities \(\beta_{x,n}\) are zero.

We say that \(\Lambda\) is coKoszul if every simple graded
\(\Lambda\)-module \(S_x\), regarded as concentrated in degree zero, admits a
colinear injective resolution.

We now recall the construction of the Koszul dual algebra. Let
\(\Lambda=kQ/I\) be a Koszul algebra, and let
\[
V=(kQ)_2
\]
be the \(k\)-vector space spanned by the paths of length \(2\) in \(Q\). Set
\[
I_2=I\cap V .
\]
Let \(Q^{\mathrm{op}}\) be the opposite quiver, and put
\[
V^{\mathrm{op}}=(kQ^{\mathrm{op}})_2 .
\]
After choosing the basis of \(V\) given by the paths of length \(2\), we identify
\(V^{\mathrm{op}}\) with the dual vector space of \(V\) by means of the
corresponding dual basis. Equivalently, this gives a nondegenerate bilinear
pairing
\[
\langle -,- \rangle \colon V\times V^{\mathrm{op}}\longrightarrow k .
\]
Let
\[
I_2^{\perp}
=
\{\,u\in V^{\mathrm{op}}\mid \langle v,u\rangle=0
\text{ for all }v\in I_2\,\}
\]
be the orthogonal complement of \(I_2\) with respect to this pairing. The
Koszul dual of \(\Lambda\) is the quadratic algebra
\[
\Lambda^{!}
=
kQ^{\mathrm{op}}\big/\langle I_2^{\perp}\rangle .
\]

For further background on Koszul algebras and their Koszul duals, we refer the
reader to~\cite{6}.

\medskip 

Suppose now that \(\Lambda\) is Koszul, and let \(M\) be a graded
\(\Lambda\)-module. We say that \(M\) is Koszul if it is generated in degree zero
and admits a linear projective resolution. Dually, \(M\) is said to be coKoszul
if it is cogenerated in degree zero and admits a colinear injective
resolution.

We denote by
\( 
\Lambda\textup{-Ko}^{\mathbb{Z}}
\)
the full subcategory of \(\Lambda\textup{-GMod}^{+}\) consisting of Koszul
modules. Dually, we denote by
\( 
\Lambda\textup{-CKo}^{\mathbb{Z}}
\)
the full subcategory of \(\Lambda\textup{-GMod}^{-}\) consisting of coKoszul
modules.

These notions admit the following homological characterizations. A graded
\(\Lambda\)-module \(M\) is Koszul if and only if
\[
\operatorname{Ext}^{n}_{\Lambda\textup{-GMod}}
\bigl(M,S_x\langle -i\rangle\bigr)=0
\qquad
\text{for all }x\in Q_0\text{ and all }i\neq n .
\]
Dually, a graded \(\Lambda\)-module \(M\) is coKoszul if and only if
\[
\operatorname{Ext}^{n}_{\Lambda\textup{-GMod}}
\bigl(S_x\langle i\rangle,M\bigr)=0
\qquad
\text{for all }x\in Q_0\text{ and all }i\neq n .
\]

More generally, a graded module \(M\) is said to be \emph{linear} if there exists
an integer \(i\) such that \(M\langle i\rangle\) is Koszul. Dually, \(M\) is
said to be \emph{colinear} if there exists an integer \(i\) such that
\(M\langle i\rangle\) is coKoszul.

Let \(M\) be a graded \(\Lambda\)-module and let \(r\in\mathbb{Z}\). We define
the left truncation \(M_{\geq r}\) by
\[
(M_{\geq r})_i =
\begin{cases}
M_i, & i\geq r,\\
0, & i<r.
\end{cases}
\]
Equivalently, for each vertex \(x\in Q_0\),
\[
M_{\geq r}(x)_i =
\begin{cases}
M(x)_i, & i\geq r,\\
0, & i<r.
\end{cases}
\]
Since \(\Lambda\) is positively graded, \(M_{\geq r}\) is a graded submodule of
\(M\). We shall call \(M_{\geq r}\) the truncation of \(M\) in degrees at least
\(r\).

Dually, we define the right truncation \(M_{\leq r}\) by
\[
(M_{\leq r})_i =
\begin{cases}
M_i, & i\leq r,\\
0, & i>r.
\end{cases}
\]
Equivalently, for each vertex \(x\in Q_0\),
\[
M_{\leq r}(x)_i =
\begin{cases}
M(x)_i, & i\leq r,\\
0, & i>r.
\end{cases}
\]
In general, \(M_{\leq r}\) is not a submodule of \(M\). Rather, it is the graded
quotient
\[
M_{\leq r}=M/M_{\geq r+1}.
\]
Thus, for every \(r\in\mathbb{Z}\), one has a canonical short exact sequence
\[
0\longrightarrow M_{> r}
\longrightarrow M
\longrightarrow M_{\leq r}
\longrightarrow 0 .
\]

A graded module \(M\) is said to be \emph{almost linear} if it admits a linear
truncation, that is, if there exists an integer \(r\) such that
\(M_{\geq r}\langle r\rangle\) is Koszul. Equivalently, \(M_{\geq r}\) is
generated in degree \(r\) and admits a linear projective resolution.

Dually, \(M\) is said to be \emph{almost colinear} if it admits a colinear
truncation, that is, if there exists an integer \(r\) such that
\(M_{\leq r}\langle r\rangle\) is coKoszul. Equivalently, \(M_{\leq r}\) is
cogenerated in degree \(r\) and admits a colinear injective resolution.

We denote by
\( 
\Lambda\textup{-Al}^{\mathbb{Z}}
\)
the full subcategory of \(\Lambda\textup{-GMod}^{+}\) consisting of almost
linear graded modules, and by
\( 
\Lambda\textup{-Acl}^{\mathbb{Z}}
\)
the full subcategory of \(\Lambda\textup{-GMod}^{-}\) consisting of almost
colinear graded modules. These modules are usually referred to as modules with linear truncation and
modules with colinear truncation.
Eisenbud and Goto~\cite{22} proved that every finitely generated graded module
over a polynomial algebra admits a linear truncation. This result was later
extended by Avramov and Eisenbud~\cite{3}, who showed that every finitely
generated graded module over a commutative Noetherian Koszul algebra admits a
linear truncation. In the noncommutative setting, J{\o}rgensen~\cite{29}
obtained an analogous result for noncommutative Noetherian Koszul algebras
admitting a balanced dualizing complex.

In Section~3, we prove that every finitely presented graded module over a
quadratic monomial algebra is almost linear, and that every finitely copresented
graded module is almost colinear. As a consequence, we obtain a well-behaved
bounded Koszul duality for quadratic monomial algebras. 

\medskip 

We next recall the construction of the linear strand of a minimal graded projective resolution, together with the notions of weakly Koszul modules and absolutely Koszul algebras. We also recall the dual notions of the colinear strand of a minimal graded injective resolution, weakly coKoszul modules, and absolutely coKoszul algebras.

Let \(\Lambda\) be a graded algebra, and let \(M \in \Lambda\textup{-}\mathrm{GMod}^{+}\) be a left bounded graded module. Assume that \(M\) admits a minimal graded projective resolution whose terms are finitely generated: \[ \mathbf{P}^{\bullet}\colon \cdots \longrightarrow P^{-n} \xrightarrow{\,d^{-n}\,} P^{-n+1} \longrightarrow \cdots \longrightarrow P^{-1} \xrightarrow{\,d^{-1}\,} P^{0} \longrightarrow M \longrightarrow 0. \] Since \(\Lambda\) is graded, each differential \(d^{i}\) decomposes uniquely as \[ d^{i}=\sum_{j\geq 1} d^{i}_{j}, \] where \(d^{i}_{j}\) is homogeneous of internal degree \(j\). The \emph{linear strand} of \(\mathbf{P}^{\bullet}\), denoted by \(\operatorname{lin}(\mathbf{P}^{\bullet})\), is the complex obtained by retaining only the degree-one components of the differentials. Explicitly, \[ \operatorname{lin}(\mathbf{P}^{\bullet})\colon \cdots \longrightarrow P^{-n} \xrightarrow{\,d^{-n}_{1}\,} P^{-n+1} \longrightarrow \cdots \longrightarrow P^{-1} \xrightarrow{\,d^{-1}_{1}\,} P^{0}, \] where \(d^{i}_{\operatorname{lin}}:=d^{i}_{1}\).

Since \(\Lambda\) is presented by a quiver with homogeneous relations, each component \(d^{i}_{j}\) is induced by multiplication by linear combinations of paths of length \(j\). In particular, the differential of the linear strand is induced by multiplication by arrows of the quiver.

A graded module \(M\) is called \emph{weakly Koszul} if
\(\operatorname{lin}(\mathbf{P}^{\bullet})\) is exact.

Dually, let \(N\in \Lambda\textup{-}\mathrm{GMod}^{-}\) be a right bounded graded module admitting a minimal graded coinjective resolution whose terms are finitely cogenerated: \[ 0\longrightarrow N \longrightarrow I^{0} \xrightarrow{\delta^{0}} I^{1} \longrightarrow \cdots \longrightarrow I^{n} \xrightarrow{\delta^{n}} I^{n+1} \longrightarrow\cdots . \] Since \(\Lambda\) is graded, each differential \(\delta^{i}\) decomposes uniquely as \[ \delta^{i} = \sum_{j\geq 1}\delta^{i}_{j}, \] where \(\delta^{i}_{j}\) is homogeneous of internal degree \(j\).

The \emph{colinear strand} of \(\mathbf{I}^{\bullet}\), denoted by
\(\operatorname{colin}(\mathbf{I}^{\bullet})\), is obtained by retaining only the degree-one components of the differentials.

A graded module \(N\) is called \emph{weakly coKoszul} if \(\operatorname{colin}(\mathbf{I}^{\bullet})\) is exact. Following~\cite{28}, a Koszul algebra \(\Lambda\) is said to be \emph{absolutely Koszul} if every finitely generated graded \(\Lambda\)-module admits a syzygy which is weakly Koszul. Dually, a Koszul algebra \(\Lambda\) is said to be \emph{absolutely coKoszul} if every finitely cogenerated graded \(\Lambda\)-module admits a cosyzygy which is weakly coKoszul. The notion of the linear strand was introduced by Eisenbud, Fl{\o}ystad, and Schreyer in their study of resolutions over exterior algebras and the Bernstein--Gel'fand--Gel'fand correspondence; see~\cite{21}. They observed that their methods extend naturally to arbitrary Koszul dual pairs. This point of view was subsequently developed by Iwanaga, who established several fundamental connections between linearity defect, weakly Koszul modules, and Koszul duality; see~\cite{51,52}.
\subsection{Triangulated Categories}
We now collect the notions from triangulated category theory needed in the sequel, including bounded derived categories of exact categories, singularity categories, and stable categories of Gorenstein-projective modules.

\medskip 

An additive category \(\mathcal{B}\) is called an \emph{exact category} in the sense of Quillen if there exists an abelian category \(\mathcal{C}\) such that \(\mathcal{B}\) is a full extension-closed subcategory of \(\mathcal{C}\), and the exact structure on \(\mathcal{B}\) is induced by those short exact sequences in \(\mathcal{C}\) whose terms belong to \(\mathcal{B}\). We say that \(\mathcal{B}\) is \emph{weakly idempotent complete} if every retraction, equivalently every split epimorphism, in \(\mathcal{B}\) admits a kernel. Equivalently, every coretraction, equivalently every split monomorphism, in \(\mathcal{B}\) admits a cokernel. We say that \(\mathcal{B}\) satisfies the \emph{two-out-of-three property with respect to \(\mathcal{C}\)} if, for every short exact sequence \[ 0 \longrightarrow A \longrightarrow B \longrightarrow C \longrightarrow 0 \] in \(\mathcal{C}\), whenever two of the objects \(A\), \(B\), and \(C\) belong to \(\mathcal{B}\), so does the third. A full subcategory \(\mathcal{A}\subseteq\mathcal{C}\) is called a \emph{Serre subcategory} if, for every short exact sequence \[ 0 \longrightarrow A' \longrightarrow A \longrightarrow A'' \longrightarrow 0 \] in \(\mathcal{C}\), the object \(A\) belongs to \(\mathcal{A}\) if and only if both \(A'\) and \(A''\) belong to \(\mathcal{A}\). Equivalently, \(\mathcal{A}\) is closed under subobjects, quotients, and extensions in \(\mathcal{C}\). 

Let \(\mathcal{B}\) be an exact category in the above sense, with ambient abelian category \(\mathcal{C}\), and assume that \(\mathcal{B}\) satisfies the two-out-of-three property with respect to \(\mathcal{C}\). If \(\mathcal{A}\) is a Serre subcategory of \(\mathcal{B}\), then one may form the exact quotient category \[ \mathcal{B}/\mathcal{A}. \] The objects of \(\mathcal{B}/\mathcal{A}\) are the same as those of \(\mathcal{B}\), while morphisms are represented by roofs \[ X \xleftarrow{s} X' \xrightarrow{f} Y, \] where \(\ker(s)\) and \(\operatorname{coker}(s)\) belong to \(\mathcal{A}\), modulo the usual equivalence relation on such roofs.
\medskip 

A complex \(X^\bullet\in\mathsf{C}(\mathcal{B})\) is called \emph{acyclic} if it is exact when regarded as a complex in the ambient abelian category \(\mathcal{C}\).

We denote by
\( 
\mathsf{C}^{-,b}(\mathcal{B})
\)
the full subcategory of \(\mathsf{C}^{-}(\mathcal{B})\) consisting of complexes with bounded cohomology, that is,
\[
\mathsf{C}^{-,b}(\mathcal{B})
=
\left\{
X^\bullet\in\mathsf{C}^{-}(\mathcal{B})
\;\middle|\;
H^n(X^\bullet)=0
\text{ for } |n|\gg 0
\right\},
\]
and similarly,
\[
\mathsf{C}^{+,b}(\mathcal{B})
=
\left\{
X^\bullet\in\mathsf{C}^{+}(\mathcal{B})
\;\middle|\;
H^n(X^\bullet)=0
\text{ for } |n|\gg 0
\right\}.
\]

Passing to homotopy classes of chain maps yields the homotopy category
\(\mathsf{K}(\mathcal{B})\). We denote by
\(\mathsf{K}^{-}(\mathcal{B})\),
\(\mathsf{K}^{+}(\mathcal{B})\), and
\(\mathsf{K}^{b}(\mathcal{B})\)
the full triangulated subcategories consisting of complexes that are bounded above, bounded below, and bounded, respectively.

Similarly, we denote by
\( 
\mathsf{K}^{-,b}(\mathcal{B})
\,\,\text{and}\,\, 
\mathsf{K}^{+,b}(\mathcal{B})
\)
the full triangulated subcategories of
\(\mathsf{K}^{-}(\mathcal{B})\) and
\(\mathsf{K}^{+}(\mathcal{B})\),
respectively, consisting of complexes with bounded cohomology.

Let
\( 
\mathsf{Ac}^{b}(\mathcal{B})
\subseteq
\mathsf{K}^{b}(\mathcal{B})
\)
be the full subcategory of bounded acyclic complexes. Since \(\mathcal{B}\) is weakly idempotent complete, it follows that
\(\mathsf{Ac}^{b}(\mathcal{B})\)
is a thick triangulated subcategory of
\(\mathsf{K}^{b}(\mathcal{B})\).

The bounded derived category of \(\mathcal{B}\) is defined as the Verdier quotient \[ \mathsf{D}^{b}(\mathcal{B}) := \mathsf{K}^{b}(\mathcal{B}) / \mathsf{Ac}^{b}(\mathcal{B}). \] It is a triangulated category whose suspension functor is induced by the shift functor on complexes. If \(\mathcal{B}\) has enough projective objects, then there is an equivalence of triangulated categories \[ \mathsf{D}^{b}(\mathcal{B}) \;\cong\; \mathsf{K}^{-,b}\bigl(\operatorname{Proj}(\mathcal{B})\bigr), \] where \(\operatorname{Proj}(\mathcal{B})\) denotes the full subcategory of projective objects of \(\mathcal{B}\). Dually, if \(\mathcal{B}\) has enough injective objects, then there is an equivalence of triangulated categories \[ \mathsf{D}^{b}(\mathcal{B}) \;\cong\; \mathsf{K}^{+,b}\bigl(\operatorname{Inj}(\mathcal{B})\bigr), \] where \(\operatorname{Inj}(\mathcal{B})\) denotes the full subcategory of injective objects of \(\mathcal{B}\). If, moreover, \(\mathcal{B}\) has finite homological dimension and has enough projective objects, then every bounded complex admits a bounded projective resolution. Hence the preceding equivalence restricts to an equivalence \[ \mathsf{D}^{b}(\mathcal{B}) \;\cong\; \mathsf{K}^{b}\bigl(\operatorname{Proj}(\mathcal{B})\bigr). \] Dually, if \(\mathcal{B}\) has finite homological dimension and has enough injective objects, then \[ \mathsf{D}^{b}(\mathcal{B}) \;\cong\; \mathsf{K}^{b}\bigl(\operatorname{Inj}(\mathcal{B})\bigr). \]

A fundamental consequence of weak idempotent completeness is that a morphism
\( 
f\colon X^\bullet \longrightarrow Y^\bullet
\)
in \(\mathsf{D}^{b}(\mathcal{B})\) is an isomorphism if and only if its mapping cone
\(\operatorname{Cone}(f)\)
is acyclic.

Furthermore, it was shown in \cite[Proposition~2.17]{12} that if \(\mathcal{B}\) is weakly idempotent complete and \(\mathcal{A}\) is a Serre subcategory of \(\mathcal{B}\), then the quotient category
\(\mathcal{B}/\mathcal{A}\)
is again weakly idempotent complete. Consequently, the bounded derived category
\[
\mathsf{D}^{b}(\mathcal{B}/\mathcal{A})
\]
is well defined.

The above framework will be applied throughout this paper in the following setting. The category \(\mathcal{A}\) will denote the full subcategory of finite-dimensional modules over a Koszul algebra \(\Lambda\) or its Koszul dual \(\Lambda^{!}\), neither of which is assumed to be finite-dimensional. The exact category \(\mathcal{B}\) will be either the category of almost linear modules or the category of almost colinear modules. These categories are exact in the sense of Quillen, weakly idempotent complete, satisfy the two-out-of-three property with respect to the ambient abelian category \(\mathcal{C}\), and possess enough projective objects in the almost linear case and enough injective objects in the almost colinear case. Finally, \(\mathcal{C}\) will be either
\( 
\Lambda\textup{-GMod}^{+}
\,\,\text{or}\,\,
\Lambda^{!}\textup{-GMod}^{-}.
\)

Under these assumptions, \cite[Theorem~2.18]{12}, (see also \cite{41} in the
abelian case), yields a triangle equivalence
\[
\mathsf{D}^{b}(\mathcal{B})
\big/
\mathsf{D}^{b}_{\mathcal{A}}(\mathcal{B})
\;\xrightarrow{\;\sim\;}
\mathsf{D}^{b}(\mathcal{B}/\mathcal{A}),
\]
where
\[
\mathsf{D}^{b}_{\mathcal{A}}(\mathcal{B})
=
\left\{
X^\bullet\in\mathsf{D}^{b}(\mathcal{B})
\;\middle|\;
H^{n}(X^\bullet)\in\mathcal{A}
\text{ for all } n\in\mathbb Z
\right\}.
\]

We refer the reader to \cite{12,41} for further details. The above equivalence plays a fundamental role in the formulation of graded singular Koszul duality and generalized Bernstein--Gelfand--Gelfand correspondences developed in \cite{12}.

\medskip

Assume now that \(\mathcal{B}\) has enough projective objects. The \emph{singularity category} of \(\mathcal{B}\) is defined to be the Verdier quotient
\[
\mathsf{D}_{\mathrm{sg}}(\mathcal{B})
:=
\mathsf{D}^{b}(\mathcal{B})
\big/
\mathsf{K}^{b}\bigl(\operatorname{Proj}(\mathcal{B})\bigr),
\]

The singularity category measures the difference between the bounded derived category and the homotopy category of bounded complexes of projective objects. In particular,
\[
\mathsf{D}_{\mathrm{sg}}(\mathcal{B})=0
\]
whenever every object of \(\mathcal{B}\) has finite projective dimension. 

\medskip 

Assume that \(\Lambda\) is a left coherent graded algebra. A finitely presented graded \(\Lambda\)-module \(M\) is called \emph{Gorenstein projective} if there exists an exact complex
\[
P^\bullet=
\bigl(
\cdots
\longrightarrow
P^{-1}
\longrightarrow
P^{0}
\longrightarrow
P^{1}
\longrightarrow
\cdots
\bigr)
\]
of finitely generated graded projective \(\Lambda\)-modules such that
\( 
M \cong \ker(P^{0}\to P^{1}),
\)
and such that the complex
\( 
\operatorname{Hom}_{\Lambda}(P^\bullet,Q)
\)
is exact for every finitely generated graded projective \(\Lambda\)-module \(Q\).

We denote by
\( 
\Lambda\textup{-GProj}^{\mathbb Z}
\)
the full subcategory of finitely presented graded Gorenstein-projective \(\Lambda\)-modules.

Its stable category will be denoted by
\( 
\Lambda\textup{-}\underline{\mathrm{GProj}}^{\mathbb Z}.
\)

A fundamental theorem of Buchweitz~\cite{17} and, independently, Orlov~\cite{43,44}, see also~\cite{18} for a generalization to coherent algebras, asserts that if \(\Lambda\) is Iwanaga--Gorenstein, then there is an equivalence of triangulated categories \[ \Lambda\textup{-}\underline{\mathrm{GProj}}^{\mathbb Z} \;\xrightarrow{\;\sim\;} \mathsf{D}_{\mathrm{sg}} \bigl(\Lambda\textup{-}\mathrm{Fp}^{\mathbb Z}\bigr), \] where \[ \mathsf{D}_{\mathrm{sg}} \bigl(\Lambda\textup{-}\mathrm{Fp}^{\mathbb Z}\bigr) := \mathsf{D}^{b} \bigl(\Lambda\textup{-}\mathrm{Fp}^{\mathbb Z}\bigr) \Big/ \mathsf{K}^{b} \bigl(\Lambda\textup{-}\mathrm{Proj}^{\mathbb Z}\bigr) \] denotes the graded singularity category of \(\Lambda\).

\medskip 

If \(\Lambda\) is left graded coherent, we define the tails category of
\(\Lambda\) by
\[
\operatorname{qgr}(\Lambda)
=
\Lambda\textup{-}\mathrm{Fp}^{\mathbb{Z}}
\big/
\Lambda\textup{-}\mathrm{gmod}.
\]
Dually, if \(\Lambda\) is left graded cocoherent, we define the cotails
category of \(\Lambda\) by
\[
\operatorname{cqgr}(\Lambda)
=
\Lambda\textup{-}\mathrm{Fcp}^{\mathbb{Z}}
\big/
\Lambda\textup{-}\mathrm{gmod}.
\]
In algebraic geometry, the tails category is usually considered under the
stronger assumption that \(\Lambda\) is left graded noetherian.

\medskip 

We end this subsection by recalling a triangulated construction naturally associated with stable categories. Let \(\mathcal D\) be a weakly idempotent complete exact category, in the sense of Quillen, and assume that \(\mathcal D\) has enough injectives. We denote by \[ \overline{\mathcal D} = \mathcal D/\operatorname{Inj}(\mathcal D) \] the stable category obtained by factoring out morphisms which factor through injective objects. In general, the category \(\overline{\mathcal D}\) is not triangulated. One can nevertheless associate to it a triangulated category, called the \emph{stabilization} of \(\overline{\mathcal D}\), and denoted by \[ \mathsf S(\overline{\mathcal D}). \] We refer to \cite{7} for details and a systematic study of this construction. The stabilization will be used below in the formulation of a generalized BGG correspondence, or equivalently, a Koszul-duality description of the bounded derived category of coherent sheaves. A basic property of the stabilization is that there is a triangulated equivalence \[ \mathsf S(\overline{\mathcal D}) \simeq \mathsf D^{b}(\mathcal D) \big/ \mathsf K^{b}\bigl(\operatorname{Inj}(\mathcal D)\bigr). \]

\subsection{Koszul Duality}
We conclude the preliminary section by reviewing the construction of Koszul duality established in~\cite{6, 15,16,39}. Since only a special case will be needed in the sequel, we restrict our attention to the categories
\[
\Lambda^{!}\textup{-GMod}^{-}
\qquad\text{and}\qquad
\Lambda\textup{-GMod}^{+}.
\]

\medskip

Following \cite{6,15,16}, we define the \emph{Koszul functor}
\[
K \colon
\Lambda^{!}\textup{-GMod}^{-}
\longrightarrow
\mathsf{C}\bigl(\Lambda\textup{-GMod}^{+}\bigr).
\]

For a module
\(M\in \Lambda^{!}\textup{-GMod}^{-}\),
the complex \(K(M)\) is given by
\[
K(M)^n
=
\bigoplus_{x\in Q_0}
P_x\langle n\rangle
\otimes_k
M_n(x),
\]
with differential
\[
d^n
=
\sum_{\alpha:y\to x}
P_\alpha
\otimes
M(\alpha^{\operatorname{op}}),
\qquad
P_\alpha(p)=p\alpha.
\]

Dually, one defines the coKoszul functor 
\[
K^{\vee} \colon
\Lambda\textup{-GMod}^{+}
\longrightarrow
\mathsf{C}\bigl(\Lambda^{!}\textup{-GMod}^{-}\bigr),
\]
 For a module
\(M\in \Lambda\textup{-GMod}^{+}\),
the complex \(K^{\vee}(M)\) is given by
\[
K^{\vee}(M)^n
=
\bigoplus_{x\in Q_0}
I_x^{!}\langle n\rangle
\otimes_k
M_n(x).
\]

We now introduce the full subcategories
\[
\mathsf{C}^{\downarrow}(\Lambda^{!}\textup{-GMod}^{-})
\subseteq
\mathsf{C}(\Lambda^{!}\textup{-GMod}^{-})
\]
and
\[
\mathsf{C}^{\uparrow}(\Lambda\textup{-GMod}^{+})
\subseteq
\mathsf{C}(\Lambda\textup{-GMod}^{+}),
\]
which play a central role in Koszul duality.

A complex \(X^\bullet\) belongs to
\(\mathsf{C}^{\downarrow}(\Lambda^{!}\textup{-GMod}^{-})\)
if
\[
X^i_j=0
\qquad
\text{whenever }
i+j\gg 0
\quad\text{or}\quad
i\ll 0.
\]

Similarly, a complex \(Y^\bullet\) belongs to
\(\mathsf{C}^{\uparrow}(\Lambda\textup{-GMod}^{+})\)
if
\[
Y^i_j=0
\qquad
\text{whenever }
i+j\ll 0
\quad\text{or}\quad
i\gg 0.
\]

Here \(i\) denotes the cohomological degree and \(j\) the internal grading.

\medskip

The corresponding homotopy categories are denoted by
\[
\mathsf{K}^{\downarrow}(\Lambda^{!}\textup{-GMod}^{-})
\qquad\text{and}\qquad
\mathsf{K}^{\uparrow}(\Lambda\textup{-GMod}^{+}),
\]
while the corresponding derived categories are denoted by
\[
\mathsf{D}^{\downarrow}(\Lambda^{!}\textup{-GMod}^{-})
\qquad\text{and}\qquad
\mathsf{D}^{\uparrow}(\Lambda\textup{-GMod}^{+}).
\]

As shown in \cite[Section~5]{15, 16}, the functor \(K\) extends to complexes by applying \(K\) degreewise and then taking the totalization of the resulting double complex. This yields a functor
\[
\mathcal{F}\colon
\mathsf{C}^{\downarrow}(\Lambda^{!}\textup{-GMod}^{-})
\longrightarrow
\mathsf{C}^{\uparrow}(\Lambda\textup{-GMod}^{+}).
\]

Since \(\mathcal{F}\) preserves homotopies and acyclic complexes, it induces functors
\[
\mathscr{F}\colon
\mathsf{K}^{\downarrow}(\Lambda^{!}\textup{-GMod}^{-})
\longrightarrow
\mathsf{K}^{\uparrow}(\Lambda\textup{-GMod}^{+})
\]
and
\[
\mathfrak{F}\colon
\mathsf{D}^{\downarrow}(\Lambda^{!}\textup{-GMod}^{-})
\longrightarrow
\mathsf{D}^{\uparrow}(\Lambda\textup{-GMod}^{+}).
\]

Dually, the coKoszul functor
\[
K^{\vee}\colon
\Lambda\textup{-GMod}^{+}
\longrightarrow
\mathsf{C}\bigl(\Lambda^{!}\textup{-GMod}^{-}\bigr)
\]
extends to functors
\[
\mathcal{G}\colon
\mathsf{C}^{\uparrow}(\Lambda\textup{-GMod}^{+})
\longrightarrow
\mathsf{C}^{\downarrow}(\Lambda^{!}\textup{-GMod}^{-}),
\]
\[
\mathscr{G}\colon
\mathsf{K}^{\uparrow}(\Lambda\textup{-GMod}^{+})
\longrightarrow
\mathsf{K}^{\downarrow}(\Lambda^{!}\textup{-GMod}^{-}),
\]
and
\[
\mathfrak{G}\colon
\mathsf{D}^{\uparrow}(\Lambda\textup{-GMod}^{+})
\longrightarrow
\mathsf{D}^{\downarrow}(\Lambda^{!}\textup{-GMod}^{-}).
\]

The following theorem is the fundamental result of Koszul duality. It was established in greater generality in \cite{6, 15,16, 39}; in our setting, it takes the following form.

\begin{theorem}[{\cite[Theorem~5.16]{15}}, {\cite[Theorem~5.7]{16}}]
\label{thm:koszul-duality}
The functors \(\mathfrak{F}\) and \(\mathfrak{G}\) are quasi-inverse
equivalences of triangulated categories. In particular, there is an equivalence
of triangulated categories
\[
\begin{tikzcd}[column sep=huge]
\mathsf{D}^{\downarrow}\!\bigl(\Lambda^{!}\textup{-}\mathrm{GMod}^{-}\bigr)
\arrow[r, bend left=35, "\mathfrak{F}"]
&
\mathsf{D}^{\uparrow}\!\bigl(\Lambda\textup{-}\mathrm{GMod}^{+}\bigr)
\arrow[l, bend left=35, "\mathfrak{G}"']
\end{tikzcd}
\]
which we refer to as the \emph{Koszul duality}.
\end{theorem}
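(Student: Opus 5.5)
The plan is to follow the standard Beilinson--Ginzburg--Soergel argument, as adapted in \cite{15,16}, and to reduce everything to the Koszul property of \(\Lambda\), i.e.\ to the exactness of the Koszul complexes attached to the simples.

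\textbf{Step 1: well-definedness.} First I will check that \(\mathcal{F}\) and \(\mathcal{G}\) actually land in the indicated categories. Let \(X^\bullet\in\mathsf{C}^{\downarrow}(\Lambda^{!}\textup{-GMod}^{-})\). Applying \(K\) degreewise produces a double complex whose \((i,n)\)-term is
\[
\bigoplus_x P_x\langle n\rangle\otimes_k X^i_n(x).
\]
The conditions \(X^i_j=0\) for \(i+j\gg0\) or \(i\ll0\) force only finitely many nonzero terms along each antidiagonal and in each internal degree. Hence the totalization exists, is locally finite, and satisfies the \(\uparrow\)-vanishing conditions. The argument for \(\mathcal{G}\) is dual.

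\textbf{Step 2: passage to \(\mathsf{K}\) and \(\mathsf{D}\).} Both functors are additive and commute with cones and shifts, so they descend to triangulated functors on the homotopy categories. To see that they preserve acyclicity, I will filter the totalization by internal degree. The boundedness built into the \(\downarrow\)/\(\uparrow\) conditions makes this filtration finite in each degree, so a spectral-sequence (or direct filtration) argument reduces acyclicity of \(\mathcal{F}(X^\bullet)\) to acyclicity of the rows, which is exactly acyclicity of \(X^\bullet\).

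\textbf{Step 3: the unit and counit.} Next I will construct natural transformations \(\mathcal{G}\mathcal{F}\to\mathrm{id}\) and \(\mathrm{id}\to\mathcal{F}\mathcal{G}\) at the level of complexes. For example, \(\mathcal{F}\mathcal{G}(Y)\) is the totalization of a double complex built from the bimodule Koszul complex \(\Lambda\otimes(\Lambda^{!})^{*}\). The natural augmentation of this complex gives a map to \(Y\), and its cone is the totalization of \(\mathcal{K}\otimes Y\), where \(\mathcal{K}\) is the acyclic bimodule Koszul resolution. That \(\mathcal{K}\) is acyclic is exactly the Koszulity of \(\Lambda\), which gives linear resolutions of the \(S_x\). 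Using Koszulity of \(\Lambda^{!}\) for the other composite, it then suffices to show that these totalizations of acyclic double complexes remain acyclic.

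\textbf{Main obstacle.} The hard point is the convergence issue in Step 3: totalizations of unbounded double complexes with exact rows need not be exact. The remedy is to use the \(\uparrow\)/\(\downarrow\) conditions. In each fixed internal degree \(j\), these conditions make the relevant double complex bounded in the appropriate direction, so a row-filtration argument in each internal degree separately yields exactness. This degreewise bounded-direction check is the step I expect to require the most care. Once it is done, the unit and counit are isomorphisms in \(\mathsf{D}\) by the cone criterion, and \(\mathfrak{F}\) and \(\mathfrak{G}\) are quasi-inverse triangle equivalences.
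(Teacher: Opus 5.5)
The paper gives no argument of its own for this statement. The theorem is imported from \cite[Theorem~5.16]{15} and \cite[Theorem~5.7]{16}, and the surrounding text only records how $\mathcal{F},\mathcal{G}$ are built from $K,K^{\vee}$ (degreewise application, then totalization), asserting without proof that they preserve homotopies and acyclicity. So your proposal supplies a self-contained proof where the paper has a reference. Your unit/counit argument through the augmented bimodule Koszul complexes is sound. Its merit is that it isolates the two inputs: exactness of the augmented Koszul complexes of $\Lambda$ and $\Lambda^{!}$, and convergence of totalizations forced by the $\uparrow/\downarrow$ vanishing. The citation leaves this bookkeeping to \cite{15,16}.

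Three points need tightening.

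First, $\mathcal{F}$ is of tensor type (built from the $P_x$) and $\mathcal{G}$ is of Hom type (built from the $I^{!}_x$). So the natural maps are $X\to\mathcal{G}\mathcal{F}(X)$ and $\mathcal{F}\mathcal{G}(Y)\to Y$. The first sentence of your Step~3 reverses both, although your augmentation $\mathcal{F}\mathcal{G}(Y)\to Y$ is the right map.

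Second, the cone of that augmentation is $\operatorname{Tot}(\widetilde{\mathcal{K}}\otimes_{\Lambda}Y)$, where $\widetilde{\mathcal{K}}$ is the \emph{augmented} bimodule Koszul complex. Acyclicity of $\widetilde{\mathcal{K}}$ alone does not make the columns $\widetilde{\mathcal{K}}\otimes_{\Lambda}Y^{i}$ exact. What does is that $\widetilde{\mathcal{K}}$ is a bounded-above exact complex of projective right $\Lambda$-modules, hence contractible on the right. Dually, the cone of $X\to\mathcal{G}\mathcal{F}(X)$ is $\operatorname{Tot}\bigl(\operatorname{Hom}_{\Lambda^{!}}(\widetilde{\mathcal{K}}^{!},X)\bigr)$. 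Here right boundedness of each $X^{i}$ is what identifies this Hom complex with the direct-sum formula defining $\mathcal{G}\mathcal{F}(X)$.

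Third, the finiteness in Step~1 holds per pair (total degree, internal degree), not per antidiagonal. For example, take $X^{i}=S^{!}_{x}\langle i\rangle$ for all $i\geq 0$ with zero differential. It lies in $\mathsf{C}^{\downarrow}$ and contributes to every term of the antidiagonal $i+n=0$. Hence in Step~2 you must fix the internal degree $j$ of $\mathcal{F}(X^{\bullet})$. Suppose $X^{i}_{n}=0$ whenever $i+n>C$ or $i<-C'$; then only the rows indexed by $-j\leq n\leq C+C'$ survive, and this finite row filtration gives acyclicity. In Step~3, by contrast, the double complexes indexed by $i$ and a Koszul degree bounded below already have finitely many terms in each total degree. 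This is because $Y^{i}=0$ for $i\gg 0$ and $X^{i}=0$ for $i\ll 0$, so the column filtration is finite there without fixing $j$.
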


The next proposition records some basic properties of the Koszul duality functors.

\begin{proposition}[{\cite[Lemma~5.10]{15}}]
The functor \(\mathfrak{F}\) sends the injective module
\(I^{!}_{x}\langle i\rangle\)
to the minimal projective resolution of the simple module
\(S_{x}\langle i\rangle\),
and sends the simple module
\(S^{!}_{x}\langle i\rangle\)
to the projective module
\(P_{x}\langle i\rangle\).

Dually, the functor \(\mathfrak{G}\) sends the projective module
\(P_{x}\langle i\rangle\)
to the minimal injective resolution of the simple module
\(S^{!}_{x}\langle i\rangle\),
and sends the simple module
\(S_{x}\langle i\rangle\)
to the injective module
\(I^{!}_{x}\langle i\rangle\).

Moreover, for every
\(
X^\bullet \in
\mathsf{D}^{\downarrow}
(\Lambda^{!}\textup{-GMod}^{-})
\)
and every \(i\in\mathbb Z\), 
\[
\mathfrak{F}\bigl(X^\bullet\langle i\rangle\bigr)
=
\mathfrak{F}(X^\bullet)\langle -i\rangle[i].
\]
\end{proposition}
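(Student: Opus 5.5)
We compute \(\mathfrak F\) directly from the definition of the Koszul functor \(K\), using that \(\mathfrak F\) is defined degreewise on complexes and then totalized; all three assertions are therefore checked at the level of \(\mathcal F\). The statements for \(\mathfrak G\) are formally dual: replace \(K\) by \(K^{\vee}\), projectives by injectives, and \(\Lambda\) by \(\Lambda^{!}\), using \((\Lambda^{!})^{!}\cong\Lambda\).

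\emph{Simple modules.} The simple \(S^{!}_x\langle i\rangle\) is one-dimensional, at vertex \(x\), and concentrated in a single internal degree \(n=-i\). Hence \(K(S^{!}_x\langle i\rangle)\) has a single nonzero term, \(P_x\langle n\rangle\otimes_k k\), placed in cohomological degree \(n\). The differential vanishes because every arrow acts by zero on a simple module. Up to the normalization of shifts fixed in the final step, this is \(P_x\langle i\rangle\).

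\emph{Injective modules.} The injective \(I^{!}_x=D(e_x\Lambda^{!})\) has \(I^{!}_x(y)_{-n}=D(e_x\Lambda^{!}_n e_y)\). Consequently \(K(I^{!}_x)^{-n}\cong\bigoplus_y P_y\langle -n\rangle\otimes D(e_x\Lambda^{!}_ne_y)\), and this term is generated in degree \(n\). The differential is multiplication by arrows paired with the dual action of \(\Lambda^{!}\). I would identify \(D(e_x\Lambda^{!}_n e_y)\) with \(e_y\operatorname{Ext}^n_\Lambda(S_x,S_y)^{*}\), which is the space of degree-\(n\) generators of the minimal linear resolution of \(S_x\). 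This uses the standard fact that for Koszul \(\Lambda\) the Yoneda algebra is \(\Lambda^{!}\) (up to op-conventions). Under this identification the complex \(K(I^{!}_x)\) is exactly the classical Koszul complex \(\Lambda\otimes(\Lambda^{!})^{*}\) restricted to \(e_x\). Its exactness in negative degrees, with \(H^0=S_x\), is precisely the Koszulity of \(\Lambda\). Minimality holds because the differential has entries in \(\Lambda_1\), so it lands in the radical. Grading shifts then follow from the third claim.

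\emph{Shift formula.} Shifting \(M\) by \(\langle i\rangle\) reindexes \(M_n\) as \((M\langle i\rangle)_{n-i}\). In the formula for \(K\) the index \(n\) appears both as the cohomological degree and inside \(P_x\langle n\rangle\). Comparing the terms of \(K(M\langle i\rangle)\) with those of \(K(M)\) term by term therefore gives an isomorphism of complexes \(K(M\langle i\rangle)\cong K(M)\langle -i\rangle[i]\). This isomorphism commutes with the differentials, since these are built from the same maps \(P_\alpha\otimes M(\alpha^{\mathrm{op}})\), with sign conventions for \([i]\) absorbed by the usual sign twist. Passing to totalizations of complexes extends this to \(\mathcal F\), and hence to \(\mathfrak F\) on \(\mathsf D^{\downarrow}\).

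The main obstacle is the identification of \(K(I^{!}_x)\) with the minimal linear resolution of \(S_x\). This requires carefully matching the pairing \(V\times V^{\mathrm{op}}\to k\) with the quadratic dual relations \(I_2^{\perp}\), to see that the differentials square to zero and coincide with the Koszul complex. It then relies on the characterization of Koszulity as exactness of the Koszul complex, which I would quote from \cite{6}. The remaining steps, namely the simple-module computation and the shift formula, are routine bookkeeping of shifts and signs.
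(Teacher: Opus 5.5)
The paper gives no proof of this proposition; it imports it from \cite[Lemma~5.10]{15}, so there is no internal argument to compare with. Your proposal is the standard direct verification from the formula for $K$, and it is correct in outline. The simple-module computation and the shift formula are exact bookkeeping. The injective case reduces, as you say, to recognizing $K(I^{!}_x)$ as the Koszul complex of $S_x$ and then using the criterion of \cite{6}: $\Lambda$ is Koszul if and only if that complex is exact. Minimality follows because the differentials have entries in $\Lambda_1$.

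Three points need correcting or tightening.

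\textbf{Shifts.} Your computation gives $K(S^{!}_x\langle i\rangle)=P_x\langle -i\rangle[i]$, and no normalization turns this into $P_x\langle i\rangle$. Likewise $\mathfrak{F}(I^{!}_x\langle i\rangle)$ is the minimal projective resolution of $S_x\langle -i\rangle$, shifted by $[i]$. Read literally, the first two clauses of the statement hold only for $i=0$, and for $i\neq 0$ they contradict the third clause. The correct general form is the one forced by the shift formula. It is also the form the paper itself uses later: in the proof of Theorem~4.4 it writes $\mathfrak{F}(S_x\langle i\rangle)\cong P^{!}_x\langle -i\rangle[i]$. State this explicitly rather than hedging with ``up to normalization''.

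\textbf{The injective case.} Identifying $D(e_x\Lambda^{!}_n e_y)$ with a dual Ext space only matches terms, not differentials, so that step can be dropped. What actually does the work is the following identification, made via the path pairing and vertex by vertex:
\[
D(e_x\Lambda^{!}_n e_y)\;\cong\; e_y\Bigl(\bigcap_{a+b=n-2}(kQ)_1^{\otimes a}\otimes I_2\otimes(kQ)_1^{\otimes b}\Bigr)e_x .
\]
The element $\alpha^{\mathrm{op}}$ acts on $I^{!}_x=D(e_x\Lambda^{!})$ by the transpose of right multiplication by $\alpha^{\mathrm{op}}$. Under the identification above, this becomes splitting off the arrow $\alpha$, and then the differential $\sum_\alpha P_\alpha\otimes I^{!}_x(\alpha^{\mathrm{op}})$ is exactly the Koszul differential. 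Also, $d^2=0$ needs no separate check. It holds for $K(M)$ for every $\Lambda^{!}$-module $M$, by orthogonality of $I_2$ and $I_2^{\perp}$.

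\textbf{The dual half.} The statements for $\mathfrak{G}$ require exactness of $K^{\vee}(P_x)$ in positive degrees, and that is the Koszulity of $\Lambda^{!}$. So you need to cite that $\Lambda^{!}$ is Koszul whenever $\Lambda$ is, not only that $(\Lambda^{!})^{!}\cong\Lambda$.
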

\section{General Bounded Koszul Duality}
In this section, we establish our main results on infinite-dimensional Koszul algebras. More precisely, we prove a bounded Koszul duality for Koszul algebras without imposing any additional finiteness assumptions on either the algebra \(\Lambda\) or its Koszul dual \(\Lambda^{!}\). This extends the results of \cite{12} from the finite-dimensional setting to a broad class of infinite-dimensional Koszul algebras. As applications, we obtain a graded singular Koszul duality and a generalized Bernstein--Gelfand--Gelfand correspondence for coherent Iwanaga--Gorenstein Koszul algebras.

We then specialize our results to two important classes of Koszul algebras, namely, quadratic monomial algebras, and absolutely Koszul algebras with an extra homological property.
\subsection{Bounded Koszul Duality for Infinite-Dimensional Koszul Algebras}
Before stating and proving the main theorem of this subsection, we establish several preliminary results. The first proposition clarifies the relationship between coKoszul modules over \(\Lambda^{!}\) and Koszul modules over \(\Lambda\).

\begin{proposition}
The Koszul duality restricts to an equivalence of categories
\[
\begin{tikzcd}[column sep=huge]
\Lambda^{!}\textup{-CKo}^{\mathbb{Z}}
\arrow[r, bend left=35, "\mathfrak{F}"]
&
\Lambda\textup{-Ko}^{\mathbb{Z}}
\arrow[l, bend left=35, "\mathfrak{G}"']
\end{tikzcd}
\]
\end{proposition}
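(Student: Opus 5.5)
We will show that \(\mathfrak{F}\) sends each coKoszul \(\Lambda^{!}\)-module, viewed as a complex concentrated in cohomological degree \(0\), to a complex whose only cohomology is a Koszul \(\Lambda\)-module in degree \(0\). Dually, \(\mathfrak{G}\) sends Koszul \(\Lambda\)-modules to coKoszul \(\Lambda^{!}\)-modules. Since \(\mathfrak{F}\) and \(\mathfrak{G}\) are quasi-inverse by Theorem~\ref{thm:koszul-duality}, the restriction statement then follows formally. Both categories sit fully faithfully in the derived categories as complexes concentrated in degree \(0\), so morphisms are preserved. The identification \(\mathfrak{G}\mathfrak{F}(M)\cong M\) in the derived category gives an isomorphism of modules, because both sides are concentrated in degree \(0\).

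The key step is to compute \(\mathfrak{F}(M)\) for \(M\) coKoszul. Take a minimal colinear injective resolution \(0\to M\to I^{0}\to I^{1}\to\cdots\) with \(I^{n}\cong\bigoplus_x I^{!}_x\langle n\rangle^{\beta_{x,n}}\). Then \(M\) is isomorphic in \(\mathsf{D}^{\downarrow}\) to the complex \(I^{\bullet}\), and this complex does lie in \(\mathsf{C}^{\downarrow}\): \(I^n\) is concentrated in internal degrees \(\le -n\) up to a bounded error, so \(i+j\le 0\), and the complex vanishes for \(i<0\). Apply \(\mathfrak{F}\) termwise using the preceding Proposition. Each \(I^{!}_x\langle n\rangle\) goes to the minimal linear projective resolution of \(S_x\langle n\rangle\), shifted via \(\mathfrak{F}(X\langle i\rangle)=\mathfrak{F}(X)\langle -i\rangle[i]\). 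Totalizing, \(\mathfrak{F}(M)\) is a complex whose term in cohomological degree \(-n\) is a sum of copies of \(P_x\langle -n\rangle\) or \(P_x\langle n\rangle\), depending on the sign conventions. In other words it is a linear complex of projectives concentrated in nonpositive cohomological degrees. Alternatively, and more cleanly, one can compute \(K(M)\) directly from the definition: \(K(M)^n=\bigoplus_x P_x\langle n\rangle\otimes M_n(x)\). Since \(M\) is cogenerated in degree \(0\) and right bounded, this is a linear complex of projectives concentrated in degrees \(n\le 0\).

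The main obstacle is showing that this linear complex is exact except at degree \(0\), so that it is a linear projective resolution of \(H^{0}\), which is generated in degree \(0\) and hence Koszul. For this I would use the Ext characterizations. Consider \(\operatorname{Hom}_{\mathsf{D}}(\mathfrak{F}(M),S_x\langle -i\rangle[n])\), which by Theorem~\ref{thm:koszul-duality} equals \(\operatorname{Hom}(M,\mathfrak{G}(S_x\langle -i\rangle)[n])=\operatorname{Hom}(M,I^{!}_x\langle -i\rangle[n'])\) with \(n'\) depending on \(i\) through the shift rule. Because injectives have no higher Ext, this group vanishes unless a specific relation between \(n\) and \(i\) holds. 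That relation is \(\operatorname{Hom}(M,I^!_x\langle\cdot\rangle)=D(M_{\cdot}(x))\), and it is nonzero only in the single degree compatible with linearity. In the other direction, the hypothesis \(\operatorname{Ext}^n(S^!_x\langle i\rangle,M)=0\) for \(i\neq n\) translates, via \(\mathfrak{F}(S^{!}_x\langle i\rangle)=P_x\langle i\rangle\), into
\[
\operatorname{Hom}_{\mathsf{D}}\bigl(P_x\langle i\rangle,\mathfrak{F}(M)[n]\bigr)=0\quad\text{unless } i=n .
\]
Since \(\operatorname{Hom}(P_x\langle i\rangle,-)\) reads off graded pieces of cohomology, this says \(H^{*}(\mathfrak{F}(M))\) is concentrated on a diagonal. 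Combined with the fact that \(\mathfrak{F}(M)\) lives in nonpositive degrees with linear terms, a degree count forces all cohomology to lie in cohomological degree \(0\). The graded piece of \(H^{-n}\) in internal degree \(j\) comes from the terms at \(-n\), which sit in internal degree \(\ge n\), yet the diagonal condition places it at a single internal degree. Comparing with the vanishing at the neighbouring terms will give \(H^{-n}=0\) for \(n>0\). I expect the bookkeeping of sign and shift conventions here to be the delicate part.

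The dual argument, with projectives and injectives interchanged and using \(\mathfrak{G}(S_x\langle i\rangle)=I^{!}_x\langle i\rangle\), shows that \(\mathfrak{G}\) maps \(\Lambda\textup{-Ko}^{\mathbb Z}\) into \(\Lambda^{!}\textup{-CKo}^{\mathbb Z}\), which completes the proof.
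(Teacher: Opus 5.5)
There is a genuine gap in your key step, and it comes from exactly the shift bookkeeping you flagged. You translate the coKoszul condition using \(\mathfrak F(S^{!}_x\langle i\rangle)=P_x\langle i\rangle\), which drops the cohomological shift. From the definition of \(K\), the complex \(K(S^{!}_x\langle i\rangle)\) is \(P_x\langle -i\rangle\) placed in cohomological degree \(-i\). In other words \(\mathfrak F(S^{!}_x\langle i\rangle)\cong P_x\langle -i\rangle[i]\), in accordance with the shift rule \(\mathfrak F(X\langle i\rangle)=\mathfrak F(X)\langle -i\rangle[i]\) of Proposition~2.2.

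Your ``diagonal'' claim \(\operatorname{Hom}_{\mathsf D}(P_x\langle i\rangle,\mathfrak F(M)[n])=0\) for \(i\neq n\) is therefore false. Take \(\Lambda=k[x]/(x^2)\), so that \(\Lambda^{!}=k[y]\). The simple \(M=S^{!}\) is coKoszul and \(\mathfrak F(M)=K(M)=P\) in degree \(0\), yet \(\operatorname{Hom}(P\langle -1\rangle,P)=P_1=kx\neq 0\).

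Even granting the diagonal condition, the proposed degree count cannot work. Over the same \(\Lambda\), consider the linear complex \(\cdots\to P\langle -3\rangle\to P\langle -2\rangle\to P\langle -1\rangle\to 0\), with differentials given by right multiplication by \(x\) and last nonzero term in cohomological degree \(-1\). It lives in nonpositive degrees, and its only cohomology is \(H^{-1}=S\langle -1\rangle\), which sits exactly on your diagonal. Yet the complex is not exact away from degree \(0\). Linearity plus a diagonal condition still allows semisimple cohomology in the generating degree of each term, so nothing forces \(H^{-n}=0\) for \(n>0\).

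The missing step is to use the shift rule correctly, which is what the paper does. Since \(P_x\langle j\rangle\cong\mathfrak F\bigl(S^{!}_x\langle -j\rangle[j]\bigr)\) and \(\mathfrak F\) is an equivalence,
\[
\operatorname{Hom}_{\mathsf D}\bigl(P_x\langle j\rangle,\mathfrak F(M)[n]\bigr)
\cong
\operatorname{Hom}_{\mathsf D}\bigl(S^{!}_x\langle -j\rangle,M[n-j]\bigr)
=
\operatorname{Ext}^{\,n-j}\bigl(S^{!}_x\langle -j\rangle,M\bigr).
\]
The coKoszul criterion kills this group unless \(n-j=-j\), that is, for every \(n\neq 0\) and every \(j\). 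These groups are precisely the graded pieces of \(H^n(K(M))\) at the vertex \(x\). Hence \(H^n(K(M))=0\) for all \(n\neq 0\) directly, with no degree count needed.

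Then \(K(M)\), which you correctly described as a linear complex of projectives in degrees \(\leq 0\), is a linear projective resolution of \(H^0(K(M))\). This module is generated in degree \(0\), hence Koszul. The rest of your outline, namely the dual argument for \(\mathfrak G\) and the formal passage to an equivalence via the quasi-inverse property, is fine.
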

\begin{proof}
We shall prove only one direction, since the other is dual.

Let \(M\) be a coKoszul module over \(\Lambda^{!}\). We shall show that the complex \(K(M)\) is exact except in cohomological degree \(0\).

It is well known that
\[
H^{n}(K(M))
\cong
\bigoplus_{i\in\mathbb Z}
\operatorname{Hom}_{\mathsf D(\Lambda\textup{-GMod})}
\bigl(\Lambda,K(M)\langle i\rangle[n]\bigr).
\]

Hence
\[
H^{n}(K(M))
\cong
\bigoplus_{i\in\mathbb Z}
\operatorname{Hom}_{\mathsf D(\Lambda\textup{-GMod})}
\Bigl(
\bigoplus_{\substack{x\in Q_{0}\\ j\in\mathbb Z}}
P_{x}\langle j\rangle,
K(M)\langle i\rangle[n]
\Bigr).
\]

By Proposition~2.2, we obtain
\[
\begin{aligned}
H^{n}(K(M))
&\cong
\bigoplus_{i\in\mathbb Z}
\operatorname{Hom}_{\mathsf D(\Lambda^{!}\textup{-GMod})}
\Bigl(
\bigoplus_{\substack{x\in Q_{0}\\ j\in\mathbb Z}}
S^{!}_{x}\langle -j\rangle[j],
M\langle -i\rangle[n+i]
\Bigr)
\\
&\cong
\bigoplus_{i\in\mathbb Z}
\prod_{\substack{x\in Q_{0}\\ j\in\mathbb Z}}
\operatorname{Hom}_{\mathsf D(\Lambda^{!}\textup{-GMod})}
\Bigl(
S^{!}_{x}\langle i-j\rangle,
M[n+i-j]
\Bigr)
\\
&\cong
\bigoplus_{i\in\mathbb Z}
\prod_{\substack{x\in Q_{0}\\ j\in\mathbb Z}}
\operatorname{Ext}^{\,n+i-j}_{\Lambda^{!}\textup{-GMod}}
\bigl(S^{!}_{x}\langle i-j\rangle,M\bigr).
\end{aligned}
\]

Since \(M\) is coKoszul, we have
\[
\operatorname{Ext}^{\,n+i-j}_{\Lambda^{!}\textup{-GMod}}
\bigl(S^{!}_{x}\langle i-j\rangle,M\bigr)
=
0
\]
whenever
\[
n+i-j\neq i-j,
\]
that is, whenever \(n\neq 0\). Therefore,
\[
H^{n}(K(M))=0
\qquad
\text{for all } n\neq 0.
\]

Hence \(K(M)\) has cohomology concentrated in degree \(0\). It follows that \(K(M)\) is isomorphic in
\(\mathsf{D}^{\uparrow}(\Lambda\textup{-GMod}^{+})\)
to its degree-zero cohomology module \(H^{0}(K(M))\). Since \(K(M)\) is a linear complex, \(H^{0}(K(M))\) is a Koszul \(\Lambda\)-module. Therefore,
\[
H^{0}(K(M))
\in
\Lambda\textup{-Ko}^{\mathbb Z}.
\]

\medskip 

This proves that \(\mathfrak{F}\) sends coKoszul \(\Lambda^{!}\)-modules to Koszul \(\Lambda\)-modules.
\end{proof}
Next, we use Proposition~3.1 to characterize certain subcategories of modules over \(\Lambda\) and its Koszul dual \(\Lambda^{!}\), which will play a central role in the formulation of our bounded  derived Koszul duality. To this end, we first restrict Koszul duality to suitable bounded derived categories.

Following the construction of Koszul duality, we consider the Koszul functors
\[
K \colon
\Lambda^{!}\textup{-GMod}^{-}
\longrightarrow
\mathsf{C}\bigl(\Lambda\textup{-GMod}^{+}\bigr)
\]
and
\[
K^{\vee} \colon
\Lambda\textup{-GMod}^{+}
\longrightarrow
\mathsf{C}\bigl(\Lambda^{!}\textup{-GMod}^{-}\bigr).
\]

We denote by
\(\Lambda^{!}\textup{-}\mathrm{GMod}^{-,b}\)
the full subcategory of
\(\Lambda^{!}\textup{-}\mathrm{GMod}^{-}\)
formed by those modules \(M\) for which the complex \(K(M)\) has bounded
cohomology. Dually, we denote by
\(\Lambda\textup{-}\mathrm{GMod}^{+,b}\)
the full subcategory of
\(\Lambda\textup{-}\mathrm{GMod}^{+}\)
formed by those modules \(N\) for which the complex \(K^{\vee}(N)\) has bounded
cohomology.

These categories are weakly idempotent complete and satisfy, respectively, two-out-of-three  property  inside
\(\Lambda^{!}\textup{-}\mathrm{GMod}^{-}\) and
\(\Lambda\textup{-}\mathrm{GMod}^{+}\). Furthermore,
\(\Lambda^{!}\textup{-}\mathrm{gmod}\) is a Serre subcategory of
\(\Lambda^{!}\textup{-}\mathrm{GMod}^{-,b}\), and
\(\Lambda\textup{-}\mathrm{gmod}\) is a Serre subcategory of
\(\Lambda\textup{-}\mathrm{GMod}^{+,b}\).

\medskip 

By the construction of Koszul duality; see, for example, \cite{12,15,16} or \cite[Theorem~12]{39}, the Koszul functors induce equivalences
\[
K \colon
\Lambda^{!}\textup{-GMod}^{-}
\xrightarrow{\ \sim\ }
\mathcal{LC}^{-}\bigl(\Lambda\textup{-Proj}^{\mathbb Z}\bigr)
\]
and
\[
K^{\vee} \colon
\Lambda\textup{-GMod}^{+}
\xrightarrow{\ \sim\ }
\mathcal{CLC}^{+}\bigl(\Lambda^{!}\textup{-Inj}^{\mathbb Z}\bigr).
\]

Here
\( 
\mathcal{LC}^{-}\bigl(\Lambda\textup{-Proj}^{\mathbb Z}\bigr)
\)
denotes the category of linear complexes of graded projective \(\Lambda\)-modules, that is, complexes whose \(n\)-th term is of the form
\[
K(M)^n
=
\bigoplus_{x\in Q_0}
P_x\langle n\rangle
\otimes_k
M_n(x).
\]
Similarly,
\( 
\mathcal{CLC}^{+}\bigl(\Lambda^{!}\textup{-Inj}^{\mathbb Z}\bigr)
\)
denotes the category of colinear complexes of graded injective \(\Lambda^{!}\)-modules, that is, complexes whose \(n\)-th term is of the form
\[
K^{\vee}(N)^n
=
\bigoplus_{x\in Q_0}
I_x^{!}\langle n\rangle
\otimes_k
N_n(x).
\]

By definition of the subcategories
\(\Lambda^{!}\textup{-GMod}^{-,b}\)
and
\(\Lambda\textup{-GMod}^{+,b}\),
the above equivalences restrict to equivalences
\[
K \colon
\Lambda^{!}\textup{-GMod}^{-,b}
\xrightarrow{\ \sim\ }
\mathcal{LC}^{-,b}\bigl(\Lambda\textup{-Proj}^{\mathbb Z}\bigr)
\]
and
\[
K^{\vee} \colon
\Lambda\textup{-GMod}^{+,b}
\xrightarrow{\ \sim\ }
\mathcal{CLC}^{+,b}\bigl(\Lambda^{!}\textup{-Inj}^{\mathbb Z}\bigr).
\]

The significance of these categories is illustrated by the finite-dimensional case. Indeed, it was shown in \cite[Proposition~3.4]{12} that, when \(\Lambda\) is finite-dimensional, one has
\( 
\Lambda^{!}\textup{-GMod}^{-,b}
=
\Lambda^{!}\textup{-Cop}^{\mathbb Z}.
\)
Dually,
\( 
\Lambda^{!}\textup{-GMod}^{+,b}
=
\Lambda^{!}\textup{-Pe}^{\mathbb Z}.
\)
Thus,
\(\Lambda^{!}\textup{-GMod}^{-,b}\)
and
\(\Lambda\textup{-GMod}^{+,b}\)
may be viewed as natural generalizations of the categories of finitely copresented and finitely presented graded modules, respectively.

\medskip 

The Koszul functor
\[
K \colon
\Lambda^{!}\textup{-GMod}^{-,b}
\xrightarrow{\ \sim\ }
\mathcal{LC}^{-,b}\bigl(\Lambda\textup{-Proj}^{\mathbb Z}\bigr)
\]
admits a natural extension to bounded complexes.

Applying \(K\) degreewise defines a \(k\)-linear functor
\[
\mathfrak{K}\colon
\mathsf{C}^{b}\!\bigl(\Lambda^{!}\textup{-}\mathrm{GMod}^{-,b}\bigr)
\longrightarrow
\mathsf{DC}^{b,-}\!\bigl(\Lambda\textup{-}\mathrm{Proj}^{\mathbb Z}\bigr),
\]
where
\(\mathsf{DC}^{b,-}\!\bigl(\Lambda\textup{-}\mathrm{Proj}^{\mathbb Z}\bigr)\)
denotes the category of double cochain complexes with terms in
\(\Lambda\textup{-}\mathrm{Proj}^{\mathbb Z}\), bounded in the horizontal
direction and right bounded in the vertical direction.

Let
\[
M^\bullet=
\bigl(
\cdots \longrightarrow M^{p-1}
\xrightarrow{\,d^{p-1}\,}
M^{p}
\xrightarrow{\,d^{p}\,}
M^{p+1}
\longrightarrow \cdots
\bigr)
\]
be a bounded cochain complex with
\(M^{p}\in \Lambda^{!}\textup{-}\mathrm{GMod}^{-,b}\).
For \(x\in Q_0\) and \(q\in\mathbb Z\), let
\(M^p_{x,q}\) denote the homogeneous component of internal degree \(q\) at
the vertex \(x\).

The associated double complex \(\mathfrak K(M^\bullet)\) has terms
\[
\mathfrak K(M^\bullet)^{p,q}
=
\bigoplus_{x\in Q_0}
P_x\langle q\rangle\otimes_k M^p_{x,q}.
\]

Its horizontal differential is induced by the differential of
\(M^\bullet\), whereas its vertical differential is induced by the action of
the arrows of \(Q^{\mathrm{op}}\). A typical portion of
\(\mathfrak K(M^\bullet)\) is therefore of the form
\[
\begin{array}{cccccc}
\cdots &
\longrightarrow &
\displaystyle\bigoplus_{y\in Q_0}
P_y\langle q+1\rangle\otimes_k M^p_{y,q+1}
&
\xrightarrow{\ d^{p,q+1}_1\ }
&
\displaystyle\bigoplus_{y\in Q_0}
P_y\langle q+1\rangle\otimes_k M^{p+1}_{y,q+1}
&
\longrightarrow \cdots
\\[1.4em]
&&
\uparrow\, d^{p,q}_2
&&
\uparrow\, d^{p+1,q}_2
&
\\[-0.2em]
\cdots &
\longrightarrow &
\displaystyle\bigoplus_{x\in Q_0}
P_x\langle q\rangle\otimes_k M^p_{x,q}
&
\xrightarrow{\ d^{p,q}_1\ }
&
\displaystyle\bigoplus_{x\in Q_0}
P_x\langle q\rangle\otimes_k M^{p+1}_{x,q}
&
\longrightarrow \cdots .
\end{array}
\]
The horizontal differential
\[
d^{p,q}_{1}\colon
\bigoplus_{x\in Q_{0}}
P_x\langle q\rangle\otimes M^p_{x,q}
\longrightarrow
\bigoplus_{x\in Q_{0}}
P_x\langle q\rangle\otimes M^{p+1}_{x,q}
\]
is induced by the differential of the complex \(M^\bullet\). Explicitly,
\[
d^{p,q}_{1}
=
\sum_{x\in Q_{0}}
\mathrm{id}_{P_x\langle q\rangle}
\otimes d^p_{x,q},
\]
where
\[
d^p_{x,q}\colon
M^p_{x,q}\longrightarrow M^{p+1}_{x,q}
\]
denotes the \((x,q)\)-component of the differential of \(M^\bullet\).

The vertical differential
\[
d^{p,q}_{2}\colon
\bigoplus_{x\in Q_{0}}
P_x\langle q\rangle\otimes M^p_{x,q}
\longrightarrow
\bigoplus_{y\in Q_{0}}
P_y\langle q+1\rangle\otimes M^p_{y,q+1}
\]
is determined by its \((y,x)\)-component
\[
(d^{p,q}_{2})_{yx}
=
\sum_{\alpha:y\to x}
P_\alpha\otimes M^p(\alpha^{\mathrm{op}}),
\]
where the sum ranges over all arrows \(\alpha\) of \(Q\) with
source \(y\) and target \(x\). Here
\[
P_\alpha\colon
P_x\langle q\rangle
\longrightarrow
P_y\langle q+1\rangle
\]
is the morphism induced by right multiplication by \(\alpha\),
namely \(P_\alpha(u)=u\alpha\), and
\[
M^p(\alpha^{\mathrm{op}})\colon
M^p_{x,q}
\longrightarrow
M^p_{y,q+1}
\]
is the corresponding structure morphism of the graded
\(\Lambda^!\)-module \(M^p\).

The identities
\[
d_1^2=0,
\qquad
d_2^2=0,
\qquad
d_1d_2=d_2d_1
\]
follow immediately from the fact that \(M^\bullet\) is a complex and from
the definition of the Koszul functor. Consequently,
\(\mathfrak K(M^\bullet)\) is a double cochain complex.

The double complex \(\mathfrak{K}(M^\bullet)\) gives rise to a total complex in
the usual way. More generally, let
\[
A=(A^{p,q},d_1,d_2)
\in
\mathsf{DC}^{b,-}\!\bigl(\Lambda\textup{-}\mathrm{Proj}^{\mathbb{Z}}\bigr).
\]
Its totalization is the complex
\[
\operatorname{Tot}(A)^n
=
\bigoplus_{p+q=n}A^{p,q},
\]
equipped with differential
\[
d_{\operatorname{Tot}}
=
d_1+(-1)^p d_2.
\]
Since \(d_1^2=d_2^2=0\) and \(d_1d_2=d_2d_1\), the differential
\(d_{\operatorname{Tot}}\) squares to zero. Consequently, totalization defines
a \(k\)-linear functor
\[
\operatorname{Tot}\colon
\mathsf{DC}^{b,-}\!\bigl(\Lambda\textup{-}\mathrm{Proj}^{\mathbb{Z}}\bigr)
\longrightarrow
\mathsf{C}^{-,b}\!\bigl(\Lambda\textup{-}\mathrm{Proj}^{\mathbb{Z}}\bigr).
\]

Composing \(\mathfrak{K}\) with \(\operatorname{Tot}\), we obtain a
\(k\)-linear functor
\[
\mathcal{F}
:=
\operatorname{Tot}\circ\mathfrak{K}
\colon
\mathsf{C}^{b}\!\bigl(\Lambda^{!}\textup{-}\mathrm{GMod}^{-,b}\bigr)
\longrightarrow
\mathsf{C}^{-,b}\!\bigl(\Lambda\textup{-}\mathrm{Proj}^{\mathbb{Z}}\bigr).
\]
 By construction,
\(\mathcal{F}\) extends the Koszul functor \(K\). Indeed, if
\(M\in\Lambda^{!}\textup{-}\mathrm{GMod}^{-,b}\) is viewed as a complex
concentrated in degree \(0\), then
\[
\mathcal{F}(M)=K(M).
\]
Accordingly, \(\mathcal{F}\) restricts to
\[
K\colon
\Lambda^{!}\textup{-}\mathrm{GMod}^{-,b}
\longrightarrow
\mathcal{LC}^{-,b}\!\bigl(\Lambda\textup{-}\mathrm{Proj}^{\mathbb{Z}}\bigr).
\]
The functor \(\mathcal{F}\) is compatible with homotopies, and therefore induces a \(k\)-linear triangulated functor
\[
\mathscr{F}\colon
\mathsf{K}^{b}\!\bigl(\Lambda^{!}\textup{-}\mathrm{GMod}^{-,b}\bigr)
\longrightarrow
\mathsf{K}^{-,b}\!\bigl(\Lambda\textup{-}\mathrm{Proj}^{\mathbb{Z}}\bigr).
\]
Since the category \(\Lambda\textup{-}\mathrm{GMod}^{+,b}\) has enough
projectives, every object of
\(\mathsf{D}^{b}\!\bigl(\Lambda\textup{-}\mathrm{GMod}^{+,b}\bigr)\)
admits a right bounded projective resolution with bounded cohomology. Hence
the canonical localization functor induces an equivalence
\[
\mathfrak{T}\colon
\mathsf{K}^{-,b}\!\bigl(\Lambda\textup{-}\mathrm{Proj}^{\mathbb{Z}}\bigr)
\xrightarrow{\ \sim\ }
\mathsf{D}^{b}\!\bigl(\Lambda\textup{-}\mathrm{GMod}^{+,b}\bigr).
\]
Moreover, \(\mathscr{F}\) sends acyclic complexes to acyclic complexes.

\medskip 

Hence
\(\mathscr{F}\) factors through the derived categories and induces a triangulated functor
\[
\mathfrak{F}\colon
\mathsf{D}^{b}\!\bigl(\Lambda^{!}\textup{-}\mathrm{GMod}^{-,b}\bigr)
\longrightarrow
\mathsf{D}^{b}\!\bigl(\Lambda\textup{-}\mathrm{GMod}^{+,b}\bigr).
\]

Thus the derived Koszul functor is given by
\[
\mathfrak{F}
=
\mathfrak{T}\circ\mathscr{F}.
\]

Equivalently, the construction is summarized by the commutative diagram of triangulated categories

\[
\xymatrix@C=5.5em@R=4em{
\mathsf{C}^{b}\!\bigl(\Lambda^{!}\textup{-}\mathrm{GMod}^{-,b}\bigr)
\ar[r]^-{\mathcal{F}} \ar[d]_{\scriptstyle \pi} &
\mathsf{C}^{-,b}\!\bigl(\Lambda\textup{-}\mathrm{Proj}^{\mathbb{Z}}\bigr)
\ar[d]^{\scriptstyle \pi'} \\
\mathsf{K}^{b}\!\bigl(\Lambda^{!}\textup{-}\mathrm{GMod}^{-,b}\bigr)
\ar[r]^-{\mathscr{F}} \ar[d]_{\scriptstyle \epsilon} &
\mathsf{K}^{-,b}\!\bigl(\Lambda\textup{-}\mathrm{Proj}^{\mathbb{Z}}\bigr)
\ar[d]^{\scriptstyle \mathfrak{T}} \\
\mathsf{D}^{b}\!\bigl(\Lambda^{!}\textup{-}\mathrm{GMod}^{-,b}\bigr)
\ar[r]^-{\mathfrak{F}} &
\mathsf{D}^{b}\!\bigl(\Lambda\textup{-}\mathrm{GMod}^{+,b}\bigr)
}
\]
Similarly, one constructs the dual  functor
\[
\mathfrak{G}\colon
\mathsf{D}^{b}\!\bigl(\Lambda\textup{-}\mathrm{GMod}^{+,b}\bigr)
\longrightarrow
\mathsf{D}^{b}\!\bigl(\Lambda^{!}\textup{-}\mathrm{GMod}^{-,b}\bigr).
\]

Note that by abuse of notation we denoted the induced functors by $\mathfrak{F}$ and $\mathfrak{G}$ even though they are slightly different from the Koszul duality stated in theorem 2.1.

\medskip 

Since \(\Lambda^{!}\textup{-}\mathrm{GMod}^{-,b}\) has enough injectives and
\(\Lambda\textup{-}\mathrm{GMod}^{+,b}\) has enough projectives, it follows
from \cite[Proposition~2.14]{12} that
\(\mathsf{D}^{b}\!\bigl(\Lambda^{!}\textup{-}\mathrm{GMod}^{-,b}\bigr)\)
 is a full triangulated subcategory of
\(\mathsf{D}^{\downarrow}\!\bigl(\Lambda^{!}\textup{-}\mathrm{GMod}^{-}\bigr)\),
and that
\(\mathsf{D}^{b}\!\bigl(\Lambda\textup{-}\mathrm{GMod}^{+,b}\bigr)\)
is a full triangulated subcategory of
\(\mathsf{D}^{\uparrow}\!\bigl(\Lambda\textup{-}\mathrm{GMod}^{+}\bigr)\).

\medskip 
We are now ready to prove the main result of this section, which we refer to
as the bounded derived Koszul duality. It extends
\cite[Theorem~3.8]{12} to infinite-dimensional Koszul algebras.

\begin{theorem}
The Koszul duality functors induce mutually inverse triangulated equivalences
\[
\begin{tikzcd}[column sep=huge]
\mathsf{D}^{b}\!\bigl(\Lambda^{!}\textup{-}\mathrm{Acl}^{\mathbb{Z}}\bigr)
\arrow[r, bend left=35, "\mathfrak{F}"]
&
\mathsf{D}^{b}\!\bigl(\Lambda\textup{-}\mathrm{Al}^{\mathbb{Z}}\bigr)
\arrow[l, bend left=35, "\mathfrak{G}"']
\end{tikzcd}
\]
Moreover, as triangulated categories,
\(\mathsf{D}^{b}\!\bigl(\Lambda^{!}\textup{-}\mathrm{Acl}^{\mathbb{Z}}\bigr)\)
is generated by colinear \(\Lambda^{!}\)-modules, and
\(\mathsf{D}^{b}\!\bigl(\Lambda\textup{-}\mathrm{Al}^{\mathbb{Z}}\bigr)\)
is generated by linear \(\Lambda\)-modules.
\end{theorem}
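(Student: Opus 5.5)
The plan is to realize the claimed equivalence as a restriction of the bounded functors \(\mathfrak{F}\) and \(\mathfrak{G}\) constructed above between \(\mathsf{D}^{b}(\Lambda^{!}\textup{-GMod}^{-,b})\) and \(\mathsf{D}^{b}(\Lambda\textup{-GMod}^{+,b})\). These are full triangulated subcategories of \(\mathsf{D}^{\downarrow}\) and \(\mathsf{D}^{\uparrow}\) by \cite[Proposition~2.14]{12}. Since \(\mathfrak{F},\mathfrak{G}\) there are quasi-inverse by Theorem~2.1, it suffices to check three things:
\begin{enumerate}[label=(\roman*)]
\item \(\Lambda^{!}\textup{-Acl}^{\mathbb Z}\subseteq\Lambda^{!}\textup{-GMod}^{-,b}\) and \(\Lambda\textup{-Al}^{\mathbb Z}\subseteq\Lambda\textup{-GMod}^{+,b}\), with \(\mathsf{D}^{b}\) of the smaller exact categories embedding fully faithfully;
\item the essential image of \(\mathsf{D}^{b}(\Lambda^{!}\textup{-Acl}^{\mathbb Z})\) under \(\mathfrak{F}\) lies in \(\mathsf{D}^{b}(\Lambda\textup{-Al}^{\mathbb Z})\), and dually for \(\mathfrak{G}\);
\item the generation statements hold.
\end{enumerate}
Full faithfulness of the bounded derived categories of almost (co)linear modules inside the ambient ones will follow from their having enough projectives (resp.\ injectives) together with the two-out-of-three property. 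By the cited result of \cite{12}, both are then computed by \(\mathsf{K}^{-,b}(\Lambda\textup{-Proj}^{\mathbb Z})\) (resp.\ \(\mathsf{K}^{+,b}(\Lambda^{!}\textup{-Inj}^{\mathbb Z})\)).

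The key step is a d\'evissage of an almost colinear module. Let \(M\in\Lambda^{!}\textup{-Acl}^{\mathbb Z}\) and choose \(r\) with \(M_{\le r}\langle r\rangle\) coKoszul. There is the short exact sequence
\[
0\to M_{>r}\to M\to M_{\le r}\to 0 .
\]
Here \(M\) is right bounded and locally finite-dimensional over a finite quiver, so \(M_{>r}\) is finite-dimensional. It therefore has a finite filtration with subquotients \(S^{!}_x\langle i\rangle\), each of which is colinear. Hence \(M\) lies in the thick (indeed triangulated) subcategory generated by colinear modules. This gives the generation claim on the \(\Lambda^{!}\) side, and dually \(0\to N_{\ge r}\to N\to N_{<r}\to 0\) gives it on the \(\Lambda\) side. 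The exact categories are extension closed, so every bounded complex is built from its terms by cones, and this suffices.

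Next I use Proposition~3.1 together with the shift rule of Proposition~2.2, \(\mathfrak{F}(X\langle i\rangle)=\mathfrak{F}(X)\langle -i\rangle[i]\). These show that \(\mathfrak{F}\) sends colinear modules to shifted linear modules, which are almost linear, and that it sends \(S^{!}_x\langle i\rangle\) to \(P_x\langle i\rangle\), which is linear. Since \(\mathfrak{F}\) is triangulated and \(\mathsf{D}^{b}(\Lambda\textup{-Al}^{\mathbb Z})\) is a triangulated subcategory of \(\mathsf{D}^{\uparrow}\), it follows that \(\mathfrak{F}\) maps \(\mathsf{D}^{b}(\Lambda^{!}\textup{-Acl}^{\mathbb Z})\) into \(\mathsf{D}^{b}(\Lambda\textup{-Al}^{\mathbb Z})\). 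Dually, \(\mathfrak{G}\) sends linear modules to shifted colinear ones and \(S_x\langle i\rangle\) to \(I^{!}_x\langle i\rangle\). The latter is colinear, since \(I^{!}_x\) is its own colinear injective resolution. Because \(\mathfrak{F}\) and \(\mathfrak{G}\) are quasi-inverse on the ambient categories, their restrictions to these full subcategories are quasi-inverse as well.

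I expect the main obstacle to be step (i), namely verifying that \(\Lambda\textup{-Al}^{\mathbb Z}\) is an exact category and that its \(\mathsf{D}^{b}\) agrees with the full subcategory of \(\mathsf{D}^{\uparrow}\). This needs:
\begin{itemize}
\item enough projectives: the syzygy of an almost linear module along a finitely generated projective cover is again almost linear, after raising the truncation degree by one;
\item closure under extensions and kernels of epimorphisms: a linear truncation persists after increasing \(r\), because \(N_{\ge r+1}\langle r+1\rangle\) is again Koszul (the first syzygy of a Koszul module is Koszul);
\item boundedness of the cohomology of \(K^{\vee}(N)\) for an almost linear \(N\), i.e.\ \(N\in\Lambda\textup{-GMod}^{+,b}\).
\end{itemize}
The first two points I would handle by taking a common \(r\) for the outer terms. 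For the third, I would reduce via the same d\'evissage to the linear and finite-dimensional pieces, where Proposition~3.1 and Proposition~2.2 give concentration in a single degree.
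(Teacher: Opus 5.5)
Your argument is correct, but it is organized differently from the paper's proof.

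\textbf{The paper's route.} The proof rests on an identification. Because $K(M_{\le r})$ is the brutal truncation $\sigma_{\le r}K(M)$, the complex $K(M)$ has bounded cohomology exactly when some truncation $M_{\le r}\langle r\rangle$ has $K$ concentrated in a single degree. By Proposition~3.1 this means $M_{\le r}\langle r\rangle$ is coKoszul. Hence $\Lambda^{!}\textup{-GMod}^{-,b}=\Lambda^{!}\textup{-Acl}^{\mathbb Z}$, and dually $\Lambda\textup{-GMod}^{+,b}=\Lambda\textup{-Al}^{\mathbb Z}$. The theorem is then a relabeling of the bounded equivalence constructed just before it, and generation is checked afterwards.

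\textbf{Your route.} You never establish this equality. You prove one inclusion, which your argument does not actually need once you embed directly into $\mathsf{D}^{\downarrow}$ and $\mathsf{D}^{\uparrow}$. Instead you prove generation first, by d\'evissage. You then apply Proposition~3.1 to the generators to show that the ambient functors of Theorem~2.1 carry $\mathsf{D}^{b}(\Lambda^{!}\textup{-Acl}^{\mathbb Z})$ into $\mathsf{D}^{b}(\Lambda\textup{-Al}^{\mathbb Z})$ and back.

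\textbf{What each buys.} The paper's route is shorter and gives an intrinsic characterization of almost (co)linearity. That characterization makes two-out-of-three immediate from the long exact cohomology sequence of $0\to K^{\vee}(N')\to K^{\vee}(N)\to K^{\vee}(N'')\to 0$; your step (i) is the price of not having it. Your route supplies an explicit reason why the functors preserve the bounded subcategories, which the paper simply attributes to Theorem~2.1. It also avoids the converse direction of Proposition~3.1. Your generation argument, via composition series whose simple factors are colinear by Proposition~2.2, is more direct than the paper's detour through bounded complexes of projectives.

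\textbf{A small correction in step (i).} $N_{\ge r+1}$ is not a syzygy of $N_{\ge r}$. The correct argument uses the sequence $0\to N_{\ge r+1}\to N_{\ge r}\to N_r\to 0$ with $N_r$ semisimple, the long exact $\operatorname{Ext}(-,S_x\langle -i\rangle)$-sequence, and the degree bound: if $L$ lives in degrees $\ge s$, then $\operatorname{Ext}^{n}(L,S_x\langle -i\rangle)=0$ for $i<n+s$. The same reasoning handles cokernels of monomorphisms, which two-out-of-three also requires and which your list omits.
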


\begin{proof}
By Theorem~2.1, the Koszul duality functors restrict to mutually inverse
triangulated equivalences
\[
\begin{tikzcd}[column sep=huge]
\mathsf{D}^{b}\!\bigl(\Lambda^{!}\textup{-}\mathrm{GMod}^{-,b}\bigr)
\arrow[r, bend left=35, "\mathfrak{F}"]
&
\mathsf{D}^{b}\!\bigl(\Lambda\textup{-}\mathrm{GMod}^{+,b}\bigr)
\arrow[l, bend left=35, "\mathfrak{G}"']
\end{tikzcd}
\]
It remains to identify the categories
\(\Lambda^{!}\textup{-}\mathrm{GMod}^{-,b}\) and
\(\Lambda\textup{-}\mathrm{GMod}^{+,b}\) with the categories
\(\Lambda^{!}\textup{-Acl}^{\mathbb Z}\) and
\(\Lambda\textup{-Al}^{\mathbb Z}\), respectively.

We prove the first identification; the second is dual. Let
\(M\in\Lambda^{!}\textup{-}\mathrm{GMod}^{-,b}\). By definition,
\(M\) is right bounded and the complex \(K(M)\) has bounded cohomology.
Hence there exists an integer \(N\) such that
\[
H^r(K(M))=0
\qquad
\text{for all } r>N.
\]
By Proposition~3.1, this is equivalent to saying that the truncation
\(M_{\leq N}\) is colinear. Therefore \(M\) is almost colinear, and so
\[
\Lambda^{!}\textup{-}\mathrm{GMod}^{-,b}
=
\Lambda^{!}\textup{-Acl}^{\mathbb Z}.
\]

The dual argument gives
\[
\Lambda\textup{-}\mathrm{GMod}^{+,b}
=
\Lambda\textup{-Al}^{\mathbb Z}.
\]
Hence, the above derived Koszul duality gives
the desired equivalence
\[
\mathsf{D}^{b}\!\bigl(\Lambda^{!}\textup{-Acl}^{\mathbb Z}\bigr)
\simeq
\mathsf{D}^{b}\!\bigl(\Lambda\textup{-Al}^{\mathbb Z}\bigr).
\]
This completes the proof of the equivalence. It remains to justify the final assertion on generators. Let
\(M\in \Lambda^{!}\textup{-}\mathrm{Acl}^{\mathbb{Z}}\). By definition, there
exists an integer \(n\) such that \(M_{\leq n}\) is colinear. We have a short
exact sequence
\[
0
\longrightarrow
M_{>n}
\longrightarrow
M
\longrightarrow
M/M_{>n}
\longrightarrow
0 .
\]
Here \(M_{>n}\) is finite-dimensional, and
\(M/M_{>n}\cong M_{\leq n}\) is colinear. Under the bounded Koszul duality,
finite-dimensional graded modules are sent to bounded complexes of projective
\(\Lambda\)-modules. Hence \(M_{>n}\) belongs to the triangulated subcategory
generated by colinear \(\Lambda^{!}\)-modules. Since \(M/M_{>n}\) is colinear,
the above short exact sequence shows that \(M\) belongs to the triangulated
subcategory of
\(\mathsf{D}^{b}\!\bigl(\Lambda^{!}\textup{-}\mathrm{Acl}^{\mathbb{Z}}\bigr)\)
generated by colinear \(\Lambda^{!}\)-modules.. This
proves the final assertion.
\end{proof}

When \(\Lambda\) is finite-dimensional, the preceding theorem recovers the
graded derived Koszul duality of \cite[Theorem~3.8]{12}.

\begin{corollary}
Let \(\Lambda\) be a finite-dimensional Koszul algebra. Then
Theorem~3.2 specializes to a triangulated equivalence
\[
\begin{tikzcd}[column sep=huge]
\mathsf{D}^{b}\!\bigl(\Lambda^{!}\textup{-Cop}^{\mathbb{Z}}\bigr)
\arrow[r, bend left=35, "\mathfrak{F}"]
&
\mathsf{D}^{b}\!\bigl(\Lambda\textup{-gmod}\bigr)
\arrow[l, bend left=35, "\mathfrak{G}"']
\end{tikzcd}
\]
and, dually, to a triangulated equivalence
\[
\begin{tikzcd}[column sep=huge]
\mathsf{D}^{b}\!\bigl(\Lambda\textup{-gmod}\bigr)
\arrow[r, bend left=35, "\mathfrak{F}"]
&
\mathsf{D}^{b}\!\bigl(\Lambda^{!}\textup{-Pe}^{\mathbb{Z}}\bigr)
\arrow[l, bend left=35, "\mathfrak{G}"']
\end{tikzcd}
\]
\end{corollary}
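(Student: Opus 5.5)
The plan is to deduce Corollary 3.3 from Theorem 3.2 by identifying, for finite-dimensional \(\Lambda\), the four categories involved: \(\Lambda^{!}\textup{-Acl}^{\mathbb Z}\), \(\Lambda\textup{-Al}^{\mathbb Z}\), and their dual counterparts. The first step uses what was established in the proof of Theorem 3.2: \(\Lambda^{!}\textup{-}\mathrm{GMod}^{-,b}=\Lambda^{!}\textup{-Acl}^{\mathbb Z}\) and \(\Lambda\textup{-}\mathrm{GMod}^{+,b}=\Lambda\textup{-Al}^{\mathbb Z}\). Combined with \cite[Proposition~3.4]{12}, which gives \(\Lambda^{!}\textup{-}\mathrm{GMod}^{-,b}=\Lambda^{!}\textup{-Cop}^{\mathbb Z}\) when \(\Lambda\) is finite-dimensional, this yields \(\Lambda^{!}\textup{-Acl}^{\mathbb Z}=\Lambda^{!}\textup{-Cop}^{\mathbb Z}\). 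The same argument with the roles of \(K\) and \(K^{\vee}\) interchanged, applied to the dual Koszul pair, gives the perfect case \(\Lambda^{!}\textup{-Pe}^{\mathbb Z}\).

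The second step is to show that \(\Lambda\textup{-Al}^{\mathbb Z}=\Lambda\textup{-gmod}\) when \(\Lambda\) is finite-dimensional. I would argue both inclusions directly.

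\emph{Finite-dimensional modules are almost linear.} Let \(M\) be finite-dimensional and choose \(r\) larger than the top degree of \(M\). Then \(M_{\geq r}=0\), which is trivially Koszul (an empty resolution).

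\emph{Almost linear modules are finite-dimensional.} Let \(M\) be almost linear. Then \(M_{\geq r}\) is generated in degree \(r\) and admits a linear resolution. Since every object of \(\Lambda\textup{-GMod}^{+}\) in \(\mathsf{GMod}^{+,b}\) has \(K^{\vee}(M)\) with bounded cohomology, I would use the finite generation implicit in the linear resolution. By the finiteness convention on the multiplicities \(\alpha_{x,n}\), \(P^0\) is a finite sum of \(P_x\langle r\rangle\), and each such summand is finite-dimensional because \(\Lambda\) is. So \(M_{\geq r}\) is finite-dimensional. The quotient \(M_{<r}\) is finite-dimensional by local finiteness together with left boundedness of \(M\). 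Hence \(M\) is finite-dimensional.

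With these identifications, Theorem 3.2 gives the first displayed equivalence. For the second, I would apply Theorem 3.2 with \(\Lambda\) replaced by \(\Lambda^{!}\) and \(\Lambda^{!}\) by \(\Lambda^{!!}\cong\Lambda\). Alternatively, I would use the dual (\(\uparrow/\downarrow\) swapped) version of Koszul duality pairing \(\Lambda\textup{-GMod}^{-}\) with \(\Lambda^{!}\textup{-GMod}^{+}\). Here the almost colinear finite-dimensional side becomes \(\Lambda\textup{-gmod}\) by the symmetric argument using injective modules, and the other side becomes \(\Lambda^{!}\textup{-Pe}^{\mathbb Z}\) by \cite[Proposition~3.4]{12}.

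The main obstacle I expect is the bookkeeping in this dual version. Theorem 3.2 as stated only pairs \(\Lambda^{!}\)-right-bounded with \(\Lambda\)-left-bounded modules. For the second equivalence one must check that the symmetric construction (with \(K^{\vee}\) landing in colinear complexes of injectives) behaves identically. One must also check that the membership \(\Lambda\textup{-}\mathrm{GMod}^{+,b}\subseteq\Lambda\textup{-Fg}^{\mathbb Z}\), used above, actually holds. That is, bounded cohomology of \(K^{\vee}(M)\) must force finitely many generators in the linear truncation. If it does not, I would instead invoke the characterization of \(\mathrm{GMod}^{+,b}\) from \cite[Proposition~3.4]{12} directly.
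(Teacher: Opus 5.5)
Your proposal is correct and follows essentially the paper's route: identify \(\Lambda^{!}\textup{-Acl}^{\mathbb Z}\) with \(\Lambda^{!}\textup{-Cop}^{\mathbb Z}\) and \(\Lambda\textup{-Al}^{\mathbb Z}\) with \(\Lambda\textup{-gmod}\), then apply Theorem~3.2 (for the dual statement, to the pair \((\Lambda^{!},\Lambda^{!!}\cong\Lambda)\)); the only difference is that the paper gets \(\Lambda^{!}\textup{-Acl}^{\mathbb Z}=\Lambda^{!}\textup{-Cop}^{\mathbb Z}\) from the finite global dimension of \(\Lambda^{!}\) (\cite[Theorem~2.6]{12}), whereas you combine \cite[Proposition~3.4]{12} with \(\Lambda^{!}\textup{-}\mathrm{GMod}^{-,b}=\Lambda^{!}\textup{-Acl}^{\mathbb Z}\). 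The obstacle you raise about \(\Lambda\textup{-}\mathrm{GMod}^{+,b}\subseteq\Lambda\textup{-Fg}^{\mathbb Z}\) is not an issue: \(M_{\geq r}\) is generated by \(M_r\), which is finite-dimensional by local finiteness over the finite quiver \(Q\), so \(M_{\geq r}\) is finite-dimensional once \(\Lambda\) is.
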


\begin{proof}
Assume that \(\Lambda\) is finite-dimensional. By
\cite[Theorem~2.6]{12}, its Koszul dual \(\Lambda^{!}\) has finite global
dimension. In this case, the category of almost colinear
\(\Lambda^{!}\)-modules coincides with the category of coperfect
\(\Lambda^{!}\)-modules. Moreover, since \(\Lambda\) is finite-dimensional,
the category of almost linear \(\Lambda\)-modules coincides with
\(\Lambda\textup{-gmod}\), the category of finite-dimensional graded
\(\Lambda\)-modules. Therefore Theorem~3.2 gives
\[
\mathsf{D}^{b}\!\bigl(\Lambda^{!}\textup{-Cop}^{\mathbb{Z}}\bigr)
\simeq
\mathsf{D}^{b}\!\bigl(\Lambda\textup{-gmod}\bigr),
\]
as claimed.
\end{proof}
We next record a singular version of the bounded derived Koszul duality. This
generalizes the graded singular Koszul duality of
\cite[Theorem~3.16]{12}.

\begin{corollary}
The Koszul duality induces a triangulated equivalence
\[
\begin{tikzcd}[column sep=huge]
\mathsf{D}^{b}\!\bigl(
\Lambda^{!}\textup{-Acl}^{\mathbb Z}
/\Lambda^{!}\textup{-}\mathrm{gmod}
\bigr)
\arrow[r, bend left=35, "\mathfrak{F}"]
&
\mathsf{D}_{\mathrm{sg}}\!\bigl(
\Lambda\textup{-Al}^{\mathbb Z}
\bigr)
\arrow[l, bend left=35, "\mathfrak{G}"']
\end{tikzcd}
\]
\end{corollary}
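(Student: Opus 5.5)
The plan is to deduce this from Theorem~3.2 by passing to Verdier quotients on both sides and matching the subcategories being factored out. On the \(\Lambda^{!}\)-side, take \(\mathcal{B}=\Lambda^{!}\textup{-Acl}^{\mathbb Z}=\Lambda^{!}\textup{-GMod}^{-,b}\) and \(\mathcal{A}=\Lambda^{!}\textup{-gmod}\). The preliminaries record that \(\mathcal{B}\) is weakly idempotent complete and satisfies two-out-of-three in \(\Lambda^{!}\textup{-GMod}^{-}\), and that \(\mathcal{A}\) is a Serre subcategory of \(\mathcal{B}\). So \cite[Theorem~2.18]{12} gives
\[
\mathsf{D}^{b}(\mathcal{B}/\mathcal{A})\simeq \mathsf{D}^{b}(\mathcal{B})/\mathsf{D}^{b}_{\mathcal{A}}(\mathcal{B}).
\]
On the \(\Lambda\)-side, by definition,
\[
\mathsf{D}_{\mathrm{sg}}(\Lambda\textup{-Al}^{\mathbb Z})=\mathsf{D}^{b}(\Lambda\textup{-Al}^{\mathbb Z})/\mathsf{K}^{b}(\Lambda\textup{-Proj}^{\mathbb Z}).
\]
A triangle equivalence descends to Verdier quotients once it identifies the thick subcategories being killed. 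The whole proof therefore reduces to showing that the equivalence \(\mathfrak{F}\) of Theorem~3.2 restricts to an equivalence
\[
\mathsf{D}^{b}_{\Lambda^{!}\textup{-gmod}}(\Lambda^{!}\textup{-Acl}^{\mathbb Z})\xrightarrow{\sim}\mathsf{K}^{b}(\Lambda\textup{-Proj}^{\mathbb Z}),
\]
where the right side sits inside \(\mathsf{D}^{b}(\Lambda\textup{-Al}^{\mathbb Z})\) via \(\mathfrak{T}\).

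\emph{Step 1.} First I would show that \(\mathsf{D}^{b}_{\mathcal A}(\mathcal B)\) is the thick (indeed triangulated) subcategory generated by the shifted simples \(S^{!}_x\langle i\rangle\). A complex with finite-dimensional cohomology is built by finitely many triangles from its cohomology modules, using the truncation triangles available in \(\mathsf{D}^{b}(\mathcal B)\) because \(\mathcal{A}\) is Serre and \(\mathcal B\) is closed under kernels and cokernels of the relevant maps. Each finite-dimensional graded module in turn has a finite filtration with simple subquotients. Conversely, every simple is finite-dimensional and hence lies in \(\mathcal A\).

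\emph{Step 2.} Next, \(\mathsf{K}^{b}(\Lambda\textup{-Proj}^{\mathbb Z})\) is the triangulated subcategory generated by the \(P_x\langle i\rangle\), since brutal truncation builds any bounded complex of projectives from its terms. By Proposition~2.2, \(\mathfrak{F}(S^{!}_x\langle i\rangle)=P_x\langle i\rangle\). I must check that this still holds for the restricted functor \(\mathfrak{F}=\mathfrak{T}\circ\mathscr{F}\). This is immediate: \(K(S^{!}_x\langle i\rangle)\) is the single projective \(P_x\langle i\rangle\) in one cohomological degree (up to the shift convention of Proposition~2.2), which lies in \(\mathsf{K}^{b}\). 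Since \(\mathfrak{F}\) is a triangle equivalence, it maps the triangulated subcategory generated by a set onto the triangulated subcategory generated by the image set, up to isomorphism closure. Steps 1 and 2 together give the required restriction.

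\emph{Step 3.} Finally, I would conclude via the universal property of Verdier quotients. \(\mathfrak F\) and \(\mathfrak G\) induce mutually quasi-inverse triangle functors between the quotients, because they preserve the respective thick subcategories; \(\mathfrak G\) does so because it is quasi-inverse to \(\mathfrak F\). Composing with the equivalence from \cite[Theorem~2.18]{12} yields the stated equivalence.

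The main obstacle is Step 1, specifically the dévissage inside the exact category \(\mathcal B\) rather than an abelian category. One has to be sure that the cohomology of a complex in \(\mathsf{D}^{b}(\mathcal B)\), computed in \(\Lambda^{!}\textup{-GMod}^{-}\), lies in \(\mathcal B\), and that the truncation triangles exist in \(\mathsf{D}^{b}(\mathcal B)\). My first attempt would use two-out-of-three to show that the kernels and images of differentials stay in \(\mathcal B\) when the cohomology is finite-dimensional. Alternatively, I would transport the question through \(\mathfrak{F}\): a complex with finite-dimensional cohomology corresponds under Koszul duality to a complex in \(\mathsf{K}^{-,b}(\Lambda\textup{-Proj}^{\mathbb Z})\) that is homotopy equivalent to a bounded one.
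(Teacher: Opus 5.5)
Your proposal is correct and follows essentially the same route as the paper. The paper's proof defers to \cite[Theorem~3.16]{12}: it observes that Koszul duality carries the finite-dimensional part \(\mathsf{D}^{b}_{\Lambda^{!}\textup{-gmod}}(\Lambda^{!}\textup{-Acl}^{\mathbb Z})\) onto the perfect complexes, then passes to the Verdier quotients, identified via \cite[Theorem~2.18]{12}. Your d\'evissage by simples (with two-out-of-three keeping the truncations inside \(\Lambda^{!}\textup{-Acl}^{\mathbb Z}\)) and your computation that \(K(S^{!}_x\langle i\rangle)\) is the single projective \(P_x\langle -i\rangle\) in cohomological degree \(-i\) supply the details the paper leaves implicit.
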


\begin{proof}
The proof is identical to the proof of \cite[Theorem~3.16]{12}. Indeed, under
the graded derived Koszul duality, the bounded derived category of
finite-dimensional graded \(\Lambda^{!}\)-modules is sent to the full
subcategory of perfect complexes in
\(\mathsf{D}^{b}\!\bigl(\Lambda\textup{-}\mathrm{GMod}^{+,b}\bigr)\).
Therefore the equivalence descends to the corresponding Verdier quotients,
giving
\[
\mathsf{D}^{b}\!\bigl(
\Lambda^{!}\textup{-}\mathrm{GMod}^{-,b}
/\Lambda^{!}\textup{-}\mathrm{gmod}
\bigr)
\simeq
\mathsf{D}_{\mathrm{sg}}\!\bigl(
\Lambda\textup{-}\mathrm{GMod}^{+,b}
\bigr).
\]
\end{proof}
\subsection{Bounded Koszul duality for Quadratic Monomial Algebras}
We now specialize Theorem~3.2 to quadratic monomial algebras and show that the resulting bounded derived Koszul duality takes a particularly well-behaved form. The key point is that every finitely presented graded module over a quadratic monomial algebra is almost linear and, dually, every finitely copresented graded module is almost colinear.

\medskip

Recall that the Koszul dual \(\Lambda^{!}\) of a quadratic monomial algebra
\(\Lambda\) is again quadratic monomial. Furthermore, quadratic monomial
algebras are coherent and cocoherent; see \cite[Theorem~3.3]{14}.

\medskip 

The following lemma extends to quadratic monomial algebras the almost-linearity
properties known for commutative noetherian Koszul algebras~\cite{3} and for
noncommutative noetherian Koszul algebras with a balanced dualizing
complex~\cite{29}.

\begin{lemma}
Assume that \(\Lambda\) is a quadratic monomial algebra. Then every finitely
presented graded \(\Lambda\)-module is almost linear. Dually, every finitely
copresented graded \(\Lambda\)-module is almost colinear.
\end{lemma}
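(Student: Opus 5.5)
The plan is to prove the first assertion directly and obtain the second by duality. The functor \(D=\operatorname{Hom}_k(-,k)\) exchanges finitely copresented graded left \(\Lambda\)-modules with finitely presented graded right \(\Lambda\)-modules, that is, with left modules over \(\Lambda^{\mathrm{op}}\). Since \(\Lambda^{\mathrm{op}}\) is again quadratic monomial, and \(D\) turns linear projective resolutions into colinear injective ones, the second assertion will follow from the first applied to \(\Lambda^{\mathrm{op}}\).

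So let \(M\in\Lambda\textup{-Fp}^{\mathbb Z}\), and fix a presentation \(P^{-1}\to P^{0}\to M\to 0\) with \(P^{0},P^{-1}\) finitely generated. I will use the following reduction. Choose \(r\) larger than every generator degree of \(P^{0}\) and \(P^{-1}\). Then \(M_{\geq r}\) is generated in degree \(r\), because \(P^0\) is generated in degrees \(<r\) and \(\Lambda\) is generated in degree \(1\). Let \(P\to M_{\geq r}\) be its projective cover, which is generated in degree \(r\), and let \(\Omega\) be its kernel. By minimality, \(\Omega\) lives in degrees \(\geq r+1\). Since \(\Lambda\) is coherent by \cite[Theorem~3.3]{14}, \(M_{\geq r}\) is finitely presented, so \(\Omega\) is finitely generated. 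It therefore suffices to show that, for \(r\gg0\), the module \(\Omega\) is generated in degree \(r+1\) and \(\Omega\langle r+1\rangle\) is Koszul. Indeed, splicing a linear resolution of \(\Omega\) onto \(P\) then gives a linear resolution of \(M_{\geq r}\), so \(M_{\geq r}\langle r\rangle\) is Koszul.

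The key combinatorial input is a structural lemma for monomial algebras. First, for a path \(p\) of length \(\ell\) ending at a vertex \(x\), the left ideal \(\Lambda p\) is isomorphic to \(\Lambda e_x/\sum\Lambda\alpha\), shifted by \(\ell\), where \(\alpha\) ranges over the arrows with \(\alpha\beta\in I\) for \(\beta\) the last arrow of \(p\). Each \(\Lambda\alpha\) is again of this form, so iterating this description produces a linear resolution, and \(\Lambda p\langle \ell\rangle\) is Koszul. Second, a truncated projective decomposes as \(P_{x,\geq r}=\bigoplus_{|p|=r}\Lambda p\), because distinct nonzero paths are linearly independent in a monomial algebra. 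Hence \(P_{x,\geq r}\langle r\rangle\) is Koszul.

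The main step, and the one I expect to be the real obstacle, is a Gröbner-type statement. Let \(U\subseteq F\) be a finitely generated graded submodule of a finitely generated graded projective module \(F\). I claim that for \(s\gg0\), the truncation \(U_{\geq s}\) is a direct sum of modules of the form \(\Lambda u\), where each \(u\) is homogeneous of degree \(s\) and \(\Lambda u\cong\Lambda p\) for a suitable path \(p\) (for instance a tip of \(u\)). My first attempt will be to fix an admissible order on paths and run a reduction argument on tips, as for monomial algebras in Green's theory. Left multiplication by an arrow either kills a path or extends it, and it never mixes distinct paths. So after passing to a high degree, one expects to choose generators with distinct tips, such that the left annihilator of each generator equals the annihilator of its tip. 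The finiteness needed here comes from coherence: the tip set of \(U\) should stabilize at a bounded degree, since only finitely many "last arrows" and vertices occur. Granting the claim, apply it with \(U=\Omega\) (taking \(r\) large enough, so that \(\Omega=\Omega_{\geq r+1}\) is of the required form). This shows that \(\Omega\) is generated in degree \(r+1\) and that \(\Omega\langle r+1\rangle\) is a direct sum of Koszul modules \(\Lambda p\langle r+1\rangle\), hence Koszul, which completes the argument.
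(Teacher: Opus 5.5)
Most of your set-up is correct: the choice of \(r\), the generation of \(M_{\geq r}\) in degree \(r\), the description of \(\Lambda p\) with its iterated linear resolution (this is \cite[Proposition~3.2]{14}, which the paper also uses), the splitting \(P_{x,\geq r}=\bigoplus_{|p|=r}\Lambda p\), and the reduction of the dual statement via \(D\). The gap is the Gr\"obner-type claim. It carries the entire argument, yet you support it only by heuristics (``one expects'', ``should stabilize'').

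The claim is in fact true once \(s\) exceeds the generator degrees of \(U\) and \(F\), but not for the reason you suggest. For \(u=\sum_q c_q q\) one has \(\operatorname{ann}(u)=\bigcap_q\operatorname{ann}(q)\), and \(\operatorname{ann}(q)\) depends only on the last arrow of \(q\). So choosing generators with distinct tips does not make \(\operatorname{ann}(u)\) equal to the annihilator of its tip, and coherence plays no role.

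More seriously, your application of the claim is invalid even if the claim is granted. The threshold \(s\) depends on \(U\), while \(U=\Omega\) changes with \(r\), so ``taking \(r\) large'' does not give \(s=r+1\) for \(\Omega\). Knowing only that \(\Omega\) is generated in degree \(r+1\) would not suffice either. Take the quadratic monomial algebra with arrows \(\alpha,\beta,\gamma\colon x\to y\) and \(\delta,\delta',\delta''\colon y\to z\), and relations \(\delta\alpha=\delta\gamma=\delta'\beta=\delta'\gamma=\delta''\alpha=\delta''\beta=0\). The module \(U=\Lambda(\alpha+\beta)+\Lambda(\beta+\gamma)\subseteq\Lambda e_x\) is the kernel of the projective cover of \(\Lambda e_x/U\) and is generated in degree \(1\). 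Yet it is not a direct sum of cyclic submodules: every nonzero \(u\in U_1\) has \(\dim\Lambda_1 u\geq 2\), whereas \(\dim U_1=2\) and \(\dim U_2=3\).

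The missing idea is to apply the last-arrow observation to the fixed module \(U=\operatorname{Im}(P^{-1}\to P^{0})\) rather than to \(\Omega\). With your \(r\), every basis path of \(P^0_r\) has length at least \(1\). Let \(V_\epsilon\subseteq P^0_r\) be spanned by those paths with last arrow \(\epsilon\). Put \(\overline{\Lambda}_\epsilon=\Lambda e_z/\operatorname{ann}(\epsilon)\cong\Lambda\epsilon\langle 1\rangle\), where \(z\) is the target of \(\epsilon\). Then \(P^0_{\geq r}=\bigoplus_\epsilon\Lambda V_\epsilon\), and the map \(\bar\lambda\otimes v\mapsto\lambda v\) gives \(\overline{\Lambda}_\epsilon\otimes_k V_\epsilon\cong\Lambda V_\epsilon\), with \(V_\epsilon\) placed in degree \(r\).

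Since \(U\) is generated in degrees at most \(r-1\), \(U_{\geq r}\) is generated by \(U_r=\bigoplus_\epsilon W_\epsilon\), where \(W_\epsilon=\epsilon U_{r-1}\subseteq V_\epsilon\). Hence \(U_{\geq r}\) corresponds to \(\bigoplus_\epsilon\overline{\Lambda}_\epsilon\otimes_k W_\epsilon\). It follows that \(M_{\geq r}\cong\bigoplus_\epsilon\overline{\Lambda}_\epsilon\otimes_k(V_\epsilon/W_\epsilon)\), and your structural lemma shows that \(M_{\geq r}\langle r\rangle\) is Koszul. This completes your direct route with an explicit truncation degree, and it needs neither coherence nor projective covers.

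The paper argues differently. It cites \cite[Lemme~3.2]{14} for \(\Omega^{2}M\cong\bigoplus_j\Lambda\alpha_j\langle i_j\rangle\). It then deduces almost linearity from boundedness of the cohomology of the Koszul complex along \(0\to\Omega^2M\to P_1\to P_0\to M\to 0\), using the identification of \(\Lambda\textup{-}\mathrm{GMod}^{+,b}\) with \(\Lambda\textup{-}\mathrm{Al}^{\mathbb Z}\).
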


\begin{proof}
We prove only the first statement, since the second one is dual.

Let \(M\) be a finitely presented graded \(\Lambda\)-module. By
\cite[Lemme~3.2]{14}, the second syzygy of \(M\) decomposes as a finite direct
sum
\[
\Omega^{2}M \cong \bigoplus_{j=1}^{r} \Lambda \alpha_j\langle i_j\rangle
\]
for suitable arrows \(\alpha_j\) and integers \(i_j\). Moreover, by
\cite[Proposition~3.2]{14}, each module \(\Lambda \alpha_j\) is linear. Hence
\(\Omega^{2}M\) is a finite direct sum of shifts of linear modules.

Now consider the beginning of a graded projective resolution of \(M\):
\[
0 \longrightarrow \Omega^{2}M
\longrightarrow P_{1}
\longrightarrow P_{0}
\longrightarrow M
\longrightarrow 0 .
\]
Applying the coKoszul complex functor \(K\), we obtain an exact sequence of
complexes
\[
0 \longrightarrow K(\Omega^{2}M)
\longrightarrow K(P_{1})
\longrightarrow K(P_{0})
\longrightarrow K(M)
\longrightarrow 0 .
\]
Therefore we get a long exact sequence in cohomology. Since \(\Omega^{2}M\)
is a finite direct sum of shifts of linear modules, the complex
\(K(\Omega^{2}M)\) has bounded cohomology. Moreover, \(K(P_{0})\) and
\(K(P_{1})\) have bounded cohomology. It follows from the long exact sequence
that \(K(M)\) has bounded cohomology.

Thus \(M\) has a linear truncation; equivalently, \(M\) is almost linear. The
dual argument shows that every finitely copresented graded \(\Lambda\)-module
is almost colinear.
\end{proof}
We are now ready to strengthen our bounded derived Koszul duality in the case of
quadratic monomial algebras.

\begin{theorem}
Let \(\Lambda\) be a quadratic monomial algebra. Then there is an equivalence
of triangulated categories
\[
\begin{tikzcd}[column sep=huge]
\mathsf{D}^{b}\!\bigl(\Lambda^{!}\textup{-}\mathrm{Fcp}^{\mathbb{Z}}\bigr)
\arrow[r, bend left=35, "\mathfrak{F}"]
&
\mathsf{D}^{b}\!\bigl(\Lambda\textup{-}\mathrm{Fp}^{\mathbb{Z}}\bigr)
\arrow[l, bend left=35, "\mathfrak{G}"']
\end{tikzcd}
\]

\end{theorem}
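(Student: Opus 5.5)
The plan is to deduce the statement from Theorem~3.2, using Lemma~3.5 together with a comparison of bounded derived categories along the inclusions
\[
\Lambda\textup{-}\mathrm{Fp}^{\mathbb Z}\subseteq\Lambda\textup{-Al}^{\mathbb Z},
\qquad
\Lambda^{!}\textup{-}\mathrm{Fcp}^{\mathbb Z}\subseteq\Lambda^{!}\textup{-Acl}^{\mathbb Z}.
\]
Recall that \(\Lambda^{!}\) is again quadratic monomial, and that both algebras are coherent and cocoherent. Lemma~3.5, applied to \(\Lambda\) and to \(\Lambda^{!}\), gives these two inclusions. Coherence makes \(\Lambda\textup{-}\mathrm{Fp}^{\mathbb Z}\) an abelian, extension-closed subcategory of \(\Lambda\textup{-}\mathrm{GMod}^{+}\) with enough projectives, namely \(\Lambda\textup{-Proj}^{\mathbb Z}\). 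Dually, cocoherence makes \(\Lambda^{!}\textup{-}\mathrm{Fcp}^{\mathbb Z}\) abelian with enough injectives.

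\textbf{Step 1: projective model on the \(\Lambda\) side.} I would show that the inclusion induces a fully faithful triangle functor
\[
\mathsf{D}^{b}(\Lambda\textup{-}\mathrm{Fp}^{\mathbb Z})\to\mathsf{D}^{b}(\Lambda\textup{-Al}^{\mathbb Z}).
\]
Both categories have the same projectives, \(\Lambda\textup{-Proj}^{\mathbb Z}\), so by the equivalences recalled in Section~2.3 both sides are identified with subcategories of \(\mathsf{K}^{-,b}(\Lambda\textup{-Proj}^{\mathbb Z})\). The left side consists of complexes whose cohomology is finitely presented; the right side of complexes whose cohomology is almost linear. The functor is then the inclusion between them, and full faithfulness is automatic. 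Its essential image is
\[
\{X\in\mathsf{D}^{b}(\Lambda\textup{-Al}^{\mathbb Z}) \mid H^n(X)\in\Lambda\textup{-}\mathrm{Fp}^{\mathbb Z}\ \text{for all } n\}.
\]
Here coherence is what guarantees that a right bounded complex of finitely generated projectives with finitely presented cohomology is resolved inside \(\mathrm{Fp}\).

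\textbf{Step 2: injective model on the \(\Lambda^{!}\) side.} Dually, using \(\mathsf{K}^{+,b}(\Lambda^{!}\textup{-Inj}^{\mathbb Z})\), I would identify \(\mathsf{D}^{b}(\Lambda^{!}\textup{-}\mathrm{Fcp}^{\mathbb Z})\) with the full subcategory of \(\mathsf{D}^{b}(\Lambda^{!}\textup{-Acl}^{\mathbb Z})\) consisting of complexes with finitely copresented cohomology.

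\textbf{Step 3: matching the images (the main obstacle).} It remains to check that the equivalence \(\mathfrak F\) of Theorem~3.2 carries the image of Step~2 exactly onto the image of Step~1. Since \(\mathfrak F\) is triangulated and both images are triangulated subcategories, I would argue on generators, separately in each direction.

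For \(\mathfrak F\), the subcategory \(\mathsf{D}^{b}(\Lambda^{!}\textup{-}\mathrm{Fcp}^{\mathbb Z})\) is generated by finitely copresented modules \(N\). For such \(N\), \(\mathfrak F(N)=K(N)\) is a linear complex of finitely generated projectives, because \(N\) is locally finite in each degree. Its cohomology is bounded, since \(N\) is almost colinear. Each cohomology group is a kernel modulo an image of maps between finitely generated projectives over the coherent algebra \(\Lambda\), hence finitely presented. So \(\mathfrak F(N)\) lies in the image of Step~1.

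For \(\mathfrak G\), I would first use the triangle from Theorem~3.2 to decompose each generator. Concretely, a finitely presented \(M\) sits in a short exact sequence with a finite-dimensional piece and a linear truncation \(M_{\ge r}\). This truncation is again finitely presented by coherence, and it is Koszul up to shift. By Proposition~3.1, \(\mathfrak G(M_{\ge r})\) is a colinear \(\Lambda^{!}\)-module with a colinear injective resolution by finite injectives, and so is finitely copresented. The finite-dimensional piece is sent to a bounded complex of finite injectives, whose cohomology is finitely copresented by cocoherence of \(\Lambda^{!}\).

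The delicate point I expect is exactly this finiteness bookkeeping: verifying that each term of the colinear resolution is a \emph{finite} sum of injectives. This should follow from the degreewise finite-dimensionality of the finitely presented module. Once it is in place, restricting \(\mathfrak F\) and \(\mathfrak G\) gives the claimed mutually inverse equivalence.
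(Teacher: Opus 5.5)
Your argument is correct, but it takes a longer route than the paper. The paper states this theorem without a separate proof. The intended deduction, spelled out for the parallel Theorem~3.13, identifies the module categories themselves. Lemma~3.5 gives $\Lambda\textup{-}\mathrm{Fp}^{\mathbb Z}\subseteq\Lambda\textup{-}\mathrm{Al}^{\mathbb Z}$ and $\Lambda^{!}\textup{-}\mathrm{Fcp}^{\mathbb Z}\subseteq\Lambda^{!}\textup{-}\mathrm{Acl}^{\mathbb Z}$. The reverse inclusions are elementary: an almost linear $M$ is an extension of the finite-dimensional module $M_{<r}$ by $M_{\geq r}$, which has a linear resolution by finitely generated projectives. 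Finite-dimensional modules are finitely presented, since each $\Lambda_{\geq 1}e_x$ is generated by finitely many arrows. The $\Lambda^{!}$ side is dual. So the categories coincide, and Theorem~3.2 applies verbatim, with $\mathfrak F,\mathfrak G$ literally the functors constructed there.

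You instead keep only the inclusions and compare derived categories along them. This forces you to use projective and injective models, coherence and cocoherence explicitly, and a generation argument on both sides. What this buys is robustness. Your Steps~1--3 would still give a restricted equivalence between the subcategories with finitely presented, respectively finitely copresented, cohomology even if $\mathrm{Al}$ were strictly larger than $\mathrm{Fp}$. Here, though, it is more work than needed.

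In fact your own Step~1 already finishes the job. Over the coherent algebra $\Lambda$, every complex in $\mathsf K^{-,b}(\Lambda\textup{-}\mathrm{Proj}^{\mathbb Z})$ has finitely presented cohomology. So the essential image you describe is all of $\mathsf D^{b}(\Lambda\textup{-}\mathrm{Al}^{\mathbb Z})$, and dually in Step~2 by cocoherence of $\Lambda^{!}$. Step~3 is therefore automatic.

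Likewise, the finiteness bookkeeping you flag as delicate is immediate. The $n$th term of $K^{\vee}(N)$ is $\bigoplus_{x} I^{!}_x\langle n\rangle\otimes_k N_n(x)$, and each $N_n(x)$ is finite-dimensional.

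One small imprecision: $\mathfrak G(M_{\geq r})$ is a coKoszul module only up to an internal and a cohomological shift (cf.\ the shift formula in Proposition~2.2). This is harmless, since your target subcategories are closed under shifts.
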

In the special case where \(\Lambda\) has radical square zero, we recover the
following well-known form of Koszul duality~\cite{12}.

\begin{corollary}
Let \(\Lambda\) be a finite-dimensional algebra with radical square zero. Then
there is an equivalence of triangulated categories
\[
\begin{tikzcd}[column sep=huge]
\mathsf{D}^{b}\!\bigl(\Lambda^{!}\textup{-}\mathrm{Fcp}^{\mathbb{Z}}\bigr)
\arrow[r, bend left=35, "\mathfrak{F}"]
&
\mathsf{D}^{b}\!\bigl(\Lambda\textup{-}\mathrm{gmod}\bigr)
\arrow[l, bend left=35, "\mathfrak{G}"']
\end{tikzcd}
\]
\end{corollary}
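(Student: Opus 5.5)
The plan is to obtain this corollary as a direct specialization of Theorem~3.7 (the quadratic monomial case), by checking two things. First, a finite-dimensional radical square zero algebra falls under that theorem. Second, for such an algebra the category \(\Lambda\textup{-}\mathrm{Fp}^{\mathbb Z}\) coincides with \(\Lambda\textup{-}\mathrm{gmod}\). No new homological input should be needed beyond Theorem~3.7.

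\emph{Step 1: \(\Lambda\) is quadratic monomial.} In the standing setting \(\Lambda=kQ/I\) with the path-length grading, radical square zero means \(I=(kQ)_{\geq 2}\). This ideal is generated by all paths of length \(2\), so it is generated by quadratic monomials, and \(\Lambda\) is a quadratic monomial algebra. In particular \(\Lambda\) is Koszul. Its dual is \(\Lambda^{!}=kQ^{\mathrm{op}}\), since \(I_2=V\) gives \(I_2^{\perp}=0\). This is again quadratic monomial, with no relations, as recalled before Lemma~3.6. Hence Theorem~3.7 applies and gives
\[
\mathsf{D}^{b}\!\bigl(\Lambda^{!}\textup{-}\mathrm{Fcp}^{\mathbb{Z}}\bigr)\simeq \mathsf{D}^{b}\!\bigl(\Lambda\textup{-}\mathrm{Fp}^{\mathbb{Z}}\bigr),
\]
with the equivalences induced by \(\mathfrak F\) and \(\mathfrak G\).

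\emph{Step 2: \(\Lambda\textup{-}\mathrm{Fp}^{\mathbb Z}=\Lambda\textup{-}\mathrm{gmod}\).} If \(M\) is finitely presented, it is a quotient of a finite direct sum of shifts \(P_x\langle n\rangle\). Each such shift is finite-dimensional because \(Q\) is finite and \(\Lambda\) is finite-dimensional, so \(M\) is finite-dimensional.

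Conversely, let \(M\) be finite-dimensional. Only finitely many pairs \((x,i)\) have \(M(x)_i\neq0\). A basis of these spaces gives an epimorphism \(P^0\to M\) from a finite sum of shifted indecomposable projectives. Its kernel is a submodule of the finite-dimensional module \(P^0\), hence itself finite-dimensional. Applying the same argument to the kernel gives a second term \(P^{-1}\), so \(M\) is finitely presented.

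Substituting this identification into Step 1 yields the stated equivalence.

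The only point requiring care, and the one I regard as the main (though mild) obstacle, is Step 1's identification of "radical square zero" with the quadratic monomial presentation \(kQ/(kQ)_{\geq2}\) carrying the path-length grading. This is implicit in the paper's standing convention that algebras are of the form \(kQ/I\) graded by path length; I would state it explicitly.

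As a consistency check, one can also derive the result from Corollary~3.3. Since \(\Lambda^{!}=kQ^{\mathrm{op}}\) is hereditary, finitely copresented \(\Lambda^{!}\)-modules have injective dimension at most one, so \(\Lambda^{!}\textup{-}\mathrm{Fcp}^{\mathbb Z}=\Lambda^{!}\textup{-}\mathrm{Cop}^{\mathbb Z}\). This agrees with the first equivalence of Corollary~3.3.
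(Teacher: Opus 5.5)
Your proposal is correct and matches the argument the paper intends. The paper gives no separate proof: the corollary is presented as the special case of the quadratic monomial theorem for $\Lambda=kQ/(kQ)_{\geq 2}$, together with $\Lambda\textup{-}\mathrm{Fp}^{\mathbb Z}=\Lambda\textup{-}\mathrm{gmod}$ for finite-dimensional $\Lambda$. Note that the paper numbers that theorem~3.6 and the lemma~3.5, so citing Theorem~3.7 here would read as circular. Your cross-check via Corollary~3.3 is also sound: $\Lambda^{!}=kQ^{\mathrm{op}}$ is hereditary, which gives $\Lambda^{!}\textup{-}\mathrm{Fcp}^{\mathbb Z}=\Lambda^{!}\textup{-}\mathrm{Cop}^{\mathbb Z}$.
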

The singular Koszul duality takes a particularly simple form for quadratic
monomial algebras.

\begin{corollary}
Let \(\Lambda\) be a quadratic monomial algebra. Then the bounded derived
Koszul duality induces an equivalence of triangulated categories
\[
\begin{tikzcd}[column sep=huge]
\mathsf{D}^{b}\!\bigl(
\operatorname{cqgr}(\Lambda^{!})
\bigr)
\arrow[r, bend left=35, "\mathfrak{F}"]
&
\mathsf{D}_{\mathrm{sg}}\!\bigl(
\Lambda\textup{-}\mathrm{Fp}^{\mathbb{Z}}
\bigr)
\arrow[l, bend left=35, "\mathfrak{G}"']
\end{tikzcd}
\]
and, dually, an equivalence
\[
\begin{tikzcd}[column sep=huge]
\mathsf{D}^{b}\!\bigl(
\Lambda^{!}\textup{-}\mathrm{Fcp}^{\mathbb{Z}}
\bigr)
\big/
\mathsf{D}^{b}\!\bigl(\Lambda^{!}\textup{-Cop}^{\mathbb{Z}}\bigr)
\arrow[r, bend left=35, "\mathfrak{F}"]
&
\mathsf{D}^{b}\!\bigl(
\operatorname{qgr}(\Lambda)
\bigr)
\arrow[l, bend left=35, "\mathfrak{G}"']
\end{tikzcd}
\]
Moreover, the tails category \(\operatorname{qgr}(\Lambda)\) and the cotails
category \(\operatorname{cqgr}(\Lambda^{!})\) are abelian hereditary
categories.
\end{corollary}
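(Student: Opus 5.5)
The plan has three parts. First, apply the singular duality of Corollary~3.4 and identify its two sides for quadratic monomial algebras. Second, obtain the second equivalence by quotienting by finite-dimensional modules on the \(\Lambda\)-side. Third, prove hereditarity by using the explicit shape of syzygies from Lemma~3.5.

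\textbf{First equivalence.} Lemma~3.5 gives \(\Lambda\textup{-}\mathrm{Fp}^{\mathbb Z}\subseteq\Lambda\textup{-Al}^{\mathbb Z}\) and \(\Lambda^{!}\textup{-}\mathrm{Fcp}^{\mathbb Z}\subseteq\Lambda^{!}\textup{-Acl}^{\mathbb Z}\). Theorem~3.6 shows that the equivalence of Theorem~3.2 restricts to
\[
\mathsf D^b(\Lambda^{!}\textup{-}\mathrm{Fcp}^{\mathbb Z})\simeq\mathsf D^b(\Lambda\textup{-}\mathrm{Fp}^{\mathbb Z}).
\]
Every finite-dimensional graded module over the cocoherent algebra \(\Lambda^{!}\) is finitely copresented, since it is a finite extension of simples and simples are finitely copresented by cocoherence. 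Hence \(\Lambda^{!}\textup{-gmod}\) is a Serre subcategory of \(\Lambda^{!}\textup{-}\mathrm{Fcp}^{\mathbb Z}\), and \([12,\text{Theorem }2.18]\) gives
\[
\mathsf D^b(\operatorname{cqgr}\Lambda^{!})\simeq\mathsf D^b(\Lambda^{!}\textup{-}\mathrm{Fcp}^{\mathbb Z})/\mathsf D^b_{\mathrm{gmod}}.
\]
The subcategory \(\mathsf D^b_{\mathrm{gmod}}\) is generated by the simples \(S^{!}_x\langle i\rangle\). By Proposition~2.2 these are sent to \(P_x\langle i\rangle\), which generate \(\mathsf K^b(\Lambda\textup{-Proj}^{\mathbb Z})\). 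The Verdier quotients therefore match, exactly as in Corollary~3.4, and this yields the first equivalence.

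\textbf{Second equivalence.} Run the dual argument with the roles reversed. On the \(\Lambda\)-side, \(\Lambda\textup{-gmod}\subseteq\Lambda\textup{-}\mathrm{Fp}^{\mathbb Z}\) by coherence, and \(\mathsf D^b(\operatorname{qgr}\Lambda)\simeq\mathsf D^b(\Lambda\textup{-}\mathrm{Fp}^{\mathbb Z})/\mathsf D^b_{\mathrm{gmod}}\). Under \(\mathfrak G\), the simple \(S_x\langle i\rangle\) goes to \(I^{!}_x\langle i\rangle\). So \(\mathsf D^b_{\mathrm{gmod}}\) corresponds to the thick subcategory generated by graded injectives, namely \(\mathsf K^b(\Lambda^{!}\textup{-Inj}^{\mathbb Z})\simeq\mathsf D^b(\Lambda^{!}\textup{-Cop}^{\mathbb Z})\). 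The last identification uses that a bounded complex of finitely cogenerated injectives with finitely copresented cohomology yields coperfect modules.

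The point needing care in both steps is that the thick closure of the images of simples is exactly \(\mathsf K^b(\mathrm{Proj})\), respectively \(\mathsf K^b(\mathrm{Inj})\), inside the bounded category, not something larger. I would argue this by induction on Loewy length, since every finite-dimensional module has finite length. Conversely, each \(P_x\langle i\rangle\) is hit.

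\textbf{Hereditarity.} This is the main obstacle. For \(\operatorname{qgr}\Lambda\) I will show \(\operatorname{Ext}^2=0\) in the quotient, using the formula
\[
\operatorname{Ext}^n_{\operatorname{qgr}}(\pi M,\pi N)=\varinjlim_r\operatorname{Ext}^n_{\mathrm{Fp}}(M_{\ge r},N),
\]
which is valid because finite-dimensional modules are killed. By Lemma~3.5 (that is, \([14]\)), a high truncation \(M_{\ge r}\) has first syzygy a direct sum of modules \(\Lambda\alpha\langle i\rangle\). Each \(\Lambda\alpha\) is isomorphic, up to a finite-dimensional discrepancy, to a projective \(P\langle -1\rangle\) modulo a finite-dimensional piece, since right multiplication by \(\alpha\) in a monomial algebra has kernel generated by arrows. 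Hence \(\pi\Omega(M_{\ge r})\) is projective-like in \(\operatorname{qgr}\). More precisely, I expect every object of \(\operatorname{qgr}\) to have a resolution \(0\to\pi P_1\to\pi P_0\to\pi M\to0\) in which Ext out of \(\pi P\) into \(\pi N\) vanishes in positive degrees after truncation, because the submodules \(\Lambda\alpha\) are themselves linear. This gives global dimension \(\le1\). The cotails statement for \(\Lambda^{!}\) follows by the dual argument with injective copresentations.
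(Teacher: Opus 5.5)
Your treatment of the two equivalences is correct and follows the paper's route. The paper only says they follow from Theorem~3.6. You make explicit the matching of Verdier quotients as in Corollary~3.4, combined with \cite[Theorem~2.18]{12}: finite-dimensional $\Lambda^{!}$-modules correspond to $\mathsf K^b(\Lambda\textup{-Proj}^{\mathbb Z})$, and finite-dimensional $\Lambda$-modules to $\mathsf K^b(\Lambda^{!}\textup{-Inj}^{\mathbb Z})$. One minor slip: $\mathsf K^b(\Lambda^{!}\textup{-Inj}^{\mathbb Z})\simeq\mathsf D^b(\Lambda^{!}\textup{-Cop}^{\mathbb Z})$ holds because every coperfect module has a finite resolution by finitely cogenerated injectives inside $\Lambda^{!}\textup{-Cop}^{\mathbb Z}$. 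It does not hold for the reason you give, since the cohomology of a bounded complex of such injectives need not be coperfect.

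The genuine gap is the final assertion, that $\operatorname{qgr}(\Lambda)$ and $\operatorname{cqgr}(\Lambda^{!})$ are hereditary. The paper does not prove this but imports it from \cite[Corollary~3.9]{14}. Your argument rests on the claim that each $\Lambda\alpha$ is a shifted projective up to a finite-dimensional discrepancy, and this is false. For an arrow $\alpha\colon x\to y$, right multiplication gives $\Lambda\alpha\cong(P_y/K)\langle -1\rangle$ with $K=\bigoplus_{\beta\alpha\in I}\Lambda\beta$. This $K$ is generated by arrows but is in general infinite-dimensional. For example, for $\Lambda=k\langle x,y\rangle/(yx)$ one gets $\Lambda x\cong(\Lambda/\Lambda y)\langle -1\rangle$, where $\Lambda y\cong\Lambda\langle -1\rangle$ has basis $\{x^ay^b : a\geq 0,\ b\geq 1\}$. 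Comparing Hilbert functions, $\pi(\Lambda x)$ is not $\pi$ of any shifted projective.

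Beyond this point the proposal only states what you expect. Along your lines you would need $\operatorname{Ext}^{\geq 1}_{\operatorname{qgr}}(\pi P,-)=0$. But already in this example $\operatorname{Ext}^1(P_{\geq r},\Lambda)\neq 0$ for every $r$, because $P_{\geq r}$ is a sum of modules $\Lambda p$ that are not all projective. Any vanishing must therefore come from the colimit over $r$, and you have not shown it. Linearity of $\Lambda\alpha$ cannot supply it either: over the Koszul algebra $k[x,y]$ one has $\operatorname{Ext}^1_{\operatorname{qgr}}(\pi\Lambda,\pi\Lambda\langle -2\rangle)\cong H^1(\mathbb P^1,\mathcal O(-2))\neq 0$. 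Two further points are unsupported. Lemma~3.5 describes $\Omega^2M$, not $\Omega^1(M_{\geq r})$. The colimit formula for $\operatorname{Ext}^n_{\operatorname{qgr}}$ is asserted without proof for the small quotient $\Lambda\textup{-}\mathrm{Fp}^{\mathbb Z}/\Lambda\textup{-}\mathrm{gmod}$ of a merely coherent algebra. As written, the hereditarity claim is not established. You should either cite \cite[Corollary~3.9]{14} as the paper does, or give an actual proof that uses the monomial structure beyond linearity.
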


\begin{proof}
The equivalence follows from Theorem~3.6. By~\cite[Corollary~3.9]{14}, the cotails category
\[
\operatorname{cqgr}(\Lambda^{!})
\]
is abelian hereditary. This proves the assertion.
\end{proof}
If, moreover, \(\Lambda\) is Iwanaga--Gorenstein, we obtain the following
generalization of the BGG correspondence~\cite[Theorem~3.17]{12}.

\begin{corollary}
Assume that \(\Lambda\) is an Iwanaga--Gorenstein quadratic monomial algebra.
Then there are equivalences of triangulated categories
\[
\mathsf{D}^{b}\!\Bigl(
\operatorname{qgr}(\Lambda^{!})
\Bigr)
\;\xrightarrow{\ \sim\ }\;
\mathsf{D}^{b}\!\Bigl(
\operatorname{cqgr}(\Lambda^{!})
\Bigr)
\;\xrightarrow{\ \sim\ }\;
\Lambda\textup{-}\underline{\mathrm{GProj}}^{\mathbb Z} .
\]
\end{corollary}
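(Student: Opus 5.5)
The plan is to assemble the stated chain of equivalences from three ingredients already at hand: the Buchweitz--Orlov theorem for coherent Iwanaga--Gorenstein algebras, the singular Koszul duality of Corollary~3.8 for quadratic monomial algebras, and a comparison between the tails and cotails categories of \(\Lambda^{!}\).

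The first step produces the right-hand equivalence. Quadratic monomial algebras are coherent \cite[Theorem~3.3]{14}, and \(\Lambda\) is assumed Iwanaga--Gorenstein. Hence the Buchweitz--Orlov theorem, in its coherent form \cite{18}, gives
\[
\Lambda\textup{-}\underline{\mathrm{GProj}}^{\mathbb Z}\simeq\mathsf{D}_{\mathrm{sg}}\bigl(\Lambda\textup{-}\mathrm{Fp}^{\mathbb Z}\bigr).
\]
The first equivalence of Corollary~3.8 gives
\[
\mathsf{D}_{\mathrm{sg}}\bigl(\Lambda\textup{-}\mathrm{Fp}^{\mathbb Z}\bigr)\simeq\mathsf{D}^{b}\bigl(\operatorname{cqgr}(\Lambda^{!})\bigr).
\]
Composing these two yields \(\mathsf{D}^{b}(\operatorname{cqgr}(\Lambda^{!}))\simeq\Lambda\textup{-}\underline{\mathrm{GProj}}^{\mathbb Z}\).

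The second step produces the left-hand equivalence \(\mathsf{D}^{b}(\operatorname{qgr}(\Lambda^{!}))\simeq\mathsf{D}^{b}(\operatorname{cqgr}(\Lambda^{!}))\). I would obtain it by applying the second equivalence of Corollary~3.8 with the roles of \(\Lambda\) and \(\Lambda^{!}\) exchanged. This is legitimate because \(\Lambda^{!}\) is again quadratic monomial and \((\Lambda^{!})^{!}=\Lambda\), and it gives
\[
\mathsf{D}^{b}\bigl(\operatorname{qgr}(\Lambda^{!})\bigr)\simeq\mathsf{D}^{b}\bigl(\Lambda\textup{-}\mathrm{Fcp}^{\mathbb Z}\bigr)\big/\mathsf{D}^{b}\bigl(\Lambda\textup{-}\mathrm{Cop}^{\mathbb Z}\bigr).
\]
It then remains to identify the right-hand side with \(\mathsf{D}_{\mathrm{sg}}(\Lambda\textup{-}\mathrm{Fp}^{\mathbb Z})\). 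This is exactly where the Gorenstein hypothesis enters a second time.

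Concretely, \(\Lambda\) is Iwanaga--Gorenstein, so \(\operatorname{id}\Lambda<\infty\) on both sides. Since \(\Lambda\) is coherent and cocoherent, the duality \(D=\operatorname{Hom}_k(-,k)\) exchanges finitely presented right modules with finitely copresented left modules, and projectives with injectives. Thus the left-side Buchweitz equivalence for \(\Lambda^{\mathrm{op}}\), dualized, gives
\[
\mathsf{D}^{b}\bigl(\Lambda\textup{-}\mathrm{Fcp}^{\mathbb Z}\bigr)\big/\mathsf{K}^{b}\bigl(\Lambda\textup{-}\mathrm{Inj}^{\mathbb Z}\bigr)\simeq\Lambda\textup{-}\overline{\mathrm{GInj}}^{\mathbb Z}.
\]
Here \(\mathsf{D}^{b}(\Lambda\textup{-}\mathrm{Cop}^{\mathbb Z})\) identifies with \(\mathsf{K}^{b}(\Lambda\textup{-}\mathrm{Inj}^{\mathbb Z})\) inside \(\mathsf{D}^{b}(\Lambda\textup{-}\mathrm{Fcp}^{\mathbb Z})\). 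Over an Iwanaga--Gorenstein algebra, the standard functor \(\operatorname{Hom}(D\Lambda,-)\), together with \(D\Lambda\otimes-\), is an equivalence between the stable categories of Gorenstein injectives and Gorenstein projectives (Enochs--Jenda, Beligiannis \cite{7}). This gives \(\Lambda\textup{-}\overline{\mathrm{GInj}}^{\mathbb Z}\simeq\Lambda\textup{-}\underline{\mathrm{GProj}}^{\mathbb Z}\), and composing with the first step completes the chain.

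The main obstacle is this last identification, in which finitely copresented modules modulo coperfect ones are matched with the Gorenstein projective stable category. The point to verify is that the Gorenstein-injective and Gorenstein-projective theories transfer correctly in the graded, coherent, infinite-dimensional setting: finitely (co)presented resolutions must remain finitely (co)presented, and \(D\) must preserve the relevant finiteness. If that route proves delicate, I would fall back on the alternative of showing directly that \(\operatorname{qgr}(\Lambda^{!})\) and \(\operatorname{cqgr}(\Lambda^{!})\) are both hereditary abelian (by \cite[Corollary~3.9]{14}) with equivalent bounded derived categories via a Serre-functor or tilting comparison. However, I expect the Gorenstein-duality route to be the intended one.
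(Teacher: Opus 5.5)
Your proposal is correct. The paper states this corollary without proof, pointing only to \cite[Theorem~3.17]{12}. The evident intended argument for the right-hand equivalence is exactly your first step: Corollary~3.8 composed with the Buchweitz--Orlov--Chen equivalence for the coherent Iwanaga--Gorenstein algebra \(\Lambda\).

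For the left-hand equivalence \(\mathsf{D}^{b}(\operatorname{qgr}(\Lambda^{!}))\simeq\mathsf{D}^{b}(\operatorname{cqgr}(\Lambda^{!}))\) the paper gives no argument, so your second step completes the proof rather than offering an alternative route. Applying Corollary~3.8 to \(\Lambda^{!}\) is legitimate, since \(\Lambda^{!}\) is quadratic monomial and \((\Lambda^{!})^{!}=\Lambda\). Some duality argument of your kind is genuinely needed. In the finite-dimensional setting of \cite{12}, one uses that \(\mathsf{K}^{b}\) of projectives and \(\mathsf{K}^{b}\) of injectives coincide inside one derived category when \(\Lambda\) is Gorenstein. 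Here, however, \(\Lambda\textup{-}\mathrm{Fp}^{\mathbb Z}\subseteq\Lambda\textup{-}\mathrm{GMod}^{+}\) and \(\Lambda\textup{-}\mathrm{Fcp}^{\mathbb Z}\subseteq\Lambda\textup{-}\mathrm{GMod}^{-}\) are different categories once \(\Lambda\) is infinite-dimensional. So the singularity category of the first has to be matched with the cosingularity category of the second by a duality, as you do.

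The step you flag as the main obstacle is milder than you expect.
\begin{enumerate}
\item On locally finite modules, graded duality \(D\) is an anti-equivalence between finitely presented graded right modules and \(\Lambda\textup{-}\mathrm{Fcp}^{\mathbb Z}\). It exchanges shifts of \(e_x\Lambda\) with shifts of \(I_x\).
\item Buchweitz--Chen applies on the right, because \(\Lambda^{\mathrm{op}}\) is again quadratic monomial, hence coherent, and the Iwanaga--Gorenstein condition is two-sided.
\item The last identification needs no general Gorenstein-injective theory. The Nakayama functor \(\nu=D\Lambda\otimes_\Lambda-\) satisfies \(\nu(\Lambda e_x)\cong D(e_x\Lambda)=I_x\). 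It restricts to an equivalence between finitely generated graded projectives and finitely cogenerated graded injectives, with quasi-inverse \(\operatorname{Hom}_\Lambda(D\Lambda,-)\).
\item Let \(P^\bullet\) be a totally acyclic complex of finitely generated graded projectives. Then \(\nu P^\bullet\cong D\operatorname{Hom}_\Lambda(P^\bullet,\Lambda)\) is exact, and \(\operatorname{Hom}_\Lambda(\nu Q,\nu P^\bullet)\cong\operatorname{Hom}_\Lambda(Q,P^\bullet)\) is exact. Hence \(\nu\) directly induces \(\Lambda\textup{-}\underline{\mathrm{GProj}}^{\mathbb Z}\simeq\Lambda\textup{-}\overline{\mathrm{GInj}}^{\mathbb Z}\).
\end{enumerate}
Alternatively, you can bypass Gorenstein injectives altogether. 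After dualizing, use the duality \(\operatorname{Hom}_\Lambda(-,\Lambda)\) between finitely presented totally reflexive right and left modules.

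Your fallback via hereditariness of \(\operatorname{qgr}\) and \(\operatorname{cqgr}\) would not by itself yield an equivalence, but it is not needed.
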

\subsection{Bounded Koszul Duality for Certain Koszul Algebras} We next extend the bounded Koszul duality obtained for quadratic monomial
algebras in Theorem~3.6 to a broader class of Koszul algebras. This class
contains, in particular, quadratic monomial algebras and exterior algebras,
and is characterized by two homological conditions particularly well suited
to Koszul duality. Throughout this subsection, let \(\Lambda\) be a Koszul
algebra satisfying the following assumptions:
\begin{enumerate}
\item[\textup{(i)}] \(\Lambda\) is absolutely Koszul;
\item[\textup{(ii)}] every finitely presented graded \(\Lambda\)-module is
almost linear.
\end{enumerate}
We shall show that these hypotheses are sufficient to obtain a bounded
Koszul duality analogous to that established above for quadratic monomial
algebras. The proof uses results of Eisenbud, Fl{\o}ystad, and Schreyer,
originally established for the exterior algebra in
\cite[Lemma~3.5, Corollary~3.6, and Theorem~3.7]{21}. The same arguments
extend to the present setting; see also~\cite[Proposition~3.4]{51}. In the
form needed below, these results identify the linear part of a minimal
projective resolution, respectively the colinear part of a minimal
coinjective resolution, with the complex obtained by applying the Koszul
functor, respectively the coKoszul functor, to the appropriate cohomology
object over the Koszul dual algebra.

\begin{lemma}
Let \(M\in \Lambda^{!}\textup{-}\mathrm{GMod}^{-}\) and
\(N\in \Lambda\textup{-}\mathrm{GMod}^{+}\).
Let
\[
P^{\bullet}\longrightarrow N
\]
be a minimal graded projective resolution of \(N\), and let
\[
M\longrightarrow I^{\bullet}
\]
be a minimal graded injective resolution of \(M\).

Under the Koszul duality
\[
\begin{tikzcd}[column sep=huge]
\mathsf{D}^{\downarrow}\!\bigl(\Lambda^{!}\textup{-}\mathrm{GMod}^{-}\bigr)
\arrow[r, bend left=35, "\mathfrak{F}"]
&
\mathsf{D}^{\uparrow}\!\bigl(\Lambda\textup{-}\mathrm{GMod}^{+}\bigr)
\arrow[l, bend left=35, "\mathfrak{G}"']
\end{tikzcd}
\]
the colinear strand of \(I^{\bullet}\) and the linear strand of
\(P^{\bullet}\) admit the following descriptions:
\begin{enumerate}
\item[\textup{(i)}]
The colinear strand of \(I^{\bullet}\) is isomorphic to
\[
\mathfrak{G}\!\left(
\bigoplus_{n\in\mathbb Z}
H^{n}\!\bigl(\mathfrak{F}(M)\bigr)[-n]
\right).
\]

\item[\textup{(ii)}]
The linear strand of \(P^{\bullet}\) is isomorphic to
\[
\mathfrak{F}\!\left(
\bigoplus_{n\in\mathbb Z}
H^{n}\!\bigl(\mathfrak{G}(N)\bigr)[-n]
\right).
\]
\end{enumerate}
\end{lemma}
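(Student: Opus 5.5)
We prove (ii); statement (i) is formally dual, obtained by exchanging the roles of \(\Lambda\) and \(\Lambda^{!}\), projectives and injectives, and \(\mathfrak{F}\) and \(\mathfrak{G}\). The approach follows Eisenbud--Fl{\o}ystad--Schreyer~\cite[Theorem~3.7]{21}. The linear strand of a minimal resolution is read off from the graded Ext groups against simples. Koszul duality turns these Ext groups into the cohomology of \(\mathfrak{G}(N)\).

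First, the multiplicities of the terms of \(\operatorname{lin}(P^{\bullet})\) can be computed. By minimality,
\[
P^{-n}\cong\bigoplus_{x,i}P_x\langle -i\rangle^{\dim \operatorname{Ext}^n(N,S_x\langle -i\rangle)}.
\]
The linear strand \(\operatorname{lin}(P^\bullet)\) is a linear complex in the sense of \(\mathcal{LC}^{-}(\Lambda\textup{-Proj}^{\mathbb Z})\), after regrading each piece by its internal degree. Via the equivalence \(K\colon \Lambda^{!}\textup{-GMod}^{-}\xrightarrow{\sim}\mathcal{LC}^{-}(\Lambda\textup{-Proj}^{\mathbb Z})\), it is therefore \(K\) of some graded \(\Lambda^!\)-module. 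More precisely, the linear strand splits as a direct sum over the ``diagonals'' \(i-n=c\). Each diagonal is a linear complex shifted by \([-c]\) and \(\langle\cdot\rangle\), hence equal to \(K(L_c)\) suitably shifted, for a unique \(\Lambda^!\)-module \(L_c\). The remaining task is to identify \(L_c\) with \(H^{c}(\mathfrak{G}(N))\) up to the regrading conventions of Proposition~2.2. Applying \(\mathfrak{F}\) to \(H^{c}(\mathfrak{G}(N))[-c]\) then gives exactly the \(c\)-th diagonal, as \(\mathfrak{F}\) on a module is \(K\).

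Second, we identify \(L_c\) as a graded vector space. On one side, \(\dim (L_c)_j(x)\) equals the multiplicity of \(P_x\langle\cdot\rangle\) in the relevant position of the linear strand, which is a dimension of \(\operatorname{Ext}^{n}(N,S_x\langle -i\rangle)\). On the other side, by Proposition~2.2 and Theorem~2.1,
\[
\operatorname{Ext}^n(N,S_x\langle -i\rangle)\cong \operatorname{Hom}_{\mathsf D}(\mathfrak{G}N,\, I^!_x\langle -i\rangle[n]).
\]
Since \(I^!_x\) is injective, this equals \(\operatorname{Hom}(H^{\ast}(\mathfrak{G}N), I^!_x\langle\cdot\rangle)\), which equals \(D\bigl(e_xH^{\ast}(\mathfrak{G}N)_{\ast}\bigr)\) in the appropriate degree. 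This is the same computation as in the proof of Proposition~3.1, with simples and injectives swapped. Hence the two graded vector spaces agree dimensionwise.

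Third, and this is the main obstacle, we match the module structures, equivalently the differentials. The degree-one components \(d_1\) of the minimal resolution correspond, under the above isomorphisms, to the Yoneda action of \(\operatorname{Ext}^1(S_y,S_x\langle -1\rangle)=(\Lambda^!_1)\) on \(\bigoplus \operatorname{Ext}^{\ast}(N,S)\). Indeed, \(d_1\) is obtained by composing \(P^{-n}\to P^{-n+1}\) with the projection modulo \(\operatorname{rad}^2\), and this is the standard description of the Yoneda product with degree-one classes. On the other side, the \(\Lambda^!\)-action on \(H^{\ast}(\mathfrak{G}N)\) is induced by \(\Lambda^!_1\) acting on the colinear complex \(\mathfrak G(N)\). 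Under the isomorphism of the second step, this action corresponds to precomposition with the maps \(I^!_x\to I^!_y\langle 1\rangle[1]\)-type morphisms, which \(\mathfrak{F}\) sends to the arrows \(P_\alpha\) by Proposition~2.2. We would verify this compatibility by writing \(\mathfrak{G}(P_x)\) explicitly as the minimal injective resolution of \(S^!_x\), and checking that \(\mathfrak{G}\) carries \(P_\alpha\) to the map induced by \(\alpha^{\mathrm{op}}\). This gives \(L_c\cong H^{c}(\mathfrak{G}N)\) as \(\Lambda^!\)-modules. Summing over \(c\) then yields the asserted isomorphism \(\operatorname{lin}(P^\bullet)\cong \mathfrak{F}\bigl(\bigoplus_n H^n(\mathfrak{G}N)[-n]\bigr)\), up to the sign and shift normalizations, which we would fix once using the case \(N=S_x\). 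In that case the linear strand is the whole resolution, and \(\mathfrak G(S_x)=I^!_x\) is a module.
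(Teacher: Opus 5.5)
The paper gives no proof of this lemma. It only asserts that the arguments of \cite[Lemma~3.5, Corollary~3.6, Theorem~3.7]{21} and \cite[Proposition~3.4]{51} carry over, and those arguments work directly with complexes. Your Ext/Yoneda route is genuinely different, and it works once two points (below) are corrected.

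\textbf{Comparison.} A complex-level proof filters \(P^{\bullet}\) by diagonals. Give the summand \(P_x\langle -i\rangle\) of \(P^{-n}\) the index \(i-n\). A component of internal degree \(j\) of the differential lowers the index by \(j-1\). Hence the summands of index \(\le c\) form subcomplexes \(F_{\le c}\), with \(F_{\le c}/F_{\le c-1}=K(L_c)[-c]\), the \(c\)-th diagonal of \(\operatorname{lin}(P^{\bullet})\). Apply \(\mathfrak{G}\), which is computed termwise and so respects this exhaustive filtration. It sends the graded pieces to \(L_c[-c]\), so \(\mathfrak{G}(F_{\le c})\) realizes the truncation filtration of \(\mathfrak{G}(N)\). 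The isomorphism \(L_c\cong H^{c}(\mathfrak{G}N)\) of \(\Lambda^{!}\)-modules then comes for free, with no Yoneda products and no computation of \(\mathfrak{G}\) on individual morphisms. Your approach instead uses the classical picture of \(\operatorname{Ext}^{\ast}(N,\bigoplus_{x,i}S_x\langle i\rangle)\) as a module over degree-one Ext classes between simples. This makes the graded pieces of \(H^{c}(\mathfrak{G}N)\) explicitly dual to Ext groups. The price is one extra compatibility, and that is where your third step goes astray.

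\textbf{The third step.} Here the two halves of the duality are crossed. Precomposition with the arrows \(P_\alpha\), checked by computing \(\mathfrak{G}(P_\alpha)\), is the compatibility needed for part (i), where \(H^{\ast}(\mathfrak{F}M)\) is computed by maps out of the projectives \(P_x\langle\cdot\rangle\).

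For (ii), use \(\operatorname{Hom}_{\mathsf D}(\mathfrak{G}N,I^{!}_x\langle\cdot\rangle[m])\cong\operatorname{Hom}(H^{-m}(\mathfrak{G}N),I^{!}_x\langle\cdot\rangle)\). Under this isomorphism the \(\Lambda^{!}_1\)-action on cohomology corresponds to postcomposition with module maps \(I^{!}_y\to I^{!}_x\langle 1\rangle\) in cohomological degree \(0\). Since \(\mathfrak{F}\) sends \(I^{!}_x\) to the resolution of \(S_x\), these maps correspond to classes in \(\operatorname{Ext}^{1}(S_y,S_x\langle -1\rangle)\), not to the \(P_\alpha\).

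What you must check is the following. For an arrow \(\alpha\colon y\to x\), \(\mathfrak{G}\) sends the class of \(0\to S_x\langle -1\rangle\to E_\alpha\to S_y\to 0\) to the map \(I^{!}_y\to I^{!}_x\langle 1\rangle\) given by \(\alpha^{\mathrm{op}}\), up to a sign depending only on degrees. This follows from the definition: \(K^{\vee}(E_\alpha)\) is the two-term complex \(I^{!}_y\to I^{!}_x\langle 1\rangle\) with differential \(\alpha^{\mathrm{op}}\). This differential is the connecting morphism of the triangle \(K^{\vee}(S_x\langle -1\rangle)\to K^{\vee}(E_\alpha)\to K^{\vee}(S_y)\to\).

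The check is not cosmetic. Without it you only get \(L_c\cong{}^{\sigma}H^{c}(\mathfrak{G}N)\) for some graded automorphism \(\sigma\) of \(\Lambda^{!}\). Since \(K\) is an equivalence, \(K({}^{\sigma}H)\cong K(H)\) would require \({}^{\sigma}H\cong H\), which fails in general. A sign twist \(a\mapsto(-1)^{|a|}a\) is harmless, being undone by \(h\mapsto(-1)^{|h|}h\).

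\textbf{The degree bookkeeping in the second step.} Read this off from the definition of \(K^{\vee}\), not from Proposition~2.2. The definition places \(K^{\vee}(S_x\langle -i\rangle)=I^{!}_x\langle i\rangle\) in cohomological degree \(i\), so \(\operatorname{Ext}^{n}(N,S_x\langle -i\rangle)\) is matched with \(H^{i-n}(\mathfrak{G}N)\). Your displayed formula follows the first sentences of Proposition~2.2, which clash with its last formula. It would instead match \(\operatorname{Ext}^{n}(N,S_x\langle -i\rangle)\) with \(H^{-n}\), breaking the correspondence with diagonals. The test case \(N=S_x\) lies on the single diagonal \(c=0\), so it cannot detect this.
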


We shall also use the following simple observation.

\begin{lemma}
Let \(\Lambda\) be a Koszul algebra such that every finitely presented graded
\(\Lambda\)-module is almost linear. Then \(\Lambda\) is left graded coherent.
\end{lemma}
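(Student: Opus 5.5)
The plan is to show that the kernel of any morphism between finitely presented graded modules is again finitely presented; this is equivalent to graded coherence. A standard reduction says it suffices to show that every finitely generated graded submodule \(L\subseteq P\) of a finitely generated graded projective \(P\) is finitely presented. Equivalently, the kernel \(\Omega L\) of a projective cover \(P_L\to L\) should be finitely generated. Since \(L\) is finitely generated, the cokernel \(M=P/L\) is finitely presented, and \(L\cong\Omega M\) up to projective summands. So the whole task reduces to showing that \(\Omega^2 M\) is finitely generated for every finitely presented graded \(M\). Then \(\Omega M\) is finitely presented, and the usual argument gives closure of \(\Lambda\textup{-}\mathrm{Fp}^{\mathbb Z}\) under kernels.

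For this I use the hypothesis: \(M\) is almost linear. Choose \(r\) with \(M_{\geq r}\langle r\rangle\) Koszul. Since \(\Lambda\) is locally finite-dimensional and generated in degree \(1\), and \(M\) is finitely generated and left bounded, each \(M(x)_i\) is finite-dimensional. Hence \(M_{<r}=M/M_{\geq r}\) is a finite-dimensional graded module. A Koszul module is generated in degree \(0\), and its linear resolution has terms generated in single degrees. Each term is therefore a finite sum of shifted indecomposable projectives, because there are only finitely many vertices and \((M_{\geq r})_r\) is finite-dimensional. Thus \(M_{\geq r}\) is of type \(FP_\infty\): every syzygy is finitely generated.

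Next I treat the finite-dimensional quotient \(M_{<r}\). It has a finite filtration with simple subquotients \(S_x\langle i\rangle\). Each simple is Koszul up to shift, since \(\Lambda\) is Koszul with finite quiver. So each simple has a linear resolution by finitely generated projectives and is of type \(FP_\infty\). By the horseshoe lemma, applied to the filtration, \(M_{<r}\) is of type \(FP_\infty\) as well.

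Finally, the short exact sequence
\[
0\longrightarrow M_{\geq r}\longrightarrow M\longrightarrow M_{<r}\longrightarrow 0
\]
shows that \(M\) is of type \(FP_\infty\), since \(FP_\infty\) is closed under extensions (horseshoe again). In particular \(\Omega^2M\) is finitely generated, which completes the reduction.

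The main obstacle I expect is the bookkeeping in the first step. One has to ensure that the coherence reduction is stated in the graded setting with possibly non-admissible \(I\). In that setting "finitely generated" must be read via finite sums of shifted \(P_x\), and minimal projective covers exist for left bounded modules. I would handle this by working throughout with left bounded modules and graded Nakayama, which holds since \(\Lambda_0\) is semisimple (spanned by the vertex idempotents).
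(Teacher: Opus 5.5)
Your proof is correct, but it takes a different route from the paper's.

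\textbf{The paper's route.} The paper takes a morphism $f\colon P^{-1}\to P^{0}$ between finitely generated graded projectives. It applies the hypothesis only to $\operatorname{Coker} f$. It then transports almost linearity first to $\operatorname{Im} f$ and then to $\operatorname{Ker} f$, using the two-out-of-three property of almost linear modules. In the paper, that property rests on the identification $\Lambda\textup{-}\mathrm{GMod}^{+,b}=\Lambda\textup{-Al}^{\mathbb Z}$ together with exactness of $K^{\vee}$. The paper ends with the unstated fact that almost linear modules are finitely presented.

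\textbf{Your route.} You never move almost linearity from one module to another. Instead you prove directly that every almost linear module is of type $FP_\infty$:
\begin{enumerate}
\item the Koszul truncation $M_{\geq r}$ has a linear resolution by finitely generated projectives;
\item the finite-dimensional quotient $M_{<r}$ is filtered by shifted simples, each of which has such a resolution because $\Lambda$ is Koszul;
\item the horseshoe lemma handles the extensions.
\end{enumerate}
Schanuel's lemma then shows that $L$ is finitely presented.

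\textbf{Comparison.} Given its machinery, the paper's argument is shorter. It also gives a bit more: $\operatorname{Im} f$ and $\operatorname{Ker} f$ are themselves almost linear, so $\Lambda\textup{-}\mathrm{Fp}^{\mathbb Z}=\Lambda\textup{-Al}^{\mathbb Z}$ is closed under kernels. Your argument is self-contained. It uses only the definition of almost linearity and the Koszulity of the simples, and it avoids the two-out-of-three property entirely. It also makes explicit the step ``almost linear implies finitely presented'' that the paper's final sentence uses without comment.

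\textbf{One small point.} The finiteness of the higher terms in the linear resolution of $M_{\geq r}$ does not follow from $(M_{\geq r})_r$ being finite-dimensional alone. Either it is built into the paper's definition of a linear complex, or it follows by induction: the $n$-th syzygy is generated in degree $r+n$ inside the previous term, whose component in that degree is finite-dimensional.
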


\begin{proof}
Let
\[
f\colon P^{-1}\longrightarrow P^{0}
\]
be a morphism between finitely generated graded projective \(\Lambda\)-modules. Then
\(\operatorname{Coker} f\) is finitely presented, and hence almost linear by assumption.
Since \(P^{0}\) is almost linear, it follows from the two-out-of-three property that
\(\operatorname{Im} f\) is almost linear. Applying the same argument to the exact sequence
\[
0\longrightarrow \operatorname{Ker} f
\longrightarrow P^{-1}
\longrightarrow \operatorname{Im} f
\longrightarrow 0,
\]
we conclude that \(\operatorname{Ker} f\) is almost linear. In particular,
\(\operatorname{Ker} f\) is finitely presented. Thus \(\Lambda\) is left graded coherent.
\end{proof}
We now use these results to establish the following dual characterization.

\begin{proposition}
Let \(\Lambda\) be a Koszul algebra with Koszul dual \(\Lambda^{!}\). Then
\(\Lambda\) satisfies conditions \textup{(i)} and \textup{(ii)} above if and only if
\(\Lambda^{!}\) satisfies the dual conditions; that is, \(\Lambda^{!}\) is absolutely
coKoszul and every finitely copresented graded \(\Lambda^{!}\)-module is almost
colinear.
\end{proposition}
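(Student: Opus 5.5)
We prove only that conditions (i) and (ii) for \(\Lambda\) imply the dual conditions for \(\Lambda^{!}\). The converse follows by the same argument with the roles of \(\mathfrak{F}\) and \(\mathfrak{G}\) exchanged, projectives replaced by injectives, and Lemma~3.10(i) replaced by Lemma~3.10(ii). The central tool is the bounded derived Koszul duality of Theorem~3.2, together with the identification \(\Lambda^{!}\textup{-GMod}^{-,b}=\Lambda^{!}\textup{-Acl}^{\mathbb Z}\) and \(\Lambda\textup{-GMod}^{+,b}=\Lambda\textup{-Al}^{\mathbb Z}\) made in its proof. We will use Lemma~3.10 to transport weak (co)Koszulity across the duality.

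\emph{Step 1: almost colinearity of finitely copresented \(\Lambda^{!}\)-modules.} Let \(M\) be finitely copresented, with \(0\to M\to I^0\to I^1\) where \(I^0\) and \(I^1\) are finite sums of shifts of \(I^{!}_x\). By Proposition~2.2, \(\mathfrak{F}(I^{!}_x\langle i\rangle)\) is the minimal projective resolution of \(S_x\langle i\rangle\), so \(\mathfrak{F}(I^j)\) has bounded cohomology consisting of finite-dimensional modules. Set \(C=\operatorname{Coker}(I^0\to I^1)\). Then \(C\) is finitely cogenerated, so (i') requires no finite presentation in the dual sense. Rather than arguing directly, we translate the question into one about \(\mathfrak{F}(M)\). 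By the long exact sequence, \(M\) is almost colinear if and only if \(\mathfrak{F}(C)\) has bounded cohomology; equivalently, \(H^{n}(K(C))=0\) for \(n\gg0\). The cohomology \(H^n(K(C))\) is the graded dual object computed by \(\operatorname{Ext}^{*}(S^{!},C)\) along diagonals. So we pass through Lemma~3.10: the colinear strand of a minimal injective resolution of \(C\) is \(\mathfrak{G}\bigl(\bigoplus_n H^n(\mathfrak{F}(C))[-n]\bigr)\). The cohomology groups \(H^n(\mathfrak F(C))\) are \(\Lambda\)-modules which are cokernels of maps between finitely generated projectives, since the terms of \(K(C)\) are finite because \(C\) is finitely cogenerated and hence each \(C_n\) is finite-dimensional. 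They are therefore finitely presented, hence almost linear by (ii), and \(\Lambda\) is coherent by Lemma~3.11. The key point is to show that \(H^n(\mathfrak F(C))\) vanishes for large \(n\). We will use the absolute Koszulity of \(\Lambda\) (i): the finitely generated module \(H^n\) has a weakly Koszul syzygy, so its linear strand is eventually exact. By Lemma~3.10(ii) this forces \(\mathfrak{G}\) of its cohomology to be bounded. We then compare the linear part with \(K^{\vee}\).

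\emph{Step 2: absolute coKoszulity of \(\Lambda^{!}\).} Let \(M\) be a finitely cogenerated \(\Lambda^{!}\)-module, with minimal injective resolution \(I^\bullet\). By Lemma~3.10(i), \(\operatorname{colin}(I^\bullet)\cong\mathfrak{G}(\bigoplus_n H^n(\mathfrak{F}(M))[-n])\). The cosyzygy \(\Omega^{-m}M\) is weakly coKoszul exactly when the truncated colinear strand is exact, which is equivalent to \(\mathfrak{G}(H^n(\mathfrak{F}(M)))\) having cohomology only in the appropriate degree for \(n\ge m\), that is, to the modules \(H^n(\mathfrak F(M))\langle\cdot\rangle\) being Koszul (linear) for \(n\gg0\), by Proposition~3.1 read through \(\mathfrak G\). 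The modules \(H^n(\mathfrak F(M))\) are finitely presented, since the terms of \(K(M)\) are finitely generated and \(\Lambda\) is coherent by Lemma~3.11. By (ii) each is almost linear. By Step~1 (applied in the form "\(\mathfrak F(M)\) has bounded cohomology after finitely many cosyzygies") only finitely many of them are nonzero beyond the range where the linear truncation has taken effect. Absolute Koszulity of \(\Lambda\) then gives uniformly that a syzygy of each is weakly Koszul, which yields exactness of the colinear strand from some point on.

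\emph{Main obstacle.} The delicate step is Step~1: proving that \(H^n(\mathfrak F(C))=0\) for \(n\gg0\) using only (i) and (ii). This is a uniformity issue. Condition (ii) gives a linear truncation for each module separately, whereas we need the shifts of the \(H^n\) to line up diagonally. We will try to resolve it by reading the diagonal degrees of \(H^n(K(C))\) as Betti numbers of \(\mathfrak{G}\)-images and invoking the Eisenbud--Fl{\o}ystad--Schreyer identification of Lemma~3.10 twice, once in each direction. If that fails, we will fall back on the stronger formulation "regularity of the linear strand is finite", which is implied by absolute Koszulity.
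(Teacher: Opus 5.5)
Your proposal has a genuine gap, and you have located it yourself: Step~1 is never proved, and the sketch does not close it. Each \(H^{n}(\mathfrak F(C))\) is finitely presented, hence almost linear by (ii), so \(\mathfrak G\bigl(H^{n}(\mathfrak F(C))\bigr)\) is bounded for each \(n\) separately. Invoking absolute Koszulity module by module adds nothing to this. Nothing in your argument controls how many of the \(H^{n}(\mathfrak F(C))\) are nonzero, and that is precisely the content of almost colinearity. (Also, \(C\) is right bounded, so \(K(C)\) vanishes in large cohomological degrees and \(H^{n}(K(C))=0\) for \(n\gg0\) is automatic; the nontrivial vanishing is for \(n\ll0\).) The fallback strategies in your last paragraph are intentions, not arguments. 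Step~2 relies on Step~1, and in a stronger form (``\(\mathfrak F(M)\) bounded after finitely many cosyzygies'' for a merely finitely cogenerated \(M\)) that Step~1 would not supply even if completed. So the implication remains unproved.

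The missing idea, which is how the paper argues, is to apply the hypotheses to a single \(\Lambda\)-module rather than to infinitely many.

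View \(I^{\bullet}=(I^{0}\to I^{1})\) as a bounded complex with \(H^{0}(I^{\bullet})=M\). By Proposition~2.2, \(\mathfrak F(I^{\bullet})\) is a right bounded complex of finitely generated projectives with bounded, finite-dimensional cohomology. Take \(n\) minimal with \(H^{n}(\mathfrak F(I^{\bullet}))\neq0\). Then the brutal truncation \(\mathfrak F(I^{\bullet})^{<n}\) is a projective resolution of the single finitely generated module \(N=\operatorname{Im}d^{\,n-1}\), and one gets a triangle \(\mathfrak F(I^{\bullet})^{\geq n}\to\mathfrak F(I^{\bullet})\to N[\,\cdot\,]\to\).

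Applying \(\mathfrak G\) and using \(\mathfrak G\mathfrak F\cong\mathrm{Id}\) gives a triangle \(\mathfrak G(\mathfrak F(I^{\bullet})^{\geq n})\to I^{\bullet}\to\mathfrak G(N)[\,\cdot\,]\to\). Its first term has finite-dimensional cohomology, being \(\mathfrak G\) of a bounded complex of finitely generated projectives. Its third term has almost colinear cohomology by absolute Koszulity and Lemma~3.10(ii). The long exact cohomology sequence, together with the two-out-of-three property of \(\Lambda^{!}\textup{-GMod}^{-,b}\), then places \(M\) in \(\Lambda^{!}\textup{-Acl}^{\mathbb Z}\).

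Only once \(\mathfrak F(M)\) is known to be bounded does your Step~2 go through. Then only finitely many \(H^{n}(\mathfrak F(M))\) are nonzero, each is almost linear by (ii), and Lemma~3.10(i) bounds the colinear strand. The paper runs this last step for finitely copresented \(M\).
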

\begin{proof}
Assume that \(\Lambda\) satisfies conditions \textup{(i)} and \textup{(ii)}, and let \(M\) be a finitely copresented graded \(\Lambda^{!}\)-module. Then there exists an exact sequence
\[
0\longrightarrow M\longrightarrow I^0\longrightarrow I^1.
\]

To prove that \(M\) is almost colinear, it suffices to show that the complex
\[
\mathfrak{F}(M)=K(M)
\]
has bounded cohomology. Consider the complex
\[
I^{\bullet}\colon
0\longrightarrow I^0\longrightarrow I^1\longrightarrow 0,
\]
such that
\[
H^0(I^{\bullet})=M.
\]

Since \(I^{\bullet}\) is bounded, the complex \(\mathfrak{F}(I^{\bullet})\) has bounded cohomology. Let
\[
n=\min\{\,i\in\mathbb Z \mid H^{i}(\mathfrak{F}(I^{\bullet}))\neq 0\,\}.
\]
Then there is an exact triangle in
\[
\mathsf{D}^{\uparrow}\!\bigl(\Lambda\textup{-}\mathrm{GMod}^{+}\bigr)
\]
of the form
\[
\mathfrak{F}(I^{\bullet})^{\geq n}
\longrightarrow
\mathfrak{F}(I^{\bullet})
\longrightarrow
\mathfrak{F}(I^{\bullet})^{< n}
\longrightarrow
\mathfrak{F}(I^{\bullet})^{\geq n}[1].
\]

Since \(n\) is the smallest integer for which
\(H^{n}(\mathfrak{F}(I^{\bullet}))\neq 0\), the complex
\(\mathfrak{F}(I^{\bullet})^{< n}\) is a projective resolution of
\(\operatorname{Im} d^{\,n-1}\). Hence we obtain an exact triangle
\[
\mathfrak{F}(I^{\bullet})^{\geq n}
\longrightarrow
\mathfrak{F}(I^{\bullet})
\longrightarrow
\operatorname{Im} d^{\,n-1}[-n]
\longrightarrow
\mathfrak{F}(I^{\bullet})^{\geq n}[1].
\]

Applying \(\mathfrak{G}\) yields an exact triangle in
\[
\mathsf{D}^{\downarrow}\!\bigl(\Lambda^{!}\textup{-}\mathrm{GMod}^{-}\bigr)
\]
\[
\mathfrak{G}\!\left(\mathfrak{F}(I^{\bullet})^{\geq n}\right)
\longrightarrow
\mathfrak{G}\!\left(\mathfrak{F}(I^{\bullet})\right)
\longrightarrow
\mathfrak{G}\!\left(\operatorname{Im} d^{\,n-1}[-n]\right)
\longrightarrow
\mathfrak{G}\!\left(\mathfrak{F}(I^{\bullet})^{\geq n}\right)[1].
\]

Since \(\mathfrak{G}\mathfrak{F}\cong \operatorname{Id}\), this triangle may be rewritten as
\[
\mathfrak{G}\!\left(\mathfrak{F}(I^{\bullet})^{\geq n}\right)
\longrightarrow
I^{\bullet}
\longrightarrow
\mathfrak{G}\!\left(\operatorname{Im} d^{\,n-1}[-n]\right)
\longrightarrow
\mathfrak{G}\!\left(\mathfrak{F}(I^{\bullet})^{\geq n}\right)[1].
\]

Since \(\mathfrak{F}(I^{\bullet})^{\geq n}\) is a bounded complex of projective modules, the complex
\[
\mathfrak{G}\!\left(\mathfrak{F}(I^{\bullet})^{\geq n}\right)
\]
has bounded cohomology, and all of its nonzero cohomology modules are finite-dimensional. On the other hand, since \(\Lambda\) is absolutely Koszul, Lemma~3.10\textup{(ii)} implies that the cohomology modules of
\[
\mathfrak{G}\!\left(\operatorname{Im} d^{\,n-1}[-n]\right)
\]
are almost colinear.

Therefore, the long exact sequence in cohomology, together with the two-out-of-three property of the category
\[
\Lambda^{!}\textup{-}\mathrm{GMod}^{-,b},
\]
shows that
\[
H^0(I^{\bullet})=M
\]
belongs to \(\Lambda^{!}\textup{-}\mathrm{GMod}^{-,b}\). Consequently,
\(\mathfrak{F}(M)\) has bounded cohomology, and hence \(M\) is almost colinear.

It remains to prove that \(\Lambda^{!}\) is absolutely coKoszul. To this end, let \(M\) be a finitely copresented graded \(\Lambda^{!}\)-module. Since \(\mathfrak{F}(M)\) has bounded cohomology and every finitely presented graded \(\Lambda\)-module is almost linear, it follows that each cohomology module
\[
H^{n}\!\bigl(\mathfrak{F}(M)\bigr)
\]
is almost linear. Equivalently,
\[
\mathfrak{G}\!\left(H^{n}\!\bigl(\mathfrak{F}(M)\bigr)\right)
\]
has bounded cohomology for every \(n\in\mathbb Z\).

Hence, by Lemma~3.10\textup{(i)}, the colinear strand of a minimal graded injective resolution of \(M\) has bounded cohomology. Therefore \(M\) admits a weakly coKoszul cosyzygy, and consequently \(\Lambda^{!}\) is absolutely coKoszul.

The converse implication is proved dually.
\end{proof}
We are now ready to state the main result of this subsection.

\begin{theorem}
Let \(\Lambda\) be a Koszul algebra satisfying conditions \textup{(i)} and \textup{(ii)}. Then there is a Koszul duality
\[
\begin{tikzcd}[column sep=huge]
\mathsf{D}^{b}\!\bigl(\Lambda^{!}\textup{-}\mathrm{Fcp}^{\mathbb{Z}}\bigr)
\arrow[r, bend left=35, "\mathfrak{F}"]
&
\mathsf{D}^{b}\!\bigl(\Lambda\textup{-}\mathrm{Fp}^{\mathbb{Z}}\bigr)
\arrow[l, bend left=35, "\mathfrak{G}"']
\end{tikzcd}
\]
\end{theorem}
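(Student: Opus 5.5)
The plan is to deduce the statement from Theorem~3.2 by showing that, under conditions \textup{(i)} and \textup{(ii)}, the bounded derived categories of almost (co)linear modules coincide with those of finitely (co)presented modules. This follows the pattern already used for quadratic monomial algebras in Theorem~3.6.

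\textbf{Step 1: the inclusions.} By condition \textup{(ii)}, $\Lambda\textup{-Fp}^{\mathbb Z}\subseteq\Lambda\textup{-Al}^{\mathbb Z}$. By Proposition~3.12, $\Lambda^{!}$ satisfies the dual conditions, so $\Lambda^{!}\textup{-Fcp}^{\mathbb Z}\subseteq\Lambda^{!}\textup{-Acl}^{\mathbb Z}$. Lemma~3.11 shows that $\Lambda$ is left graded coherent, and its dual shows that $\Lambda^{!}$ is cocoherent. Consequently $\Lambda\textup{-Fp}^{\mathbb Z}$ is an abelian, extension-closed subcategory with enough projectives, namely the finitely generated graded projectives, and dually for $\Lambda^{!}\textup{-Fcp}^{\mathbb Z}$.

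\textbf{Step 2: full faithfulness of the induced functors.} I will show that the induced functors $\mathsf{D}^{b}(\Lambda\textup{-Fp}^{\mathbb Z})\to\mathsf{D}^{b}(\Lambda\textup{-Al}^{\mathbb Z})$ and $\mathsf{D}^{b}(\Lambda^{!}\textup{-Fcp}^{\mathbb Z})\to\mathsf{D}^{b}(\Lambda^{!}\textup{-Acl}^{\mathbb Z})$ are fully faithful. Both categories have enough projectives, given by the finitely generated graded projectives, which lie in $\Lambda\textup{-Proj}^{\mathbb Z}$. Each side is therefore equivalent to $\mathsf{K}^{-,b}$ of the relevant projectives. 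Every module in $\Lambda\textup{-Fp}^{\mathbb Z}$ has a projective resolution by finitely generated projectives, by coherence, so the first category identifies with the full subcategory $\mathsf{K}^{-,b}(\Lambda\textup{-proj}^{\mathbb Z})\subseteq\mathsf{K}^{-,b}(\Lambda\textup{-Proj}^{\mathbb Z})$. The dual argument uses injective coresolutions.

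\textbf{Step 3: compatibility with the equivalence of Theorem~3.2.} I will show that the equivalence $\mathfrak F$ of Theorem~3.2 maps the image of $\mathsf{D}^{b}(\Lambda^{!}\textup{-Fcp}^{\mathbb Z})$ into that of $\mathsf{D}^{b}(\Lambda\textup{-Fp}^{\mathbb Z})$, and that $\mathfrak G$ does the converse. By d\'evissage along the bounded complex it suffices to treat a single module $M\in\Lambda^{!}\textup{-Fcp}^{\mathbb Z}$. Then $\mathfrak F(M)=K(M)$ is a right bounded complex of finitely generated projectives: $M$ is right bounded and locally finite, and $Q_0$ is finite, so each term $\bigoplus_x P_x\langle n\rangle\otimes M_n(x)$ is finitely generated. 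By Step~1 its cohomology is bounded. Hence $K(M)$ lies in $\mathsf{K}^{-,b}(\Lambda\textup{-proj}^{\mathbb Z})\simeq\mathsf{D}^{b}(\Lambda\textup{-Fp}^{\mathbb Z})$. Note that this requires only finite-dimensionality of the graded pieces and boundedness of cohomology; it does not require finite presentation of the cohomology modules, which coherence supplies anyway. Dually, $K^{\vee}(N)$ for $N\in\Lambda\textup{-Fp}^{\mathbb Z}$ is a left bounded complex of finitely cogenerated injectives with bounded cohomology, and so it lies in $\mathsf{D}^{b}(\Lambda^{!}\textup{-Fcp}^{\mathbb Z})$. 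Since $\mathfrak F$ and $\mathfrak G$ are quasi-inverse, they restrict to quasi-inverse equivalences between these full subcategories.

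\textbf{Main obstacle.} The hard part is Step~2 together with the identification in Step~3. One must check that a right bounded complex of finitely generated projectives with bounded cohomology actually represents an object of $\mathsf{D}^{b}(\Lambda\textup{-Fp}^{\mathbb Z})$. To do this I would truncate the complex at a degree $m$ below all nonzero cohomology. Its good truncation then has finitely presented cohomology, because kernels of maps between finitely generated projectives are finitely presented by coherence (Lemma~3.11). This truncation is a bounded complex in $\Lambda\textup{-Fp}^{\mathbb Z}$ quasi-isomorphic to the original, and proving this is the key technical point.
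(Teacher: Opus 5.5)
Your argument is correct, but it takes a longer route than the paper.

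\textbf{The paper's route.} Under (i) and (ii), the paper identifies the categories outright: $\Lambda\textup{-Al}^{\mathbb Z}=\Lambda\textup{-Fp}^{\mathbb Z}$, and, via Proposition~3.12, $\Lambda^{!}\textup{-Acl}^{\mathbb Z}=\Lambda^{!}\textup{-Fcp}^{\mathbb Z}$. Theorem~3.2 then applies verbatim.

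\textbf{Your route.} You use only the inclusions $\mathrm{Fp}\subseteq\mathrm{Al}$ and $\mathrm{Fcp}\subseteq\mathrm{Acl}$. You compensate with a derived comparison:
\begin{enumerate}
\item both categories share the same finitely generated projectives, which gives full faithfulness;
\item d\'evissage reduces to stalk complexes;
\item the terms of $K(M)$ are finitely generated;
\item good truncation, using coherence, does the rest.
\end{enumerate}

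\textbf{The reverse inclusion is formal.} The paper uses $\mathrm{Al}\subseteq\mathrm{Fp}$ without comment, but it needs no hypotheses beyond Koszulity. If $M_{\geq r}\langle r\rangle$ is Koszul, then $M_{\geq r}$ has a linear resolution by finitely generated projectives. Since $M$ is left bounded and locally finite, $M/M_{\geq r}$ is finite-dimensional, hence an iterated extension of simples. Simples are finitely presented over a Koszul algebra, so $M$ is finitely presented. The dual argument gives $\mathrm{Acl}\subseteq\mathrm{Fcp}$. With this observation, your Steps~2 and~3 become unnecessary.

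\textbf{What your route buys.} It would still work if one only knew that $\mathrm{Fp}$ sits inside $\mathrm{Al}$ as a resolving-type subcategory containing the finitely generated projectives. It also makes the role of coherence (Lemma~3.11) explicit. In the paper's route, coherence is essentially a consequence of the identification rather than an ingredient.

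\textbf{A notational point.} In this paper $\Lambda\textup{-Proj}^{\mathbb Z}$ already denotes finite direct sums of the $P_x\langle n\rangle$. So your two homotopy categories of projectives are the same, not one inside the other. The two derived categories differ only in whether the low-degree cycles are required to lie in $\mathrm{Fp}$ or in $\mathrm{Al}$, which coherence settles.
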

\begin{proof}
By Lemma~3.11, the algebra \(\Lambda\) is left graded coherent and its Koszul dual
\(\Lambda^{!}\) is left graded cocoherent. Moreover, the category of almost linear
graded \(\Lambda\)-modules coincides with the category of finitely presented graded
\(\Lambda\)-modules, while the category of almost colinear graded \(\Lambda^{!}\)-modules
coincides with the category of finitely copresented graded \(\Lambda^{!}\)-modules.
The asserted equivalence therefore follows from the bounded graded Koszul duality
for almost linear and almost colinear modules established in Theorem~3.2.
\end{proof}
We also obtain a singular Koszul duality for this class of Koszul algebras.

\begin{corollary}
Let \(\Lambda\) be a Koszul algebra satisfying conditions \textup{(i)} and \textup{(ii)}. Then there is a singular Koszul duality
\[
\begin{tikzcd}[column sep=huge]
\mathsf{D}^{b}\!\bigl(
\operatorname{cqgr}(\Lambda^{!})
\bigr)
\arrow[r, bend left=35, "\mathfrak{F}"]
&
\mathsf{D}_{\mathrm{sg}}\!\bigl(
\Lambda\textup{-}\mathrm{Fp}^{\mathbb{Z}}
\bigr)
\arrow[l, bend left=35, "\mathfrak{G}"']
\end{tikzcd}
\]
together with its dual counterpart
\[
\begin{tikzcd}[column sep=huge]
\mathsf{D}^{b}\!\bigl(
\Lambda^{!}\textup{-}\mathrm{Fcp}^{\mathbb{Z}}
\bigr)
\big/
\mathsf{D}^{b}\!\bigl(\Lambda^{!}\textup{-Cop}^{\mathbb{Z}}\bigr)
\arrow[r, bend left=35, "\mathfrak{F}"]
&
\mathsf{D}^{b}\!\bigl(
\operatorname{qgr}(\Lambda)
\bigr)
\arrow[l, bend left=35, "\mathfrak{G}"']
\end{tikzcd}
\]
\end{corollary}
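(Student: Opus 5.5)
The plan follows the proof of the analogous singular dualities for quadratic monomial algebras (Corollary~3.7): specialise Corollary~3.4 using the identifications made in the proof of Theorem~3.13. By Lemma~3.11 and Proposition~3.12, conditions (i) and (ii) give
\[
\Lambda\textup{-Al}^{\mathbb Z}=\Lambda\textup{-Fp}^{\mathbb Z}
\quad\text{and}\quad
\Lambda^{!}\textup{-Acl}^{\mathbb Z}=\Lambda^{!}\textup{-Fcp}^{\mathbb Z}.
\]
Moreover, \(\Lambda\) is left graded coherent and \(\Lambda^{!}\) is left graded cocoherent. Hence \(\operatorname{qgr}(\Lambda)\) and \(\operatorname{cqgr}(\Lambda^{!})\) are defined and coincide with \(\Lambda\textup{-Al}^{\mathbb Z}/\Lambda\textup{-gmod}\) and \(\Lambda^{!}\textup{-Acl}^{\mathbb Z}/\Lambda^{!}\textup{-gmod}\), respectively.

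\textbf{First equivalence.} Substitute these identifications into Corollary~3.4. The left side becomes \(\mathsf{D}^{b}(\operatorname{cqgr}(\Lambda^{!}))\), and the right side becomes \(\mathsf{D}_{\mathrm{sg}}(\Lambda\textup{-Fp}^{\mathbb Z})\). For the right side we must check that the projective objects of the exact category \(\Lambda\textup{-Fp}^{\mathbb Z}\) are exactly the finitely generated graded projectives, so that \(\mathsf{K}^{b}(\operatorname{Proj})\) is \(\mathsf{K}^{b}(\Lambda\textup{-Proj}^{\mathbb Z})\) restricted to finite sums. This holds because every object is a quotient of a finite sum of \(P_x\langle n\rangle\), and a projective object is a direct summand of such a sum.

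\textbf{Second equivalence.} This is the dual statement. Apply Theorem~3.13 and form Verdier quotients on both sides.
\begin{itemize}
\item On the \(\Lambda\)-side, use \cite[Theorem~2.18]{12} with \(\mathcal{B}=\Lambda\textup{-Fp}^{\mathbb Z}\) and \(\mathcal{A}=\Lambda\textup{-gmod}\). This gives
\[
\mathsf{D}^{b}(\operatorname{qgr}\Lambda)\simeq \mathsf{D}^{b}(\Lambda\textup{-Fp}^{\mathbb Z})/\mathsf{D}^{b}_{\Lambda\textup{-gmod}}(\Lambda\textup{-Fp}^{\mathbb Z}).
\]
This requires \(\Lambda\textup{-gmod}\subseteq\Lambda\textup{-Fp}^{\mathbb Z}\). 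That inclusion holds since simples are linear, so almost linear, and every finite-dimensional module is an extension of shifted simples, so almost linear by two-out-of-three. Serre-ness then follows as stated in the paper.
\item The thick subcategory \(\mathsf{D}^{b}_{\Lambda\textup{-gmod}}\) is generated by the shifted simples \(S_x\langle i\rangle\). By Proposition~2.2, \(\mathfrak{G}\) sends these to the injectives \(I^{!}_x\langle i\rangle\).
\item Hence \(\mathfrak{G}\) maps \(\mathsf{D}^{b}_{\Lambda\textup{-gmod}}\) onto the thick subcategory generated by \(\Lambda^{!}\textup{-Inj}^{\mathbb Z}\) inside \(\mathsf{D}^{b}(\Lambda^{!}\textup{-Fcp}^{\mathbb Z})\). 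That subcategory is \(\mathsf{K}^{b}(\Lambda^{!}\textup{-Inj}^{\mathbb Z})\simeq\mathsf{D}^{b}(\Lambda^{!}\textup{-Cop}^{\mathbb Z})\), because coperfect modules are exactly those quasi-isomorphic to bounded complexes of finite injectives.
\item An equivalence of triangulated categories matching thick subcategories induces an equivalence of Verdier quotients. This gives the second displayed equivalence.
\end{itemize}

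\textbf{Main obstacle.} The delicate step is the identification of thick subcategories in the second equivalence. One must check that the image of \(\mathsf{D}^{b}_{\Lambda\textup{-gmod}}(\Lambda\textup{-Fp}^{\mathbb Z})\) is generated by the \(S_x\langle i\rangle\), which requires Jordan–Hölder filtrations of cohomology objects inside the bounded derived category. One must also check that the full embedding \(\mathsf{D}^{b}(\Lambda^{!}\textup{-Cop}^{\mathbb Z})\to\mathsf{D}^{b}(\Lambda^{!}\textup{-Fcp}^{\mathbb Z})\) lands precisely on the thick closure of the injectives. The plan is to use the enough-injectives description \(\mathsf{D}^{b}(\mathcal{B})\simeq\mathsf{K}^{+,b}(\operatorname{Inj})\), under which bounded complexes of injectives correspond to coperfect objects. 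The matching for the first equivalence, namely \(\Lambda^{!}\textup{-gmod}\leftrightarrow\) perfect complexes, is already handled in the proof of Corollary~3.4.
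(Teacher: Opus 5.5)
Your proposal is correct and matches the argument the paper has in mind; the paper states this corollary without a separate proof. You specialize Corollary~3.4 via the identifications $\Lambda\textup{-Al}^{\mathbb Z}=\Lambda\textup{-Fp}^{\mathbb Z}$ and $\Lambda^{!}\textup{-Acl}^{\mathbb Z}=\Lambda^{!}\textup{-Fcp}^{\mathbb Z}$ from the proof of Theorem~3.13, and for the dual counterpart you match $\mathsf{D}^{b}(\Lambda^{!}\textup{-Cop}^{\mathbb Z})$ with the complexes of finite-dimensional cohomology under $\mathfrak{G}$ and pass to Verdier quotients, as in the proof of Theorem~4.2; your extra checks (projectives of $\Lambda\textup{-Fp}^{\mathbb Z}$, generation by the $S_x\langle i\rangle$ with images $I^{!}_x\langle i\rangle$, and the thick closure of the injectives being $\mathsf{K}^{b}(\Lambda^{!}\textup{-Inj}^{\mathbb Z})$) fill in details the paper leaves implicit.
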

\section{Applications to Geometry}
The purpose of this final section is to explain how the preceding results
yield a BGG-type description for Koszul quotients of polynomial algebras.
More precisely, let \(\Lambda\) be a Koszul quotient of a polynomial
algebra and let \(X=\operatorname{Proj}(\Lambda)\). We obtain a
Koszul-dual description of the bounded derived category
\(\mathsf{D}^{b}\!\bigl(\operatorname{Coh}(X)\bigr)\).
This extends the classical Bernstein--Gel'fand--Gel'fand correspondence
from projective space to projective schemes defined by Koszul quotients
of polynomial algebras. Since the Koszul dual of a Koszul quotient of a
polynomial algebra is a quotient of a free associative algebra, this
description realizes a geometric category of coherent sheaves as a
Verdier quotient associated with a noncommutative algebra. In this way,
the results provide a bridge between algebraic geometry and
noncommutative projective geometry.

We also revisit Beilinson's description of the bounded derived category of
coherent sheaves on projective space and extend this point of view to
bounded derived categories of quotient categories associated with
generalized Artin--Schelter regular algebras, in the sense of
Mart\'inez-Villa.
\subsection{Koszulity in Algebraic Geometry} As recalled in the introduction, Serre's theorem~\cite{48} identifies the category of coherent sheaves on the projective scheme \(X=\operatorname{Proj}(\Lambda)\), where \(\Lambda\) is a commutative algebra generated in degree \(1\), with the quotient category \[ \operatorname{coh}(X) \cong \operatorname{qgr}(\Lambda). \] A noncommutative analogue of this construction was introduced by Artin and Zhang~\cite{2} in the framework of noncommutative projective geometry. The result below concerns the case where \(\Lambda\) is a Koszul quotient of a polynomial algebra. An extension to arbitrary commutative noetherian Koszul algebras should be formulated in the more general setting of Beilinson, Ginzburg, and Soergel~\cite{6}, where Koszul duality is developed for graded Koszul algebras not necessarily presented by quivers with relations.

\medskip

We begin with an important theorem of Avramov and Eisenbud~\cite{3}. \begin{lemma} Let \(\Lambda\) be a commutative noetherian Koszul algebra. Then every finitely generated graded \(\Lambda\)-module is almost linear. \end{lemma}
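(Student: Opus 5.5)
The plan follows the Avramov--Eisenbud argument, which rests on regularity. Throughout, \(\Lambda\) is a commutative noetherian Koszul algebra generated in degree \(1\), and \(M\) is a finitely generated graded \(\Lambda\)-module. Since \(\Lambda\) is noetherian, \(M\) is finitely presented and left bounded, so \(M\in\Lambda\textup{-GMod}^{+}\). We must produce an integer \(r\) such that \(M_{\geq r}\langle r\rangle\) is Koszul, that is, \(M_{\geq r}\) is generated in degree \(r\) and has a linear resolution. Equivalently, by the homological characterization recalled in Section~2, we need
\[
\operatorname{Ext}^{n}_{\Lambda\textup{-GMod}}\bigl(M_{\geq r},S\langle -j\rangle\bigr)=0
\quad\text{for } j\neq n+r,
\]
that is, the graded Betti numbers \(\beta_{n,j}(M_{\geq r})\) vanish unless \(j=n+r\).

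\emph{Step 1: finite regularity.} Write \(\Lambda=S/J\) with \(S=k[x_0,\dots,x_m]\). The module \(M\) has finite projective dimension over \(S\) by Hilbert's syzygy theorem, so \(\operatorname{reg}_S(M)<\infty\). The key input, and the main obstacle, is the Avramov--Eisenbud inequality \(\operatorname{reg}_\Lambda(M)\le\operatorname{reg}_S(M)\) for Koszul \(\Lambda\). I would prove it using the change-of-rings spectral sequence
\[
\operatorname{Tor}^{\Lambda}_p\bigl(\operatorname{Tor}^{S}_q(M,\Lambda)\ldots\bigr),
\]
or more cleanly via the bar/Koszul-resolution comparison \(\operatorname{Tor}^{S}(M,k)\Rightarrow\operatorname{Tor}^{\Lambda}(M,k)\), with the Cartan--Eilenberg-type spectral sequence
\[
\operatorname{Tor}^{\Lambda}_p\bigl(k,k\bigr)\otimes\operatorname{Tor}^{S}_q(M,k)\Longrightarrow\ldots
\]
Koszulity of \(\Lambda\) (\(\operatorname{Tor}^\Lambda_p(k,k)\) concentrated in internal degree \(p\)) ensures that the slope does not grow. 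Concretely, I would filter a minimal \(\Lambda\)-resolution built from the \(S\)-resolution and the linear resolution of \(k\) over \(\Lambda\) (an Eisenbud--Shamash/Avramov-type iterated mapping cone construction) and bound internal degrees of generators in homological degree \(n\) by \(n+\operatorname{reg}_S(M)\). If this bookkeeping proves delicate, one may instead cite \cite{3} directly, as the paper does.

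\emph{Step 2: truncation.} Set \(r\ge\operatorname{reg}_\Lambda(M)\), also taken at least the top degree of a generating set. I would use the standard fact that \(\operatorname{reg}(M_{\geq r})\le\max(\operatorname{reg}M,r)=r\). This follows from the exact sequence
\[
0\to M_{\geq r}\to M\to M_{<r}\to 0,
\]
together with the fact that \(M_{<r}\) is finite-dimensional and each simple \(S_x\langle -j\rangle\) with \(j<r\) has regularity \(j\) over Koszul \(\Lambda\). Regularity is subadditive along short exact sequences in the appropriate sense, giving \(\operatorname{reg}(M_{\geq r})\le r\). Since \(M_{\geq r}\) is generated in degrees \(\ge r\) and its \(n\)-th syzygy generators lie in degrees \(\ge n+r\), regularity \(\le r\) forces equality \(j=n+r\). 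Hence the resolution is linear, \(M_{\geq r}\langle r\rangle\) is Koszul, and \(M\in\Lambda\textup{-Al}^{\mathbb Z}\).
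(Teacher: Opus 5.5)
Once Step~1 is taken from \cite{3}, your argument is correct and is essentially the paper's: the paper gives no proof beyond citing Avramov--Eisenbud. Your Step~2 is the standard and correct way to extract a linear truncation from finite regularity. It uses that over a Koszul algebra a finite-dimensional module with top degree $r-1$ has $\Lambda$-regularity at most $r-1$, and that $\operatorname{reg}A\le\max(\operatorname{reg}B,\operatorname{reg}C+1)$ for $0\to A\to B\to C\to 0$.

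The self-contained proof you sketch for Step~1, however, does not hold up as written:
\begin{itemize}
\item The standard change-of-rings spectral sequence $\operatorname{Tor}^\Lambda_p(\operatorname{Tor}^S_q(M,\Lambda),k)\Rightarrow\operatorname{Tor}^S_{p+q}(M,k)$ abuts to $\operatorname{Tor}^S$. It therefore controls $S$-Betti numbers by $\Lambda$-data, which is the opposite of what you need.
\item A spectral sequence with $E^2=\operatorname{Tor}^\Lambda(k,k)\otimes\operatorname{Tor}^S(M,k)$ converging to $\operatorname{Tor}^\Lambda(M,k)$ is not a standard tool. You neither construct it nor even specify its abutment.
\item The Eisenbud--Shamash construction requires the defining relations of $\Lambda$ to form a regular sequence, which fails for a general Koszul $\Lambda$.
\end{itemize}
So as it stands, Step~1 rests entirely on \cite{3}.

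If you want Step~1 without appealing to \cite{3}, use local cohomology (compare \cite{29}). Let $\mathfrak m=\Lambda_{+}$. Tensoring the \v{C}ech complex on generators of $\mathfrak m$ with $k$ gives $k$, so $k\otimes^{\mathbf L}_\Lambda M\cong k\otimes^{\mathbf L}_\Lambda\mathbf R\Gamma_{\mathfrak m}(M)$. This yields a convergent spectral sequence $\operatorname{Tor}^\Lambda_p(k,H^q_{\mathfrak m}(M))\Rightarrow\operatorname{Tor}^\Lambda_{p-q}(k,M)$. Computing $\operatorname{Tor}^\Lambda_p(k,N)$ with the linear resolution of $k$ shows that it vanishes in internal degrees $>p+a(N)$, where $a(N)=\max\{j : N_j\neq 0\}$. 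Hence $\operatorname{Tor}^\Lambda_i(k,M)_j=0$ for $j>i+\max_q\bigl(a(H^q_{\mathfrak m}(M))+q\bigr)=i+\operatorname{reg}_S M$, the last equality by \cite{22}. This gives $\operatorname{reg}_\Lambda M\le\operatorname{reg}_S M<\infty$ directly, and your Step~2 then finishes the proof.
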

We now prove the main result of this section, which gives a Koszul-dual
description of the bounded derived category
\(\mathsf{D}^{b}\!\bigl(\operatorname{Coh}(X)\bigr)\). 

\begin{theorem} Let \(\Lambda\) be a Koszul quotient of a polynomial algebra, and set \[ X=\operatorname{Proj}(\Lambda). \] Then \(\Lambda^{!}\) is absolutely coKoszul, and there are triangulated equivalences \[ \mathsf{D}^{b}\!\bigl(\operatorname{coh}(X)\bigr) \cong \mathsf{D}^{b}\!\bigl(\Lambda^{!}\textup{-}\mathrm{Acl}^{\mathbb{Z}}\bigr) \big/ \mathsf{D}^{b}\!\bigl(\Lambda^{!}\textup{-}\mathrm{Cop}^{\mathbb{Z}}\bigr) \cong \mathsf{S}\!\bigl(\Lambda^{!}\textup{-}\overline{\mathrm{Acl}^{\mathbb{Z}}}\bigr). \] Under the first equivalence, the category \(\operatorname{coh}(X)\) identifies with the category consisting of cohomological shifts of colinear \(\Lambda^{!}\)-modules of infinite injective dimension. Moreover, every object of the Verdier quotient \[ \mathsf{D}^{b}\!\bigl(\Lambda^{!}\textup{-}\mathrm{Acl}^{\mathbb{Z}}\bigr) \big/ \mathsf{D}^{b}\!\bigl(\Lambda^{!}\textup{-}\mathrm{Cop}^{\mathbb{Z}}\bigr) \] is isomorphic to a cohomological shift of a weakly coKoszul \(\Lambda^{!}\)-module. \end{theorem}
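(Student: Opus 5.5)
The plan is to reduce everything to the singular Koszul duality of Corollary~3.4 and the results of Subsection~3.3, with \(\Lambda\) a commutative noetherian Koszul algebra.

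First, conditions (i) and (ii) of Subsection~3.3 hold for \(\Lambda\). Condition (ii) is Lemma~4.1 (Avramov--Eisenbud): finitely generated, hence finitely presented, graded \(\Lambda\)-modules are almost linear. For condition (i) I would use the following argument. Given a finitely generated \(N\), Lemma~4.1 gives \(r\) with \(N_{\geq r}\langle r\rangle\) Koszul, so \(N_{\geq r}\) has a linear resolution. The quotient \(N/N_{\geq r}\) is finite-dimensional, and over a Koszul algebra its syzygies become linear after finitely many steps; for a module with a finite filtration by shifted simples this follows from a horseshoe-type argument on the regularity of the simples. Comparing the resolutions through the short exact sequence \(0\to N_{\geq r}\to N\to N/N_{\geq r}\to 0\) should then show that some syzygy \(\Omega^{m}N\) is weakly Koszul, that is, its linear strand is exact. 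This is the step I expect to be the main obstacle: weak Koszulity of a high syzygy is not formal from almost linearity. If this direct comparison fails, I would fall back on the known result that commutative Koszul algebras with finite linearity defect (e.g.\ via Herzog--Iyengar and Iwanaga's linearity defect results cited as \cite{51,52}) are absolutely Koszul, or restrict the claim accordingly. Once (i) and (ii) hold, Proposition~3.12 gives that \(\Lambda^{!}\) is absolutely coKoszul and that \(\Lambda^{!}\textup{-Acl}^{\mathbb Z}=\Lambda^{!}\textup{-Fcp}^{\mathbb Z}\).

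Second, the equivalences. Since \(\Lambda\) is noetherian, \(\Lambda\textup{-Al}^{\mathbb Z}=\Lambda\textup{-Fp}^{\mathbb Z}\). Serre's theorem gives \(\operatorname{coh}(X)\simeq\operatorname{qgr}(\Lambda)\). The second equivalence of Corollary~3.14 then yields
\[
\mathsf{D}^{b}\bigl(\operatorname{coh}(X)\bigr)\simeq\mathsf{D}^{b}\bigl(\Lambda^{!}\textup{-Acl}^{\mathbb Z}\bigr)\big/\mathsf{D}^{b}\bigl(\Lambda^{!}\textup{-Cop}^{\mathbb Z}\bigr).
\]
To see this directly, note that under \(\mathfrak{G}\) the modules in \(\Lambda\textup{-gmod}\) correspond to bounded complexes of finitely cogenerated injectives, i.e.\ to \(\mathsf{K}^{b}(\Lambda^{!}\textup{-Inj}^{\mathbb Z})\simeq\mathsf{D}^{b}(\Lambda^{!}\textup{-Cop}^{\mathbb Z})\). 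Then combine this with the quotient equivalence \(\mathsf{D}^{b}(\mathcal B)/\mathsf{D}^{b}_{\mathcal A}(\mathcal B)\simeq\mathsf{D}^{b}(\mathcal B/\mathcal A)\). The second isomorphism is the stabilization property \(\mathsf S(\overline{\mathcal D})\simeq\mathsf D^{b}(\mathcal D)/\mathsf K^{b}(\operatorname{Inj}\mathcal D)\) applied to \(\mathcal D=\Lambda^{!}\textup{-Acl}^{\mathbb Z}\). Here one needs \(\operatorname{Inj}(\mathcal D)=\Lambda^{!}\textup{-Inj}^{\mathbb Z}\), so that \(\mathsf K^{b}(\operatorname{Inj}\mathcal D)\) is identified with \(\mathsf{D}^{b}(\Lambda^{!}\textup{-Cop}^{\mathbb Z})\).

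Third, the description of objects. A sheaf \(\mathcal F\in\operatorname{coh}(X)\) is represented by a finitely generated module \(N\). Replacing \(N\) by the truncation \(N_{\geq r}\), which is isomorphic in \(\operatorname{qgr}\), we may assume \(N_{\geq r}\langle r\rangle\) is Koszul. Proposition~3.1 then gives that \(\mathfrak G(N_{\geq r})\) is a shifted colinear \(\Lambda^{!}\)-module. It has infinite injective dimension exactly when \(\mathcal F\neq0\), because finite injective dimension would place it in \(\mathsf{D}^{b}(\Lambda^{!}\textup{-Cop}^{\mathbb Z})\), i.e.\ \(N\) would be finite-dimensional. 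For an arbitrary object of the quotient, I would first represent it by a bounded complex of finitely generated \(\Lambda\)-modules. Next I would take a high syzygy, which is weakly Koszul by absolute Koszulity, and apply Lemma~3.10 to see that its image under \(\mathfrak G\), modulo coperfect pieces, is concentrated in a single degree. The module obtained this way is weakly coKoszul by the dual statement that \(\Lambda^{!}\) is absolutely coKoszul. I expect this last bookkeeping to be delicate but routine compared to the first step.
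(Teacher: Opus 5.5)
Your treatment of the two equivalences and of the objects coming from $\operatorname{coh}(X)$ is essentially the paper's argument: Theorem~3.2 with $\Lambda\textup{-Al}^{\mathbb Z}=\Lambda\textup{-Fg}^{\mathbb Z}$ by Lemma~4.1, matching coperfect objects with finite-dimensional ones, the quotient equivalence and Serre's theorem, then truncation and Proposition~3.1. None of this needs condition~(i). So keep your direct argument and drop the appeal to Corollary~3.14, which assumes (i).

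The genuine gap is your first step. You are right that Proposition~3.12 needs (i), which Lemma~4.1 does not supply, but your comparison argument cannot supply it either. Almost linearity of all finitely generated modules amounts to finite Castelnuovo--Mumford regularity, i.e.\ bounded windows of generating degrees for the syzygies. A weakly Koszul syzygy means finite linearity defect, which is strictly stronger. What you feed into the horseshoe comparison (a linear resolution of $N_{\geq r}$ and a regularity bound for $N/N_{\geq r}$) is purely regularity information. It says nothing about exactness of the linear strand of the minimal resolution of $N$. Even your intermediate claim fails: the syzygies of $S_x\oplus S_x\langle -1\rangle$ are generated in two degrees and never become linear when $\Lambda$ has infinite global dimension; only the regularity is finite. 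In fact no argument can prove (i) for all Koszul quotients of polynomial rings. Roos exhibited commutative Koszul algebras with finitely generated modules of infinite linearity defect, answering a question of Herzog and Iyengar negatively (see \cite{20}). Your fallback is circular, since ``absolutely Koszul'' means exactly that every finitely generated module has finite linearity defect.

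The paper's proof does not close this gap either. It derives absolute coKoszulity of $\Lambda^{!}$ in one line from Lemma~4.1 and Proposition~3.12, although Lemma~4.1 gives only (ii). So the first assertion needs an extra hypothesis (that $\Lambda$ is absolutely Koszul, as the polynomial ring trivially is) or a genuinely new argument.

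The final assertion, by contrast, follows from (ii) alone if you argue on the $\Lambda^{!}$-side rather than through high syzygies over $\Lambda$, since your route there again invokes (i).
\begin{itemize}
\item Every object of the Verdier quotient is isomorphic to $M[j]$ for some almost colinear $M$ (a high cosyzygy in an injective coresolution), and $M\cong\Omega^{-m}M[-m]$ there for all $m\geq 0$.
\item Since $M$ is almost colinear, $\mathfrak{F}(M)=K(M)$ has bounded cohomology.
\item Its cohomology modules are finitely generated over the noetherian algebra $\Lambda$, hence almost linear by Lemma~4.1.
\item Lemma~3.10(i) then shows that the colinear strand of the minimal injective resolution of $M$ has bounded cohomology, so $\Omega^{-m}M$ is weakly coKoszul for $m\gg 0$.
\end{itemize}
This is precisely the second half of the proof of Proposition~3.12, which uses (ii) but not (i).
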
\begin{proof} The absolute co-Koszulity of \(\Lambda^{!}\) follows from Lemma~4.1 and Proposition~3.12. By Lemma~4.1, the category \(  \Lambda\textup{-}\mathrm{Al}^{\mathbb{Z}} \) coincides with \( \Lambda\textup{-}\mathrm{Fg}^{\mathbb{Z}}, \) the category of finitely generated graded \(\Lambda\)-modules. Hence the graded derived Koszul duality of Theorem~3.2 specializes to a triangulated equivalence \[ \begin{tikzcd}[column sep=huge] \mathsf{D}^{b}\!\bigl(\Lambda^{!}\textup{-}\mathrm{Acl}^{\mathbb{Z}}\bigr) \arrow[r, bend left=35, "\mathfrak{F}"] & \mathsf{D}^{b}\!\bigl(\Lambda\textup{-}\mathrm{Fg}^{\mathbb{Z}}\bigr) \arrow[l, bend left=35, "\mathfrak{G}"'] \end{tikzcd} \] Under this equivalence, the category of coperfect complexes on the left-hand side corresponds to the bounded derived category of finite-dimensional graded \(\Lambda\)-modules. Therefore the equivalence descends to the corresponding Verdier quotients and gives \[ \mathsf{D}^{b}\!\bigl(\Lambda^{!}\textup{-}\mathrm{Acl}^{\mathbb{Z}}\bigr) \big/ \mathsf{D}^{b}\!\bigl(\Lambda^{!}\textup{-}\mathrm{Cop}^{\mathbb{Z}}\bigr) \cong \mathsf{D}^{b}\!\bigl(\Lambda\textup{-}\mathrm{Fg}^{\mathbb{Z}}\bigr) \big/ \mathsf{D}^{b}\!\bigl(\Lambda\textup{-}\mathrm{gmod}\bigr). \] Since \( \operatorname{qgr}(\Lambda) = \Lambda\textup{-}\mathrm{Fg}^{\mathbb{Z}} \big/ \Lambda\textup{-}\mathrm{gmod}, \) the quotient on the right-hand side identifies with \(  \mathsf{D}^{b}\!\bigl(\operatorname{qgr}(\Lambda)\bigr). \) Combining these identifications, we obtain the desired triangulated equivalence \[ \mathsf{D}^{b}\!\bigl(\operatorname{coh}(X)\bigr) \cong \mathsf{D}^{b}\!\bigl(\Lambda^{!}\textup{-}\mathrm{Acl}^{\mathbb{Z}}\bigr) \big/ \mathsf{D}^{b}\!\bigl(\Lambda^{!}\textup{-}\mathrm{Cop}^{\mathbb{Z}}\bigr). \] It remains to identify the objects corresponding to coherent sheaves. By assumption, every object of \(\operatorname{qgr}(\Lambda)\) is isomorphic to a linear \(\Lambda\)-module. By Proposition~3.1, the functor \(\mathfrak{G}\) sends linear \(\Lambda\)-modules to cohomological shifts of colinear \(\Lambda^{!}\)-modules. Therefore, under the above equivalence, the category \(\operatorname{coh}(X)\) identifies with the category consisting of cohomological shifts of colinear \(\Lambda^{!}\)-modules. Finally, since \(\Lambda^{!}\) is absolutely co-Koszul, every almost-colinear \(\Lambda^{!}\)-module has a weakly co-Koszul cosyzygy. Hence, after passing to the Verdier quotient by \(  \mathsf{D}^{b}\!\bigl(\Lambda^{!}\textup{-}\mathrm{Cop}^{\mathbb{Z}}\bigr), \) every object is isomorphic to a cohomological shift of a weakly co-Koszul \(\Lambda^{!}\)-module. This proves the final assertion. \end{proof}
When
\( 
X=\mathbb{P}^{n}
=
\operatorname{Proj}\!\bigl(k[x_{0},\dots,x_{n}]\bigr),
\)
the preceding theorem recovers the classical
Bernstein--Gelfand--Gelfand correspondence. In this case, the Koszul dual is
the exterior algebra
\[
E=\bigwedge V,
\qquad
V=k^{n+1}.
\]
In particular, \(E\) is finite-dimensional and self-injective. Hence
Theorem~4.2 specializes to the following statement.

\begin{corollary}
There are triangulated equivalences
\[
\mathsf{D}^{b}\!\bigl(\operatorname{Coh}(\mathbb{P}^{n})\bigr)
\cong
\mathsf{D}^{b}\!\bigl(E\textup{-}\mathrm{gmod}\bigr)
\big/
\mathsf{D}^{b}\!\bigl(E\textup{-Cop}^{\mathbb{Z}}\bigr)
\cong \mathsf{S}\!\bigl(E\textup{-}\overline{\mathrm{gmod}}\bigr)\cong 
E\textup{-}\underline{\mathrm{gmod}}.
\]
\end{corollary}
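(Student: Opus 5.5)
This corollary is the special case \(\Lambda=k[x_0,\dots,x_n]\) of Theorem~4.2. The plan is to specialize each term of that theorem using the finiteness of \(E=\Lambda^{!}=\bigwedge V\), and then to identify the stabilization with the classical stable category.

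\textbf{Step 1: identify the categories over \(E\).}
The polynomial algebra is a commutative noetherian Koszul quotient of itself, with Koszul dual \(E\). Since \(E\) is finite-dimensional, every right bounded locally finite module over it is finite-dimensional once it is almost colinear. Dually to the argument in Corollary~3.3, \(E\textup{-}\mathrm{Acl}^{\mathbb Z}\) therefore coincides with \(E\textup{-}\mathrm{gmod}\). Two inclusions are needed:
\begin{itemize}
\item Every finite-dimensional graded \(E\)-module is left bounded. Its truncation \(M_{\le r}\) for \(r\) below the lowest degree is zero, and hence trivially colinear. So \(E\textup{-}\mathrm{gmod}\subseteq E\textup{-}\mathrm{Acl}^{\mathbb Z}\).
\item Conversely, an almost colinear module \(M\) has finite-dimensional tail \(M_{>r}\). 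Its colinear part \(M_{\le r}\) has a finite copresentation by shifted injectives \(I_x\), which are finite-dimensional. So \(M\) is finite-dimensional.
\end{itemize}
Substituting into Theorem~4.2 gives
\[
\mathsf D^b(\operatorname{Coh}\mathbb P^n)\cong \mathsf D^b(E\textup{-}\mathrm{gmod})/\mathsf D^b(E\textup{-}\mathrm{Cop}^{\mathbb Z})\cong \mathsf S\bigl(E\textup{-}\overline{\mathrm{gmod}}\bigr).
\]

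\textbf{Step 2: identify the stabilization with \(E\textup{-}\underline{\mathrm{gmod}}\).}
Since \(E\) is self-injective, \(E\textup{-}\mathrm{gmod}\) is a Frobenius exact category: its projectives and injectives coincide and are the finite sums of shifted \(E\). Consequently \(E\textup{-}\overline{\mathrm{gmod}}=E\textup{-}\underline{\mathrm{gmod}}\), and the cosyzygy functor \(\Omega^{-1}\) is an autoequivalence of it. The stable category of a Frobenius category is already triangulated with suspension \(\Omega^{-1}\). So its stabilization in the sense of \cite{7}, which formally inverts \(\Omega^{-1}\), is canonically equivalent to it.

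As a cross-check, Buchweitz's theorem for the self-injective (hence Iwanaga--Gorenstein) algebra \(E\) gives
\[
\mathsf D^b(E\textup{-}\mathrm{gmod})/\mathsf K^b(E\textup{-}\mathrm{inj}^{\mathbb Z})\simeq E\textup{-}\underline{\mathrm{gmod}},
\]
because all finite-dimensional modules are Gorenstein projective. This agrees with the basic property \(\mathsf S(\overline{\mathcal D})\simeq \mathsf D^b(\mathcal D)/\mathsf K^b(\operatorname{Inj}\mathcal D)\) quoted earlier.

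\textbf{Step 3: the expected obstacle.}
The delicate point is consistency of notation. Under the paper's definition, \(\mathsf D^b(E\textup{-}\mathrm{Cop}^{\mathbb Z})\) must be read as the thick subcategory \(\mathsf K^b(E\textup{-}\mathrm{inj}^{\mathbb Z})\) of coperfect complexes inside \(\mathsf D^b(E\textup{-}\mathrm{gmod})\). For a self-injective algebra, a finite-dimensional module is coperfect exactly when it is injective. I would verify that the first equivalence of Theorem~4.2 really quotients by this subcategory, namely the image of \(\mathsf D^b(\Lambda\textup{-}\mathrm{gmod})\) under \(\mathfrak G\); this uses the fact that \(\mathfrak G\) sends simples to injectives. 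That verification is what makes the Buchweitz identification legitimate, and it completes the chain of equivalences.
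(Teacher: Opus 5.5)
Your proposal is correct and follows the paper's route: specialize Theorem~4.2 to \(\Lambda=k[x_0,\dots,x_n]\), using that \(E\) is finite-dimensional and self-injective. Finite-dimensionality gives \(E\textup{-}\mathrm{Acl}^{\mathbb Z}=E\textup{-}\mathrm{gmod}\) and \(E\textup{-}\mathrm{Cop}^{\mathbb Z}=E\textup{-}\mathrm{inj}^{\mathbb Z}\), self-injectivity makes the stabilization of the Frobenius stable category equal to \(E\textup{-}\underline{\mathrm{gmod}}\) itself, and your Steps~1--3 simply spell out these identifications, which the paper leaves implicit.
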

\subsection{Koszulity in Noncommutative Geometry} 

Beilinson~\cite{4} proved that the bounded derived category of coherent
sheaves on projective space admits a full strong exceptional sequence. As a
consequence, one obtains a triangulated equivalence
\[
\mathsf{D}^{b}\!\bigl(\operatorname{Coh}(\mathbb{P}^{d})\bigr)
\simeq
\mathsf{D}^{b}\!\bigl(B_{d}\textup{-}\mathrm{mod}\bigr),
\]
where \(B_{d}\) is the corresponding Beilinson algebra. In particular,
\(B_{d}\) is finite-dimensional and has finite global dimension. This
equivalence is commonly referred to as Beilinson's theorem.

The aim of this section is to establish an analogue of Beilinson's theorem
in a noncommutative setting, using the quotient categories introduced by
Artin and Zhang. More precisely, for a finite-dimensional self-injective
Koszul algebra \(\Lambda\), we study the bounded derived category
\( 
\mathsf{D}^{b}\!\bigl(\operatorname{qgr}(\Lambda^{!})\bigr)
\)
and show that it admits a full strong exceptional sequence whose direct sum
is a tilting object. We further identify the corresponding endomorphism
algebra as a finite-dimensional Koszul algebra of finite global dimension.

We first recall the notion of generalized Artin--Schelter regularity that
will be used below. Let
\(A=kQ/I\), where \(Q\) is finite and \(I\) is homogeneous. We say that
\(A\) is \emph{generalized Artin--Schelter regular}, or simply
\emph{generalized AS-regular}, of dimension \(d\) if the following
conditions hold:
\begin{enumerate}
\item \(\operatorname{gldim}A=d<\infty\).

\item For every graded simple \(A\)-module \(S\),
\( 
\operatorname{Ext}^{i}_{A}(S,A)=0
\qquad\text{for all }i\neq d.
\)

\item The functor
\( 
\operatorname{Ext}^{d}_{A}(-,A)
\)
induces a bijection between the isomorphism classes of graded simple
\(A\)-modules and those of graded simple \(A^{\mathrm{op}}\)-modules.
\end{enumerate}

This notion extends the classical notion of
Artin--Schelter regularity and is particularly well suited to nonconnected
graded algebras arising from bound quivers; see~\cite{34,35,36}.
Classical Artin--Schelter regular algebras were introduced as
noncommutative analogues of polynomial algebras and form a fundamental
class in noncommutative algebraic geometry; see~\cite{1,2}. Generalized
AS-regular algebras provide a broader framework in which analogous
homological phenomena persist.

This framework is particularly natural from the viewpoint of Koszul
duality. Indeed, for a Koszul algebra of finite global dimension,
generalized AS-regularity is equivalent to self-injectivity of its Koszul
dual; see~\cite{34,35}. Consequently, if \(\Lambda\) is a finite-dimensional
self-injective Koszul algebra, then its Koszul dual \(\Lambda^{!}\) is a
generalized AS-regular Koszul algebra. Thus, the class of algebras
considered here arises naturally, through Koszul duality, from
finite-dimensional self-injective Koszul algebras.

Since \(\operatorname{qgr}(\Lambda^{!})\) plays the role of the category of
coherent sheaves on a noncommutative projective space, the resulting
description of
\( 
\mathsf{D}^{b}\!\bigl(\operatorname{qgr}(\Lambda^{!})\bigr)
\)
may be viewed as a noncommutative analogue of Beilinson's description of
\( 
\mathsf{D}^{b}\!\bigl(\operatorname{Coh}(\mathbb{P}^{d})\bigr).
\)

We next recall the notions of exceptional sequences, full strong
exceptional sequences, and tilting objects. Let \(\mathcal{T}\) be a
\(k\)-linear triangulated category. An object \(E\in\mathcal{T}\) is called
\emph{exceptional} if
\( 
\operatorname{Hom}_{\mathcal{T}}(E,E)\cong k
\)
and
\( 
\operatorname{Hom}_{\mathcal{T}}(E,E[i])=0\)
\(\text{for all }i\neq0.
\)

A sequence
\( 
(E_{0},E_{1},\ldots,E_{m})
\)
of exceptional objects is called an \emph{exceptional sequence} if
\( 
\operatorname{Hom}_{\mathcal{T}}(E_{j},E_{i}[r])=0
\)
for all \(j>i\) and all \(r\in\mathbb{Z}\). It is called \emph{strong} if,
in addition,
\( 
\operatorname{Hom}_{\mathcal{T}}(E_{i},E_{j}[r])=0
\)
for all \(i,j\) and all \(r\neq0\), and it is called \emph{full} if
\( 
\operatorname{thick}(E_{0},E_{1},\ldots,E_{m})=\mathcal{T}.
\)

An object \(T\in\mathcal{T}\) is called a \emph{tilting object} if
\( 
\operatorname{Hom}_{\mathcal{T}}(T,T[i])=0,\) 
\(\text{for all }i\neq0
\)
and
\( 
\operatorname{thick}(T)=\mathcal{T}.
\)
Thus, if
\( 
(E_{0},E_{1},\ldots,E_{m})
\)
is a full strong exceptional sequence, then
\( 
T=\bigoplus_{i=0}^{m}E_{i}
\)
is a tilting object.

Bondal~\cite{9} showed that if 
\(\mathcal{T}\) is generated by a full strong exceptional sequence
\((E_{0},E_{1},\ldots,E_{m})\), then
\[
\mathcal{T}
\simeq
\mathsf{D}^{b}\!\bigl(B\textup{-}\mathrm{mod}\bigr),
\]
where \(T=\bigoplus_{i=0}^{m}E_i\),
\(B=\operatorname{End}_{\mathcal{T}}(T)\) is a finite-dimensional
algebra of finite global dimension, and the equivalence is induced by the functor
\( 
\mathbf{R}\!\operatorname{Hom}_{\mathcal{T}}(T,-).
\)

Bondal also investigated conditions under which the algebra \(B\) is Koszul.
Such endomorphism algebras play an important role in the study of derived
categories of coherent sheaves.

Following~\cite{9}, we call a finite quiver \(Q\) \emph{ordered} if its
vertices can be arranged in such a way that every arrow is oriented from
a smaller vertex to a larger one. In particular, an ordered quiver is
acyclic, and every finite-dimensional bound path algebra associated with
such a quiver has finite global dimension. In~\cite{12}, we called such a
quiver \emph{well directed}.

In order to establish a relationship between the quivers of \(\Lambda^{!}\)
and \(B\), we recall some well-known facts about graded algebras. To any
graded algebra \(A\), one can associate a \(\mathbb Z\)-category
\(A^{\mathbb Z}\), and the category \(A\textup{-}\mathrm{GMod}\) is
equivalent to the category \(A^{\mathbb Z}\textup{-}\mathrm{Mod}\) of
locally finite-dimensional \(A^{\mathbb Z}\)-modules.

To illustrate this construction, let \(A\) be the graded algebra associated
with the quiver
\[
\begin{tikzcd}[row sep=large, column sep=large]
x \arrow[r,"\alpha"]
& y \arrow[loop right, distance=8mm, "\beta"]
\end{tikzcd}
\]
subject to the relation
\[
\beta^2=0.
\]
Then the quiver \(Q^{\mathbb Z}\) of \(A^{\mathbb Z}\) is given by
\[
\begin{tikzcd}
\cdots & (x,-1) \arrow[dr,"{(\alpha,-1)}"]
& (x,0) \arrow[dr,"{(\alpha,0)}"]
& (x,1) \arrow[dr,"{(\alpha,1)}"]
& (x,2)
& \cdots \\
\cdots & (y,-1) \arrow[r,"{(\beta,-1)}"]
& (y,0) \arrow[r,"{(\beta,0)}"]
& (y,1) \arrow[r,"{(\beta,1)}"]
& (y,2)
& \cdots
\end{tikzcd}
\]
with relations
\[
(\beta,n)(\beta,n-1)=0
\qquad\text{for all }n\in\mathbb Z.
\]

Note that \(A^{\mathbb Z}\) is a directed \(k\)-linear category in the sense
of~\cite{26}.

Let \(A=kQ/I\) be a graded algebra, and let \(A^{\mathbb Z}\) denote its
associated \(\mathbb Z\)-category. A full subcategory
\(\mathcal{C}\subseteq A^{\mathbb Z}\) is called \emph{convex} if, whenever
\(x,y\in\operatorname{Ob}(\mathcal{C})\) and
\[
x\leq z\leq y
\]
for some \(z\in\operatorname{Ob}(A^{\mathbb Z})\), then
\(z\in\operatorname{Ob}(\mathcal{C})\).

Equivalently, if a path in the quiver of \(A^{\mathbb Z}\) starts and ends
at vertices of \(\mathcal{C}\), then every intermediate vertex of that path
also belongs to \(\mathcal{C}\).

We call the algebra associated with a finite full convex subcategory of
\(A^{\mathbb Z}\) a \emph{convex truncation algebra} of \(A^{\mathbb Z}\).
By~\cite[Theorem~3.5]{26}, \(A^{\mathbb Z}\) is Koszul if and only if every
full convex subcategory of \(A^{\mathbb Z}\) is Koszul. In particular, every
convex truncation algebra of a Koszul \(\mathbb Z\)-category
\(A^{\mathbb Z}\) is Koszul.

We are now in a position to prove the main result of this subsection.

\begin{theorem}
Let \(\Lambda\) be a finite-dimensional self-injective Koszul algebra, so that its Koszul dual \(\Lambda^{!}\) is generalized AS-regular. Then there exists a finite-dimensional Koszul algebra \(B\) of finite global dimension, whose quiver is ordered, together with triangulated equivalences
\[
\mathsf{D}^{b}\!\bigl(\operatorname{qgr}(\Lambda^{!})\bigr)
\simeq
\mathsf{D}^{b}\!\bigl(B\textup{-}\mathrm{mod}\bigr)
\simeq
\mathsf{D}^{b}\!\bigl(B^{!}\textup{-}\mathrm{mod}\bigr).
\]
\end{theorem}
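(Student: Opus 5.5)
Here is the plan. I would first pass through Koszul duality, turning $\mathsf{D}^{b}(\operatorname{qgr}(\Lambda^{!}))$ into a stable category over the finite-dimensional self-injective algebra $\Lambda$. I would then cut that stable category down to a finite convex piece of the $\mathbb Z$-category $\Lambda^{\mathbb Z}$, in the spirit of Happel's theorem and the BGG picture.

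\emph{Step 1.} Since $\Lambda$ is finite-dimensional and self-injective, $\Lambda^{!}$ is generalized AS-regular. In particular $\Lambda^{!}$ is noetherian of finite global dimension, so finitely presented and perfect modules coincide and $\mathsf{D}^{b}(\Lambda^{!}\textup{-Fp}^{\mathbb Z})=\mathsf{K}^{b}(\Lambda^{!}\textup{-Proj}^{\mathbb Z})$. I would apply the dual form of Corollary~3.3, namely $\mathsf{D}^{b}(\Lambda\textup{-gmod})\simeq\mathsf{D}^{b}(\Lambda^{!}\textup{-Pe}^{\mathbb Z})$. Under this equivalence, finite-dimensional $\Lambda^{!}$-modules correspond to $\mathsf{K}^{b}(\Lambda\textup{-inj}^{\mathbb Z})=\mathsf{K}^{b}(\Lambda\textup{-proj}^{\mathbb Z})$, by the image computations of Proposition~2.2. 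Passing to Verdier quotients as in Corollary~3.4 and using Buchweitz's theorem gives
\[
\mathsf{D}^{b}(\operatorname{qgr}(\Lambda^{!}))\simeq \mathsf{D}_{\mathrm{sg}}(\Lambda\textup{-gmod})\simeq \Lambda\textup{-}\underline{\mathrm{gmod}} .
\]

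\emph{Step 2.} Next I would realize $\Lambda\textup{-gmod}$ as modules over the $\mathbb Z$-category $\Lambda^{\mathbb Z}$. The key is to choose a finite full convex subcategory $\mathcal C\subseteq\Lambda^{\mathbb Z}$ consisting of the vertices $(x,n)$ with $0\le n<\ell$, where $\ell$ is the Loewy length of $\Lambda$ (a fundamental domain for the Nakayama/shift action). Set $B=\operatorname{End}(\bigoplus_{(x,n)\in\mathcal C}S_x\langle -n\rangle)$ computed in $\Lambda\textup{-}\underline{\mathrm{gmod}}$. Equivalently, $B$ is the convex truncation algebra of $(\Lambda^{!})^{\mathbb Z}$ on the window of degrees $0,\dots,\ell-1$, as in the classical Beilinson algebra. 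The quiver of $B$ is a finite piece of $Q^{\mathbb Z}$, in which all arrows raise the $\mathbb Z$-degree, so it is ordered and $B$ has finite global dimension. By \cite[Theorem~3.5]{26}, convex truncations of the Koszul category $(\Lambda^{!})^{\mathbb Z}$ are Koszul, so $B$ is Koszul.

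\emph{Step 3.} I would show that the objects $E_{(x,n)}$, the images of $\mathcal O(n)$-type twists of $\Lambda^{!}e_x$ in $\operatorname{qgr}$ for $0\le n<\ell$, form a full strong exceptional sequence ordered by $n$. Strongness and exceptionality would come from the local cohomology of $\Lambda^{!}$: AS-regularity (conditions 2 and 3) gives $\operatorname{Ext}^{i}_{\operatorname{qgr}}(\pi\Lambda^{!}e_x,\pi\Lambda^{!}e_y\langle m\rangle)=0$ for $i>0$ in the window $-\ell<m<\ell$. Here the Gorenstein parameter must equal $\ell$, because the Koszul dual of a self-injective algebra of Loewy length $\ell$ has global dimension $\ell-1$. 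For fullness, I would use the Koszul resolutions of the simples $S^{!}_x$, which vanish in $\operatorname{qgr}$; these express every twist $\pi\Lambda^{!}e_y\langle m\rangle$ through those in the window, and the twists of free modules generate $\operatorname{qgr}$. Bondal's theorem then gives $\mathsf{D}^{b}(\operatorname{qgr}(\Lambda^{!}))\simeq\mathsf{D}^{b}(B\textup{-mod})$. Here $B=\operatorname{End}(\bigoplus E_{(x,n)})$ equals the degree-$[0,\ell)$ convex truncation above, because $\operatorname{Hom}$ between twists is $\Lambda^{!}_{m}$ in that window.

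\emph{Step 4.} Since $B$ is finite-dimensional, Koszul, and of finite global dimension, $B^{!}$ is also finite-dimensional. Its quiver is the opposite ordered quiver, so $B^{!}$ has finite global dimension. Corollary~3.3 then yields $\mathsf{D}^{b}(B\textup{-gmod})\simeq\mathsf{D}^{b}(B^{!}\textup{-Pe}^{\mathbb Z})=\mathsf{D}^{b}(B^{!}\textup{-gmod})$. To remove the grading, I would use that for an ordered quiver the grading is recovered from a vertex potential, so graded and ungraded derived categories agree up to forgetting the grading; this is the standard Beilinson $\leftrightarrow$ dual Beilinson duality.

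The main obstacle is Step 3: pinning down the vanishing window of $\operatorname{Ext}$ in $\operatorname{qgr}(\Lambda^{!})$, identifying the AS-index with the Loewy length of $\Lambda$, and proving fullness without commutativity. I would attack it via the equivalence with $\Lambda\textup{-}\underline{\mathrm{gmod}}$ from Step 1. There the objects $E_{(x,n)}$ correspond to the simples $S_x\langle -n\rangle$ (up to syzygy twists), so the vanishing statements reduce to graded $\underline{\operatorname{Hom}}$ computations between shifted simples over the self-injective algebra $\Lambda$, which are controlled by the linear resolutions.
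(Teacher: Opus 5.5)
There is a genuine gap, and it is where you expected the difficulty: the size of the window. You take the degrees $0\le n<\ell$, with $\ell$ the Loewy length of $\Lambda$, on the grounds that the Gorenstein parameter of $\Lambda^{!}$ equals $\ell$. It does not. Since $\Lambda_{\ell-1}\neq 0=\Lambda_\ell$, one has $d:=\operatorname{gldim}\Lambda^{!}=\ell-1$, as you say. The minimal resolution of each $S^{!}_x$ is linear, so its last term is generated in degree $d$, and the Gorenstein parameter is therefore $d=\ell-1$, not $\ell$. Your window thus contains $d+1$ levels, one too many, and the resulting collection is not exceptional.

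Your own fullness argument already shows this. The Koszul resolution of $S^{!}_x\langle \ell-1\rangle$ vanishes in $\operatorname{qgr}(\Lambda^{!})$, so it places $\pi P^{!}_x\langle \ell-1\rangle$ in the thick subcategory generated by the levels $0,\dots,\ell-2$. Dually, using condition~3 of generalized AS-regularity, each object of level $0$ lies in the thick subcategory generated by the levels $1,\dots,\ell-1$. In an exceptional sequence $(E_0,\dots,E_m)$, if $E_m\in\operatorname{thick}(E_0,\dots,E_{m-1})$ then $\operatorname{Hom}(E_m,E_m)=0$, and symmetrically for $E_0$. So ordering by $n$ in either direction contradicts $\operatorname{End}\cong k$. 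Concretely, for $\Lambda=\bigwedge k^{2}$ you have $\ell=3$, and your recipe gives $\mathcal O,\mathcal O(1),\mathcal O(2)$ on $\mathbb P^{1}$, where $\operatorname{Ext}^{1}(\mathcal O(2),\mathcal O)\cong H^{1}(\mathcal O(-2))\neq 0$. Moreover, the corresponding three-vertex truncation $B$ has $K_0$ of rank $3$, whereas $K_0(\mathbb P^{1})$ has rank $2$. Hence $\mathsf{D}^{b}(\operatorname{qgr}(\Lambda^{!}))\simeq\mathsf{D}^{b}(B\textup{-}\mathrm{mod})$ fails for your $B$.

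The fix is to use $d=\ell-1$ consecutive levels, $\{\pi P^{!}_x\langle -i\rangle \mid x\in Q_0,\ 0\le i\le d-1\}$, which is what the paper does. With that change your outline is essentially the paper's proof:
\begin{enumerate}
\item[(a)] singular Koszul duality from Corollary~3.4;
\item[(b)] fullness from the linear resolutions of the simples $S^{!}_x$;
\item[(c)] Hom-vanishing from Koszulity of $\Lambda$, transported along $\mathfrak F(S_x\langle i\rangle)\cong P^{!}_x\langle -i\rangle[i]$;
\item[(d)] Koszulity of $B$ as a convex truncation of $(\Lambda^{!})^{\mathbb Z}$, via \cite[Theorem~3.5]{26}.
\end{enumerate}
For the last equivalence the paper simply cites \cite[Corollary~3.12]{12}; your degrading argument via Corollary~3.3 is a reasonable substitute.

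Two smaller points should also be corrected. In Step~2 the objects of $\Lambda\textup{-}\underline{\mathrm{gmod}}$ must be $S_x\langle -n\rangle[n]$, the preimages of $\pi P^{!}_x\langle n\rangle$. The stable endomorphism ring of $\bigoplus S_x\langle -n\rangle$ itself is semisimple, because there are no nonzero degree-preserving maps between simples concentrated in different degrees; so your ``equivalently'' is false as stated. Also, $\Lambda^{!}$ need not be noetherian: for $n\ge 3$ the algebra $\Gamma$ of Example~4.6 has exponential growth. Step~1 should therefore rest on $\Lambda^{!}\textup{-}\mathrm{GMod}^{+,b}=\Lambda^{!}\textup{-Pe}^{\mathbb Z}$ from Corollary~3.3 rather than on noetherianity.
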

\begin{proof}
By Corollary~3.4, after interchanging the two triangulated categories
\(\mathsf{D}_{\mathrm{sg}}(\Lambda\textup{-}\mathrm{gmod})\) and
\(\mathsf{D}^{b}(\operatorname{qgr}(\Lambda^{!}))\), the singular Koszul
duality yields a triangulated equivalence
\[
\mathfrak{F}\colon
\mathsf{D}_{\mathrm{sg}}\!\bigl(\Lambda\textup{-}\mathrm{gmod}\bigr)
\xrightarrow{\sim}
\mathsf{D}^{b}\!\bigl(\operatorname{qgr}(\Lambda^{!})\bigr).
\]

Now, since \(\Lambda\) is self-injective, every \(\Lambda\)-module is
Gorenstein-projective, and in particular every simple \(\Lambda\)-module is
Gorenstein-projective. Hence, by Corollary~6.4.1(i) in~\cite{17}, for all
\(x,y\in Q_0\) and all \(i,j,n\in\mathbb Z\), there are natural isomorphisms
of \(k\)-vector spaces
\[
\operatorname{Hom}_{\mathsf{D}_{\mathrm{sg}}
(\Lambda\textup{-}\mathrm{gmod})}
\bigl(
S_x\langle i\rangle,
S_y\langle j\rangle[n]
\bigr)
\cong
\operatorname{Hom}_{\mathsf{D}^{b}
(\Lambda\textup{-}\mathrm{gmod})}
\bigl(
S_x\langle i\rangle,
S_y\langle j\rangle[n]
\bigr).
\]

Moreover, since \(\Lambda\) is Koszul,
\[
\operatorname{Hom}_{\mathsf{D}_{\mathrm{sg}}
(\Lambda\textup{-}\mathrm{gmod})}
\bigl(
S_x\langle i\rangle,
S_y\langle j\rangle[n]
\bigr)
=0
\]
for all \(n\neq i-j\).

Since
\[
\mathfrak{F}\bigl(S_x\langle i\rangle\bigr)
\cong
P_x^{!}\langle-i\rangle[i],
\]
it follows that
\[
\operatorname{Hom}_{\mathsf{D}^{b}
(\operatorname{qgr}(\Lambda^{!}))}
\bigl(
P_x^{!}\langle-i\rangle[i],
P_y^{!}\langle-j\rangle[j+n]
\bigr)
=0
\]
for all \(n\neq i-j\). Equivalently,
\[
\operatorname{Hom}_{\mathsf{D}^{b}
(\operatorname{qgr}(\Lambda^{!}))}
\bigl(
P_x^{!}\langle-i\rangle,
P_y^{!}\langle-j\rangle[j-i+n]
\bigr)
=0
\]
for all \(n\neq i-j\).

Invoking the derived Koszul duality
\[
\mathsf{D}^{b}\!\bigl(\Lambda\textup{-}\mathrm{gmod}\bigr)
\simeq
\mathsf{D}^{b}\!\bigl(\Lambda^{!}\textup{-}\mathrm{Pe}^{\mathbb Z}\bigr),
\]
we obtain
\[
\begin{aligned}
\operatorname{Hom}_{\mathsf{D}^{b}
(\operatorname{qgr}(\Lambda^{!}))}
\bigl(
P_x^{!}\langle-i\rangle,
P_y^{!}\langle-j\rangle
\bigr)
&\cong
\operatorname{Hom}_{\mathsf{D}^{b}
(\Lambda^{!}\textup{-}\mathrm{Pe}^{\mathbb Z})}
\bigl(
P_x^{!}\langle-i\rangle,
P_y^{!}\langle-j\rangle
\bigr)
\\
&\cong
\operatorname{Hom}_{\Lambda^{!}\textup{-}\mathrm{GMod}^{+}}
\bigl(
P_x^{!}\langle-i\rangle,
P_y^{!}\langle-j\rangle
\bigr).
\end{aligned}
\]

Now, since \(\Lambda^{!}\) has finite global dimension, say \(d\), each
simple graded \(\Lambda^{!}\)-module \(S_x^{!}\) generated in degree zero
admits a finite linear projective resolution
\[
0
\longrightarrow
P^{-d}
\longrightarrow
P^{-d+1}
\longrightarrow
\cdots
\longrightarrow
P^{-2}
\longrightarrow
P^{-1}
\longrightarrow
P^{0}
\longrightarrow
S_x^{!}
\longrightarrow
0.
\]
Since the quotient functor
\[
\pi\colon
\Lambda^{!}\textup{-}\mathrm{Pe}^{\mathbb Z}
\longrightarrow
\operatorname{qgr}(\Lambda^{!})
\]
is exact and \(S_x^{!}\) vanishes in
\(\operatorname{qgr}(\Lambda^{!})\), it sends the above resolution to an
exact complex
\[
0
\longrightarrow
\pi P^{-d}
\longrightarrow
\pi P^{-d+1}
\longrightarrow
\cdots
\longrightarrow
\pi P^{-2}
\longrightarrow
\pi P^{-1}
\longrightarrow
\pi P^{0}
\longrightarrow
0
\]
in \(\operatorname{qgr}(\Lambda^{!})\).

Since the resolution is linear, each \(P^{-k}\) is a finite direct sum of
modules of the form \(P_y^{!}\langle-k\rangle\), with \(y\in Q_0\).
Consequently, each \(\pi P^{-k}\) is a finite direct sum of objects of the
form
\( 
\pi P_y^{!}\langle-k\rangle.
\)

Applying the same argument to all grading shifts of the simple modules
shows recursively that every object of the form
\(\pi P_x^{!}\langle-r\rangle\), with \(r\in\mathbb Z\), belongs to the
triangulated subcategory generated by
\[
\left\{
\pi P_x^{!}\langle-i\rangle
\;\middle|\;
x\in Q_0,\ 0\leq i\leq d-1
\right\}.
\]
Hence the finite family
\[
\left\{
\pi P_x^{!}\langle-i\rangle
\;\middle|\;
x\in Q_0,\ 0\leq i\leq d-1
\right\},
\]
or, equivalently, the family
\[
\left\{
\pi\Lambda^{!}\langle-i\rangle
\;\middle|\;
0\leq i\leq d-1
\right\},
\]
generates
\(\mathsf{D}^{b}(\operatorname{qgr}(\Lambda^{!}))\)
as a triangulated category.

Moreover, the preceding Hom-vanishing, together with the degree-zero
Hom-identification above, shows that, after choosing a compatible ordering,
the objects
\[
\left\{
\pi P_x^{!}\langle-i\rangle
\;\middle|\;
x\in Q_0,\ 0\leq i\leq d-1
\right\}
\]
form a full strong exceptional collection in
\(\mathsf{D}^{b}(\operatorname{qgr}(\Lambda^{!}))\).

Set
\[
B=
\operatorname{End}_{\operatorname{qgr}(\Lambda^{!})}
\left(
\bigoplus_{i=0}^{d-1}\pi\Lambda^{!}\langle-i\rangle
\right)
\cong
\operatorname{End}_{\Lambda^{!}\textup{-}\mathrm{GMod}^{+}}
\left(
\bigoplus_{i=0}^{d-1}\Lambda^{!}\langle-i\rangle
\right).
\]
Then \(B\) is finite-dimensional, and its quiver is ordered. Hence \(B\)
has finite global dimension. 

It remains to prove that \(B\) is Koszul. By construction, \(B\) is a
convex truncation algebra of \((\Lambda^{!})^{\mathbb Z}\) determined by
\(d\) consecutive levels. Therefore, \(B\) is Koszul by
\cite[Theorem~3.5]{26}. The second equivalence follows from
\cite[Corollary~3.12]{12}.
\end{proof}
We now derive two consequences of Theorem~4.4. The first recovers Beilinson's theorem~\cite{4} and provides a new description of
\( 
\mathsf{D}^{b}\bigl(\operatorname{Coh}(\mathbb{P}^{d})\bigr)
\)
in terms of the Koszul dual of the Beilinson algebra \(B\), while the second recovers the noncommutative generalization due to Minamoto~\cite{40}.

Beilinson showed that
\[
\bigl(\mathcal{O},\mathcal{O}\langle-1\rangle,\ldots,
\mathcal{O}\langle-d\rangle\bigr)
\]
is a full strong exceptional sequence in
\(\mathsf{D}^{b}\!\bigl(\operatorname{Coh}(\mathbb{P}^{d})\bigr)\),
where \(\mathcal{O}=\mathcal{O}_{\mathbb{P}^{d}}\) is the structure sheaf
and the objects \(\mathcal{O}\langle-i\rangle\) are the corresponding
Serre twisting line bundles on \(\mathbb{P}^{d}\).
Applying Bondal's theorem~\cite{9} yields the derived equivalence
\[
\mathsf{D}^{b}\!\bigl(\operatorname{Coh}(\mathbb{P}^{d})\bigr)
\simeq
\mathsf{D}^{b}\!\bigl(B_d\textup{-}\mathrm{mod}\bigr),
\]
where \(B_d\) is the endomorphism algebra of the corresponding exceptional
sequence. We now show how this equivalence follows from Theorem~4.4.

\begin{example}
Consider the polynomial algebra
\[
S=
k\langle x_0,x_1,\ldots,x_d\rangle
\big/
\langle x_i x_j-x_j x_i \mid 0\leq i<j\leq d\rangle.
\]
This is an AS-regular algebra of global dimension \(d+1\), and hence is
generalized AS-regular. Its Koszul dual is the exterior algebra
\[
E=\bigwedge V,
\qquad
V=k^{d+1},
\]
which is finite-dimensional and self-injective. Moreover,
\[
\operatorname{Coh}(\mathbb{P}^{d})
\cong
\operatorname{qgr}(S).
\]

Hence, by Theorem~4.4,
\[
\mathsf{D}^{b}\!\bigl(\operatorname{qgr}(S)\bigr)
\simeq
\mathsf{D}^{b}\!\bigl(B\textup{-}\mathrm{mod}\bigr).
\]

As a triangulated category,
\(\mathsf{D}^{b}\!\bigl(\operatorname{qgr}(S)\bigr)\) is generated by the
full strong exceptional sequence
\[
\pi S,\;\pi S\langle-1\rangle,\;\pi S\langle-2\rangle,\ldots,\pi S\langle-d\rangle.
\]

The quiver \(Q^{\mathbb Z}\) of the \(\mathbb Z\)-category
\(S^{\mathbb Z}\) has vertices \((x,m)\), \(m\in\mathbb Z\), and between
two consecutive vertices \((x,m)\) and \((x,m+1)\) there are \(d+1\)
arrows
\[
\begin{tikzcd}[column sep=1.35cm]
\cdots
\arrow[r,bend left=18,"{(x_0,-2)}"]
\arrow[r,"\vdots" description]
\arrow[r,bend right=18,"{(x_d,-2)}"']
&
(x,-1)
\arrow[r,bend left=18,"{(x_0,-1)}"]
\arrow[r,"\vdots" description]
\arrow[r,bend right=18,"{(x_d,-1)}"']
&
(x,0)
\arrow[r,bend left=18,"{(x_0,0)}"]
\arrow[r,"\vdots" description]
\arrow[r,bend right=18,"{(x_d,0)}"']
&
(x,1)
\arrow[r,bend left=18,"{(x_0,1)}"]
\arrow[r,"\vdots" description]
\arrow[r,bend right=18,"{(x_d,1)}"']
&
(x,2)
\arrow[r,draw=none,"\cdots" description]
&
{}
\end{tikzcd}
\]

The relations are induced by the commutativity relations of \(S\), namely
\[
(x_j,m+1)(x_i,m)
-
(x_i,m+1)(x_j,m)
\]
for all \(m\in\mathbb Z\) and \(0\leq i<j\leq d\).

The full subcategory of \(S^{\mathbb Z}\) on the vertices
\[
(x,-d),(x,-d+1),\ldots,(x,-1),(x,0)
\]
is convex. The corresponding convex truncation algebra \(B\) has quiver
\[
\begin{tikzcd}[column sep=2.5cm]
(x,-d)
\arrow[r,bend left=18,"{(x_0,-d)}"]
\arrow[r,"\vdots" description]
\arrow[r,bend right=18,"{(x_d,-d)}"']
&
(x,-d+1)
\arrow[r,bend left=18,"{(x_0,-d+1)}"]
\arrow[r,"\vdots" description]
\arrow[r,bend right=18,"{(x_d,-d+1)}"']
&
\cdots
\arrow[r,bend left=18,"{(x_0,-1)}"]
\arrow[r,"\vdots" description]
\arrow[r,bend right=18,"{(x_d,-1)}"']
&
(x,0),
\end{tikzcd}
\]
subject to the relations
\[
(x_j,m+1)(x_i,m)
-
(x_i,m+1)(x_j,m),
\qquad
-d\leq m\leq -2,\quad 0\leq i<j\leq d.
\]

Thus \(B\) is the convex truncation of \(S^{\mathbb Z}\) determined by the
\(d+1\) consecutive vertices
\[
(x,-d),\ldots,(x,0).
\]

Since \(B\) is Koszul, its Koszul dual \(B^{!}\) is the quadratic dual of
\(B\). Let
\[
y_0,y_1,\ldots,y_d
\]
denote the basis dual to
\[
x_0,x_1,\ldots,x_d.
\]
The quiver of \(B^{!}\) has the same vertices as \(B\), with \(d+1\)
arrows between consecutive vertices, now oriented in the opposite
direction:
\[
\begin{tikzcd}[column sep=2.5cm]
(x,-d)
&
(x,-d+1)
\arrow[l,bend right=18,"{(y_0,-d)}"']
\arrow[l,"\vdots" description]
\arrow[l,bend left=18,"{(y_d,-d)}"]
&
\cdots
\arrow[l,bend right=18,"{(y_0,-d+1)}"']
\arrow[l,"\vdots" description]
\arrow[l,bend left=18,"{(y_d,-d+1)}"]
&
(x,0)
\arrow[l,bend right=18,"{(y_0,-1)}"']
\arrow[l,"\vdots" description]
\arrow[l,bend left=18,"{(y_d,-1)}"].
\end{tikzcd}
\]

The quadratic relations of \(B^{!}\) are induced by those of the exterior
algebra \(E=\bigwedge V\). Thus, for every admissible \(m\), they are
\[
(y_j,m)(y_i,m+1)
+
(y_i,m)(y_j,m+1),
\qquad
0\leq i<j\leq d,
\]
together with
\[
(y_i,m)(y_i,m+1),
\qquad
0\leq i\leq d.
\]

Equivalently, \(B^{!}\) is the finite convex truncation of the
\(\mathbb Z\)-category associated with the exterior algebra
\[
E=\bigwedge V
\]
on the vertices
\[
(x,-d),\ldots,(x,0).
\]

Consequently, applying Theorem~4.4 to the Koszul dual truncation gives
\[
\mathsf{D}^{b}\!\bigl(\operatorname{qgr}(S)\bigr)
\cong
\mathsf{D}^{b}\!\bigl(B^{!}\textup{-}\mathrm{mod}\bigr).
\]
Thus the bounded derived category of coherent sheaves on projective
space admits the description
\[
\mathsf{D}^{b}\!\bigl(\operatorname{Coh}(\mathbb P^{d})\bigr)
\cong
\mathsf{D}^{b}\!\bigl(B^{!}\textup{-}\mathrm{mod}\bigr).
\]
\end{example}
We conclude the paper with a further consequence due to Minamoto~\cite{40},
which has been regarded as a noncommutative Beilinson-type theorem.
\begin{example} We consider, for \(n\geq 2\), the algebra \[ \Gamma= k\langle x_1,x_2,\ldots,x_n\rangle \big/ \langle \textstyle\sum_{i=1}^{n}x_i^2\rangle, \] which is generalized AS-regular and Koszul. Its Koszul dual is the algebra \[ \Gamma^{!}= k\langle x_1,x_2,\ldots,x_n\rangle \Big/ \left\langle x_i x_j\ (i\neq j),\; x_i^2-x_2^2\ (i\neq 2) \right\rangle, \] which is finite-dimensional and self-injective~\cite{36}. Since \(\operatorname{gldim}\Gamma=2\), Theorem~4.4 implies that \(\mathsf{D}^{b}\!\bigl(\operatorname{qgr}(\Gamma)\bigr)\) admits the full strong exceptional sequence \[ \pi\Gamma,\;\pi\Gamma\langle-1\rangle. \] The quiver of the \(\mathbb Z\)-category \(\Gamma^{\mathbb Z}\) has one vertex \((x,m)\) at each \(m\in\mathbb Z\), with \(n\) arrows \[ (x_i,m)\colon (x,m)\longrightarrow (x,m+1), \qquad 1\leq i\leq n. \] Thus a portion of its quiver is \[ \begin{tikzcd}[column sep=2.2cm] \cdots \arrow[r,bend left=18,"{(x_1,-2)}"] \arrow[r,"\vdots" description] \arrow[r,bend right=18,"{(x_n,-2)}"'] & (x,-1) \arrow[r,bend left=18,"{(x_1,-1)}"] \arrow[r,"\vdots" description] \arrow[r,bend right=18,"{(x_n,-1)}"'] & (x,0) \arrow[r,bend left=18,"{(x_1,0)}"] \arrow[r,"\vdots" description] \arrow[r,bend right=18,"{(x_n,0)}"'] & (x,1) \arrow[r,draw=none,"\cdots" description] & {} \end{tikzcd} \] subject to the relations \[ \sum_{i=1}^{n}(x_i,m+1)(x_i,m), \qquad m\in\mathbb Z. \] The algebra \(B\) occurring in Theorem~4.4 is the convex truncation of \(\Gamma^{\mathbb Z}\) determined by the two consecutive vertices \((x,-1)\) and \((x,0)\). Hence its quiver is \[ \begin{tikzcd}[column sep=3cm] (x,-1) \arrow[r,bend left=22,"{(x_1,-1)}"] \arrow[r,bend left=10,"{(x_2,-1)}"] \arrow[r,"\vdots" description] \arrow[r,bend right=22,"{(x_n,-1)}"'] & (x,0). \end{tikzcd} \] Since this truncation contains no paths of length two, \(B\) has no nontrivial relations. In particular, \(B\) is the path algebra of the \(n\)-Kronecker quiver. Therefore, Theorem~4.4 yields \[ \mathsf{D}^{b}\!\bigl(\operatorname{qgr}(\Gamma)\bigr) \simeq \mathsf{D}^{b}\!\bigl(B\textup{-}\mathrm{mod}\bigr). \] Since \(B\) is Koszul, we may also consider its Koszul dual \(B^{!}\). The quiver of \(B^{!}\) has the same two vertices, with the \(n\) arrows oriented in the opposite direction: \[ \begin{tikzcd}[column sep=3cm] (x,-1) & (x,0) \arrow[l,bend right=22,"{(y_1,-1)}"'] \arrow[l,bend right=10,"{(y_2,-1)}"'] \arrow[l,"\vdots" description] \arrow[l,bend left=22,"{(y_n,-1)}"], \end{tikzcd} \] where \(y_1,\ldots,y_n\) denote the arrows dual to \(x_1,\ldots,x_n\). There are again no paths of length two in this quiver. Consequently, \(B^{!}\) has no nontrivial quadratic relations and is the path algebra of the oppositely oriented \(n\)-Kronecker quiver. Thus \[ B^{!}\cong B^{\mathrm{op}}, \] and, after interchanging the two vertices, \(B^{!}\) is isomorphic to \(B\). Hence Theorem~4.4 also gives \[ \mathsf{D}^{b}\!\bigl(\operatorname{qgr}(\Gamma)\bigr) \simeq \mathsf{D}^{b}\!\bigl(B^{!}\textup{-}\mathrm{mod}\bigr). \] Thus, in this example, both \(B\) and its Koszul dual \(B^{!}\) are \(n\)-Kronecker algebras, with opposite orientations, and both provide a finite-dimensional algebraic model for \(\mathsf{D}^{b}\!\bigl(\operatorname{qgr}(\Gamma)\bigr)\). 
\end{example}

% --------------------------------------------------

\begin{thebibliography}{99}

\bibitem{1}
M.~Artin and W.~Schelter,
\emph{Graded algebras of global dimension 3},
Adv. Math. \textbf{66} (1987), 171--216.

\bibitem{2}
M.~Artin and J.~J.~Zhang,
\emph{Noncommutative projective schemes},
Adv. Math. \textbf{109} (1994), 228--287.

\bibitem{3}
L.~L.~Avramov and D.~Eisenbud,
\emph{Regularity of modules over a Koszul algebra},
J. Algebra \textbf{153} (1992), 85--90.

\bibitem{4}
A.~A.~Beilinson,
\emph{Coherent sheaves on \(\mathbb{P}^{n}\) and problems of linear algebra},
Funct. Anal. Appl. \textbf{12} (1978), 214--216.

\bibitem{5}
A.~A.~Beilinson, V.~Ginzburg, and V.~V.~Schechtman,
\emph{Koszul duality},
J. Geom. Phys. \textbf{5} (1988), 317--350.

\bibitem{6}
A.~Beilinson, V.~Ginzburg, and W.~Soergel,
\emph{Koszul duality patterns in representation theory},
J. Amer. Math. Soc. \textbf{9} (1996), no.~2, 473--527.

\bibitem{7}
A.~Beligiannis,
\emph{The homological theory of contravariantly finite subcategories:
Auslander--Buchweitz contexts, Gorenstein categories and
(co-)stabilization},
Comm. Algebra \textbf{28} (2000), no.~10, 4547--4596.

\bibitem{8}
I.~N.~Bernstein, I.~M.~Gel'fand, and S.~I.~Gel'fand,
\emph{Algebraic bundles over \(\mathbb{P}^{n}\) and problems of linear algebra},
Funct. Anal. Appl. \textbf{12} (1978), 212--214.

\bibitem{9}
A.~I.~Bondal,
\emph{Representation of associative algebras and coherent sheaves},
Math. USSR-Izv. \textbf{34} (1990), no.~1, 23--42.

\bibitem{10}
A.~I.~Bondal and M.~M.~Kapranov,
\emph{Representable functors, Serre functors, and mutations},
Math. USSR-Izv. \textbf{35} (1990), no.~3, 519--541.

\bibitem{11}
A.~Bondal and M.~Van den Bergh,
\emph{Generators and representability of functors in commutative and
noncommutative geometry},
Mosc. Math. J. \textbf{3} (2003), no.~1, 1--36.

\bibitem{12}
A.~M.~Bouhada,
\emph{A non-graded Koszul duality and its applications},
preprint (2026), available at
\url{https://arxiv.org/abs/2604.16805}.

\bibitem{13}
A.~M.~Bouhada,
\emph{Koszul duality for finite-dimensional absolutely Koszul algebras},
in preparation.

\bibitem{14}
A.~M.~Bouhada,
\emph{Koszul duality for quadratic monomial algebras},
preprint (2026), arXiv:2604.20177, available at
\url{https://arxiv.org/abs/2604.20177}.

\bibitem{15}
A.~M.~Bouhada, M.~Huang, and S.~Liu,
\emph{Koszul duality for non-graded derived categories},
preprint (2019), available at
\url{https://arxiv.org/abs/1908.06153}.

\bibitem{16}
A.~M.~Bouhada, M.~Huang, Z.~Lin, and S.~Liu,
\emph{A representation theoretic perspective of Koszul theory},
preprint (2024), available at
\url{https://arxiv.org/abs/2411.15449}.

\bibitem{17}
R.~O.~Buchweitz,
\emph{Maximal Cohen--Macaulay Modules and Tate Cohomology over
Gorenstein Rings},
with appendices by L.~L.~Avramov, B.~Briggs, S.~B.~Iyengar, and
J.~C.~Letz,
Mathematical Surveys and Monographs, vol.~262,
American Mathematical Society, Providence, RI, 2021.

\bibitem{18}
X.-W.~Chen,
\emph{Relative singularity categories and Gorenstein-projective modules},
Math. Nachr. \textbf{284} (2011), no.~2--3, 199--212.

\bibitem{19}
A.~Conca, E.~De Negri, and M.~E.~Rossi,
\emph{Koszul algebras and regularity},
in \emph{Commutative Algebra: Expository Papers Dedicated to David
Eisenbud on the Occasion of His 65th Birthday},
I.~Peeva (ed.),
Springer, New York, 2013, pp.~285--315.

\bibitem{20}
A.~Conca, S.~B.~Iyengar, H.~D.~Nguyen, and T.~R\"omer,
\emph{Absolutely Koszul algebras and the Backelin--Roos property},
Acta Math. Vietnam. \textbf{40} (2015), no.~3, 353--374.

\bibitem{21}
D.~Eisenbud, G.~Fl{\o}ystad, and F.-O.~Schreyer,
\emph{Sheaf cohomology and free resolutions over exterior algebras},
Trans. Amer. Math. Soc. \textbf{355} (2003), 4397--4426.

\bibitem{22}
D.~Eisenbud and S.~Goto,
\emph{Linear free resolutions and minimal multiplicity},
J. Algebra \textbf{88} (1984), 89--133.

\bibitem{23}
D.~Eisenbud and C.~Huneke (eds.),
\emph{Free Resolutions in Commutative Algebra and Algebraic Geometry},
A K Peters, Wellesley, MA, 1992.

\bibitem{24}
D.~Eisenbud and F.-O.~Schreyer,
\emph{Resultants and Chow forms via exterior syzygies},
J. Amer. Math. Soc. \textbf{16} (2003), no.~3, 537--579.

\bibitem{25}
D.~Eisenbud and F.-O.~Schreyer,
\emph{Betti numbers of graded modules and cohomology of vector bundles},
J. Amer. Math. Soc. \textbf{22} (2009), no.~3, 859--888.

\bibitem{26}
W.~L.~Gan and L.~Li,
\emph{Koszulity of directed categories in representation stability theory},
J. Algebra \textbf{501} (2018), 88--129.

\bibitem{27}
S.~I.~Gel'fand and Yu.~I.~Manin,
\emph{Methods of Homological Algebra},
Springer-Verlag, Berlin, 1996.

\bibitem{28}
S.~B.~Iyengar and T.~R\"omer,
\emph{Linearity defects of modules over commutative rings},
J. Algebra \textbf{322} (2009), no.~9, 3212--3237.

\bibitem{29}
P.~J{\o}rgensen,
\emph{Linear free resolutions over non-commutative algebras},
Compos. Math. \textbf{140} (2004), no.~4, 1053--1058.

\bibitem{30}
P.~J{\o}rgensen,
\emph{A noncommutative BGG correspondence},
Pacific J. Math. \textbf{218} (2005), no.~2, 357--377.

\bibitem{31}
M.~M.~Kapranov,
\emph{On the derived category of coherent sheaves on Grassmann manifolds},
Math. USSR-Izv. \textbf{24} (1985), no.~1, 183--192.

\bibitem{32}
M.~M.~Kapranov,
\emph{On the derived categories of coherent sheaves on some homogeneous spaces},
Invent. Math. \textbf{92} (1988), no.~3, 479--508.

\bibitem{33}
M.~Kashiwara and P.~Schapira,
\emph{Categories and Sheaves},
Grundlehren der Mathematischen Wissenschaften, vol.~332,
Springer-Verlag, Berlin--Heidelberg, 2006.

\bibitem{34}
H.~Li and Q.~Wu,
\emph{Generalized Koszul algebra and Koszul duality},
J. Algebra \textbf{619} (2023), 643--685.

\bibitem{35}
R.~Mart\'inez-Villa,
\emph{Graded, selfinjective, and Koszul algebras},
J. Algebra \textbf{215} (1999), no.~1, 34--72.

\bibitem{36}
R.~Mart\'inez-Villa and O.~Solberg,
\emph{Artin--Schelter regular algebras and categories},
J. Pure Appl. Algebra \textbf{215} (2011), no.~4, 546--565.

\bibitem{37}
R.~Mart\'inez-Villa and M.~Saor\'in,
\emph{Koszul equivalences and dualities},
Pacific J. Math. \textbf{214} (2004), no.~2, 359--378.

\bibitem{38}
R.~Mart\'inez-Villa and D.~Zacharia,
\emph{Approximations with modules having linear resolutions},
J. Algebra \textbf{266} (2003), no.~2, 671--697.

\bibitem{39}
V.~Mazorchuk, S.~Ovsienko, and C.~Stroppel,
\emph{Quadratic duals, Koszul dual functors, and applications},
Trans. Amer. Math. Soc. \textbf{361} (2009), no.~3, 1129--1172.

\bibitem{40}
H.~Minamoto,
\emph{A noncommutative version of Beilinson's theorem},
J. Algebra \textbf{320} (2008), 238--252.

\bibitem{41}
J.~Miyachi,
\emph{Localization of triangulated categories and derived categories},
J. Algebra \textbf{141} (1991), no.~2, 463--483.

\bibitem{42}
D.~Orlov,
\emph{Triangulated categories of singularities and D-branes in
Landau--Ginzburg models},
Trudy Mat. Inst. Steklov. \textbf{246} (2004), 240--262.

\bibitem{43}
D.~Orlov,
\emph{Triangulated categories of singularities, and equivalences between
Landau--Ginzburg models},
Mat. Sb. \textbf{197} (2006), no.~12, 117--132.

\bibitem{44}
D.~Orlov,
\emph{Derived categories of coherent sheaves and triangulated categories
of singularities},
in \emph{Algebra, Arithmetic, and Geometry: In Honor of Yu.~I.~Manin},
Vol.~II,
Progr. Math., vol.~270,
Birkh\"auser Boston, Boston, MA, 2009, pp.~503--531.

\bibitem{45}
I.~Reiten and M.~Van den Bergh,
\emph{Noetherian hereditary abelian categories satisfying Serre duality},
J. Amer. Math. Soc. \textbf{15} (2002), no.~2, 295--366.

\bibitem{46}
D.~Rogalski,
\emph{An introduction to noncommutative projective geometry},
preprint (2014), available at
\url{https://arxiv.org/abs/1403.3065}.

\bibitem{47}
J.-P.~Serre,
\emph{Faisceaux alg\'ebriques coh\'erents},
Ann. of Math. (2) \textbf{61} (1955), no.~2, 197--278.

\bibitem{48}
J.-P.~Serre,
\emph{Fiber spaces},
in Russian,
IL, Moscow, 1957, pp.~372--453.

\bibitem{49}
I.~Shipman,
\emph{A geometric approach to Orlov's theorem},
Compos. Math. \textbf{148} (2012), no.~5, 1365--1389.

\bibitem{50}
C.~A.~Weibel,
\emph{An Introduction to Homological Algebra},
Cambridge Studies in Advanced Mathematics, vol.~38,
Cambridge University Press, Cambridge, 1994.

\bibitem{51}
K.~Yanagawa,
\emph{Castelnuovo--Mumford regularity for complexes and weakly Koszul
modules},
J. Pure Appl. Algebra \textbf{213} (2009), no.~3, 399--413.

\bibitem{52}
K.~Yanagawa,
\emph{Linearity defect and regularity over a Koszul algebra},
Math. Scand. \textbf{104} (2009), no.~2, 273--290.

\end{thebibliography}
\end{document}